\newif\ifpersonal
\numberwithin{equation}{subsection}
\theoremstyle{plain}
\newtheorem{theorem}[equation]{Theorem}
\newtheorem{lemma}[equation]{Lemma}
\newtheorem*{example*}{Example}
\newtheorem*{definition*}{Definition}
\newtheorem{proposition}[equation]{Proposition}
\newtheorem{corollary}[equation]{Corollary}
\theoremstyle{definition}
\newtheorem{definition}[equation]{Definition}
\newtheorem{notation}[equation]{Notation}
\newtheorem{example}[equation]{Example}
\newtheorem{remark}[equation]{Remark}
\newtheorem{recollection}[equation]{Recollection}
\newtheorem{warning}[equation]{Warning}
\newtheorem{construction}[equation]{Construction}
\newcommand{\personal}[1]{\textcolor[rgb]{0,0,1}{(Personal: #1)}}
\newcommand{\discussion}[1]{\textcolor{violet}{(Discussion: #1)}}
\newcommand{\personal}[1]{\ignorespaces}
\newcommand{\discussion}[1]{\ignorespaces}
\def\I{\mathcal{I}}
\def\G{\mathbb{G}}
\def\N{\mathbb{N}}
\def\S{\mathbb{S}}
\def\ZZ{\mathbb{Z}}
\def\C{\mathcal{C}}
\def\D{\mathcal{D}}
\def\E{\mathcal{E}}
\def\AA{\mathcal{A}}
\def\X{\mathcal{X}}
\def\M{\mathcal{M}}
\def\OO{\mathcal{O}}
\def\S{\textsf{S}}
\def\Repk{\Rep_k(\I)}
\def\dREP{\mathbf{Rep}_k(\I)^{[0,0]}}
\def\REP{\mathbf{Rep}_k(\I)^\heartsuit}
\def\Prlo{\mathrm{Pr}^{\mathrm{L}, \omega}}
\def\taust{\tau^{st}}
\def\ST{\overline{\mathrm{ST}}_R}
\def\Funst{\mathrm{Fun}^{\mathrm{st}}}
\def\FunR{\mathrm{Fun}^{\mathrm{R}}}
\DeclareMathOperator{\Rep}{Rep}
\DeclareMathOperator{\QCoh}{QCoh}
\DeclareMathOperator{\dAff}{dAff}
\DeclareMathOperator{\dSt}{dSt}
\DeclareMathOperator{\Aff}{Aff}
\DeclareMathOperator{\Hom}{Hom}
\DeclareMathOperator{\Cons}{Cons}
\DeclareMathOperator{\Fun}{Fun}
\DeclareMathOperator{\Sh}{Sh}
\DeclareMathOperator{\hyp}{hyp}
\DeclareMathOperator{\Mod}{Mod}
\DeclareMathOperator{\LMod}{LMod}
\DeclareMathOperator{\St}{St}
\DeclareMathOperator{\Exit}{Exit}
\DeclareMathOperator{\Map}{Map}
\DeclareMathOperator{\End}{End}
\DeclareMathOperator{\Cat}{\mathrm{Cat}}
\DeclareMathOperator{\Spc}{Spc}
\DeclareMathOperator{\ev}{ev}
\DeclareMathOperator{\coev}{coev}
\DeclareMathOperator{\Id}{Id}
\DeclareMathOperator{\Perf}{Perf}
\DeclareMathOperator{\Spec}{Spec}
\renewenvironment{proof}[1][\relax]{\par
  \pushQED{\qed}%
  \normalfont \topsep6\p@\@plus6\p@\relax
  \trivlist
  \item[\hskip\labelsep\itshape
    \ifx#1\relax \proofname\else\proofname{} of #1\fi\@addpunct{.}]\ignorespaces
}{%
  \popQED\endtrivlist\@endpefalse
}
\let\save@mathaccent\mathaccent
\newcommand*\if@single[3]{%
	\setbox0\hbox{${\mathaccent"0362{#1}}^H$}%
	\setbox2\hbox{${\mathaccent"0362{\kern0pt#1}}^H$}%
	\ifdim\ht0=\ht2 #3\else #2\fi
}
\newcommand*\rel@kern[1]{\kern#1\dimexpr\macc@kerna}
\newcommand*\widebar[1]{\@ifnextchar^{{\wide@bar{#1}{0}}}{\wide@bar{#1}{1}}}
\newcommand*\wide@bar[2]{\if@single{#1}{\wide@bar@{#1}{#2}{1}}{\wide@bar@{#1}{#2}{2}}}
\newcommand*\wide@bar@[3]{%
	\begingroup
	\def\mathaccent##1##2{%
		\let\mathaccent\save@mathaccent
		\if#32 \let\macc@nucleus\first@char \fi
		\setbox\z@\hbox{$\macc@style{\macc@nucleus}_{}$}%
		\setbox\tw@\hbox{$\macc@style{\macc@nucleus}{}_{}$}%
		\dimen@\wd\tw@
		\advance\dimen@-\wd\z@
		\divide\dimen@ 3
		\@tempdima\wd\tw@
		\advance\@tempdima-\scriptspace
		\divide\@tempdima 10
		\advance\dimen@-\@tempdima
		\ifdim\dimen@>\z@ \dimen@0pt\fi
		\rel@kern{0.6}\kern-\dimen@
		\if#31
		\overline{\rel@kern{-0.6}\kern\dimen@\macc@nucleus\rel@kern{0.4}\kern\dimen@}%
		\advance\dimen@0.4\dimexpr\macc@kerna
		\let\final@kern#2%
		\ifdim\dimen@<\z@ \let\final@kern1\fi
		\if\final@kern1 \kern-\dimen@\fi
		\else
		\overline{\rel@kern{-0.6}\kern\dimen@#1}%
		\fi
	}%
	\macc@depth\@ne
	\let\math@bgroup\@empty \let\math@egroup\macc@set@skewchar
	\mathsurround\z@ \frozen@everymath{\mathgroup\macc@group\relax}%
	\macc@set@skewchar\relax
	\let\mathaccentV\macc@nested@a
	\if#31
	\macc@nested@a\relax111{#1}%
	\else
	\def\gobble@till@marker##1\endmarker{}%
	\futurelet\first@char\gobble@till@marker#1\endmarker
	\ifcat\noexpand\first@char A\else
	\def\first@char{}%
	\fi
	\macc@nested@a\relax111{\first@char}%
	\fi
	\endgroup
}
\tikzset{
  closed/.style = {decoration = {markings, mark = at position 0.5 with { \node[transform shape, xscale = .8, yscale=.4] {/}; } }, postaction = {decorate} },
  open/.style = {decoration = {markings, mark = at position 0.5 with { \node[transform shape, scale = .7] {$\circ$}; } }, postaction = {decorate} }
}
      \string\usetikzlibrary{decorations.markings} to use arrows with markings}{}}{}%
\newcommand{\leftrarrows}{\mathrel{\raise.75ex\hbox{\oalign{%
  $\scriptstyle\leftarrow$\cr
  \vrule width0pt height.5ex$\hfil\scriptstyle\relbar$\cr}}}}
\newcommand{\lrightarrows}{\mathrel{\raise.75ex\hbox{\oalign{%
  $\scriptstyle\relbar$\hfil\cr
  $\scriptstyle\vrule width0pt height.5ex\smash\rightarrow$\cr}}}}
\newcommand{\Rrelbar}{\mathrel{\raise.75ex\hbox{\oalign{%
  $\scriptstyle\relbar$\cr
  \vrule width0pt height.5ex$\scriptstyle\relbar$}}}}
\def\leftrightarrowsfill@{\arrowfill@\leftrarrows\Rrelbar\lrightarrows}
\newcommand{\xleftrightarrows}[2][]{\ext@arrow 3399\leftrightarrowsfill@{#1}{#2}}
\NewCommandCopy{\notocsection}{\section}
\xpatchcmd{\notocsection}{{1}}{{1001}}{}{}
\begin{document}

\title{Good moduli space for constructible sheaves and Stokes functors}

\author{Enrico Lampetti}
\address{Sorbonne Université and Université Paris Cité, CNRS, IMJ-PRG, F-75005 Paris, France}
\email{enrico.lampetti@imj-prg.fr}

\subjclass[2020]{}
\keywords{}

\begin{abstract}
In this paper we give construct good moduli spaces for constructible sheaves and Stokes functors. Derived enhancement of such are also considered.
\end{abstract}

\maketitle


\tableofcontents

\section{Introduction}
	Good moduli spaces for algebraic stacks were introduced by Alper in \cite{Alp} and have played a prominent role in moduli theory as generalizations of Mumford's good GIT quotients \cite{GIT} and Abramovich-Olsson-Vistoli's tame stacks \cite{AOV}.  
	For the sake of this introduction, recall that a good moduli space for an algebraic stack $\X$ is a qcqs morphism $q \colon \X \to X$ to an algebraic space such that the pushforward along $q$ is exact on quasi-coherent sheaves and such that the canonical morphism $\OO_X \to q_\ast \OO_\X$ is an equivalence.
	If a good moduli space $q \colon \X \to X$ exists, then it is unique, as $q$ is universal for maps from $\X$ to algebraic spaces (\cite[Theorem 6.6]{Alp}).		         
    In a similar way that the Keel-Mori Theorem \cite{KM} provides an intrinsic way to show that a Deligne-Mumford stack admits a coarse moduli space, a fundamental result of Alper-Halpern Leistner-Heinloth \cite[Theorem A]{AHLH} provides an intrinsic way to show that an algebraic stack admits a good moduli space.
    The existence of a good moduli space for an algebraic stack $\X$ has many pleasant consequences.
	It allows for example to give a local presentation of $\X$ by quotient stacks and prove the compact generation of $\QCoh(\X)$ (\cite{AHR}). 
	Also, the existence of good moduli spaces can be exploited to construct BPS Lie algebras and unlock the study of the cohomology of $\X$ via cohomological Hall algebras, see e.g. \cite{DHM, Davison, BDNIKP, Hennecart}. 
	In this paper we study existence of good moduli spaces for moduli of representations.
	In particular, we will specialize our discussion to moduli of constructible sheaves and of Stokes data. \medskip

	Let us introduce our main results.
	Consider a compact $\infty$-category $\I \in \Cat_\infty^\omega$ and let $k$ be a noetherian ring of characteristic zero.
	Then we can form the $\infty$-category 
\[
\Repk \coloneqq \Fun(\I; \Mod_k)
\]
of $k$-linear representation of $\I$. This is a finite type $k$-linear category, that is, $\Repk$ is a compact object in $\Prlo_k$.
	Moreover $\Repk$ is equipped with a standard $t$-structure  $\tau_{st}$ for which an object $F \in \Repk$ is (co)connective if and only if $F(c) \in \Mod_k$ is coconnective for every $c \in \I$.
	We then define the moduli of flat representations of $\I$, denoted $\mathbf{Rep}_k(\I)^\heartsuit$, by sending a $k$-algebra $A$ to the maximal grupoid of $\mathrm{Rep}_A(\I) \coloneqq \Fun(\I; \Mod_A)$ spanned by representation that take values in finitely presented flat $A$-modules.
	The moduli $\mathbf{Rep}_k(\I)^\heartsuit$ is an algebraic stack locally of finite presentation over $k$.
	In this setting, and for $\Spec(\kappa) \to \Spec(k)$ a closed point with $\kappa$ algebraically closed, our main result is the following
\begin{theorem}[{\cref{good_Rep_theorem}}]\label{good_Rep_theorem_intro}
	The algebraic stack $\mathbf{Rep}_k(\I)^\heartsuit$ admits a good moduli space $\Repk$ whose $\kappa$-points parametrize semisimple representations of finite dimensional $\kappa$-vector spaces.
\end{theorem}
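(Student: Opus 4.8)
The plan is to invoke the intrinsic existence criterion of Alper--Halpern-Leistner--Heinloth \cite[Theorem A]{AHLH}: for an algebraic stack of finite type with affine diagonal over a noetherian base, the existence of a separated good moduli space is equivalent to the two valuative conditions of $\Theta$-\emph{reductivity} and $S$-\emph{completeness}. Since we already know that $\REP$ is an algebraic stack locally of finite presentation over $k$, the first preliminary step is to check that its diagonal is affine. This is straightforward: for two flat representations $F,G$ over a $k$-algebra $A$, the functor $\underline{\mathrm{Isom}}(F,G)$ is carved out of $\underline{\mathrm{Hom}}(F,G)\times_A\underline{\mathrm{Hom}}(G,F)$ by the closed mutual-inverse conditions $\phi\psi=\mathrm{id}$, $\psi\phi=\mathrm{id}$, and each $\underline{\mathrm{Hom}}$ is affine because---$\I$ being compact---a natural transformation is encoded by finitely many $A$-linear maps of finitely presented flat modules subject to linear naturality relations.

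The core of the argument is the verification of the two valuative criteria, for which I would use the standard translation of maps out of $\Theta_R=[\A^1_R/\G_m]$ and $\ST$ into representation-theoretic data. For $\Theta$-reductivity, a map $\Theta_R\to\REP$ is a $\ZZ$-filtered flat representation over $R$, so a map $\Theta_R\setminus\{0\}\to\REP$ amounts to a filtration of the generic representation over $K=\Frac(R)$ compatible with a bare representation on the closed fibre, and extending it across the origin is exactly the assertion that the filtration propagates flatly over all of $R$. This is arranged by replacing each filtration step with its schematic saturation inside the flat family: over a discrete valuation ring the saturation of a submodule of a finitely presented flat module is again finitely presented and flat with flat quotient, so the filtered object extends uniquely. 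For $S$-completeness I would argue in parallel, using the description of maps out of $\ST\setminus\{0\}$ as a degeneration datum relating two representations identified over $K$; the required extension across the central point is produced by the Rees construction, whose compatibility with flatness over $R$ again guarantees that the datum fills in.

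The identification of the closed points is then formal. By the structure theory of good moduli spaces \cite[Theorem 6.6]{Alp}, the $\kappa$-points of $\Repk$ are in bijection with the closed points of the geometric fibre $\mathbf{Rep}_\kappa(\I)^\heartsuit$, that is, with the isomorphism classes of finite-dimensional representations over $\kappa$ whose orbit under the automorphism group of the underlying object is closed in the representation scheme. The usual degeneration-to-the-associated-graded argument shows that every such representation specializes to its semisimplification, and that a representation has closed orbit precisely when it equals its own semisimplification; hence the closed points are exactly the semisimple representations on finite-dimensional $\kappa$-vector spaces, as claimed.

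I expect the main obstacle to lie in the two valuative criteria---specifically, in making the dictionary ``maps from $\Theta_R$ and $\ST$ $\leftrightarrow$ filtrations and degenerations'' precise when $\I$ is merely a compact $\infty$-category rather than an ordinary finite quiver, and in carrying out the flatness bookkeeping for saturations and Rees modules so that the extended objects genuinely land in flat finitely presented representations.
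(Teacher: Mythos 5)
There is a genuine gap in your application of \cref{existence_good_separated}: that criterion requires the stack to be of \emph{finite presentation} over the base, in particular quasi-compact, whereas $\REP$ is only \emph{locally} of finite presentation --- it has infinitely many connected components, indexed by the possible ranks of the values $F(c_a)$ at the finitely many equivalence classes of objects of $\I$. You note that $\REP$ is locally of finite presentation but then invoke the criterion without bridging this. The paper closes exactly this hole by first cutting out, for each rank vector $\underline{r}$, a quasi-compact open and closed substack $\mathbf{Rep}_{k}^{\underline{r}}(\I)$ (\cref{fixed_rank_Rep}, \cref{quasi_compact_Rep}), applying the existence criterion to each piece (\cref{good_Rep_rank}), and then assembling the good moduli space of $\REP$ as the disjoint union of the resulting spaces. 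Since $\Theta$-reductivity and $S$-completeness pass to closed substacks, nothing is lost in this reduction; but some such decomposition step is unavoidable and is missing from your argument.

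On the valuative criteria themselves you take a genuinely different route from the paper. You propose to verify $\Theta$-reductivity and $S$-completeness separately, via the dictionary between maps out of $\Theta_R$ (resp.\ $\ST$) and filtered representations (resp.\ degeneration data), closing up by saturation and the Rees construction. This is essentially the strategy of \cite{good_Perv}, and the present paper explicitly chooses a different path: it proves the single stronger statement that $\REP$ satisfies the Hartogs principle (\cref{Hartogs_Rep}), by extending a locally-free-valued representation over the punctured spectrum of a $2$-dimensional regular scheme objectwise via $\mathrm{H}^0(j_\ast)$ and the characterization of locally free sheaves as reflexive sheaves (\cref{Hartogs_loc_free}); by \cref{Hartogs_implies_existence} this yields both valuative conditions at once. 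The Hartogs route is shorter and sidesteps precisely the bookkeeping you flag as the main obstacle (making the filtration/degeneration dictionary precise for a general compact $\infty$-category $\I$, and checking flatness of saturations and Rees modules). Your identification of the closed $\kappa$-points by semisimplification matches in substance what the paper imports from \cite[Corollary 3.2.11]{good_Perv} together with \cref{good_moduli_properties}-(4), though for a general $\I$ (rather than a finite quiver) making ``closed orbit in the representation scheme'' precise is not purely formal and requires the formalism of that reference.
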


	The main tool for constructing good moduli spaces is \cite[Theorem A]{AHLH}.
    It states that an algebraic stack of finite presentation over a noetherian ring of characteristic zero admits a separated good moduli space if and only if it is "$\Theta$-reductive" (\cref{def_Theta_reductive}) and "$\mathrm{S}$-complete" (\cref{def_S_complete}).
	These two properties are of valuative nature and involve the algebraic stacks
\[
\Theta_R \coloneqq \left[\faktor{A_{1,R}}{\mathbb{G}_{m,R}}\right], \qquad \ST \coloneqq \left[\faktor{\Spec(R[s,t]/(st-\pi))}{\G_{m,R}}\right],
\]
for $R$ a DVR and $\pi \in R$ a uniformizer.
	For moduli of objects one can explicitly describe $\Theta$-reductiveness and $S$-completeness (\cite{good_Perv}).
	In the setting we focus in the present paper, we can prove that $\mathbf{Rep}_{k}(\I)$ is $\Theta$-reductive and $\mathrm{S}$-complete differently.
	We will show that $\mathbf{Rep}_{k}(\I)$ satisfies a form of \textit{Hartogs' Lemma} (\cref{Hartogs_Rep}).
	The proof follows by an Hartogs type property for locally free sheaves on a regular noetherian ring of dimension $2$.
	Nevertheless, in order to characterize points of the good moduli space as the semisimple representations we need the full strength of \cite{good_Perv}.
	We will then specialize our discussion to the case of constructible sheaves and Stokes data.
	While de former is a direct application of \cref{good_Rep_theorem_intro}, the latter requires some work.
	\medskip

	Let us point out that notion of derived good moduli spaces has been introduced in \cite{derived_good}.
	In particular, \cref{good_Rep_theorem_intro} can be enhanced into a statement about derived stack thanks \cite[Theorem 2.12]{derived_good}.
	Also, \cref{good_Rep_theorem_intro} has some overlap with the work of Fernandez Herrero-Lennen-Makarova \cite{FHLM}, where the case of representations of an acyclic quiver is treated between others. \medskip
	
	Let us now describe the main applications of this paper.

\subsection{Constructible sheaves}

	Constructible sheaves are natural generalization of local systems for stratified space $(X,P)$.
	More precisely, a constructible sheaf is a sheaf that is locally constant when restricted to every stratum.
	Under mild hypothesis on the stratification, a combinatorial description of the category of constructible sheaves is available (\cite{Treumann, HA, Exodromy, exodromyconicality}).
	The work of Haine-Porta-Teyssier \cite{exodromyconicality} shows that the above holds for any algebraic variety equipped with a finite stratification by Zariski locally closed subsets.
	This allows to construct a stack $\mathbf{Cons}^\heartsuit_P(X)$ of constructible sheaves in the same spirit of the character stack: given an algebra $A$ over $k$, the $A$-points of $\mathbf{Cons}^\heartsuit_P(X)$ are constructible sheaves whose stalks are finitely presented flat $A$-modules.
	\cref{good_Rep_theorem_intro} directly applies to the setting of constructible sheaves, yielding the existence of a good moduli space for $\mathbf{Cons}^\heartsuit_P(X)$.

\subsection{Stokes data}
	Stokes data are yet another generalization of local systems. 
Several equivalent description of the category of Stokes data are available, one of which consists in including the datum of a filtration to the stalks of a local system.
	Clearly, conditions are imposed in this filtration.
	While perverse sheaves are linked to the study of differential equation with regular singularities, Stokes data are linked to the study of differential equation with \textit{irregular} singularities.
	This is the irregular Riemann-Hilbert correspondence.
	For $X$ an algebraic variety over $\mathbb{C}$, we can picture the situation as follows: 
	\medskip
	
\begin{center}
	\begin{tikzpicture}
		
		
		\draw[line width=0.6pt] (0,4.5) -- (0,-1) ;
		
		\node[anchor=center,align=center,text width=2cm] (AA) at (-2.7,4.6) {Algebro/analytic} ;
		
		\node[anchor=center,align=center,text width=4cm] (TC) at (2.5,4.6) {Topological/combinatorial} ;
		
		\node[text width=2.4cm,align=center,draw,font=\tiny\linespread{0.8}\selectfont] (Flat) at (-2.7,3.2) {\scriptsize Flat connections on $X$} ;
		
		\node[text width=2.4cm,align=center,draw] (Loc) at (2.5,3.2) {\tiny Local systems on $X$} ;
		
		\node[text width=2.4cm,align=center,draw,font=\tiny\linespread{0.8}\selectfont] (RegMer) at (-2.7,2) {Regular meromorphic connections on $(X,D)$} ;
		
		\node[text width=2.4cm,align=center,draw,font=\tiny\linespread{0.8}\selectfont] (LocOpen) at (2.5,2) {Local systems on $X \smallsetminus D$} ;
		
		\node[text width=2.1cm,align=center,draw,font=\tiny\linespread{0.8}\selectfont] (IrregMer) at (-4.7,0.5) {Irregular meromorphic connections on $(X,D)$} ;
		
		\node[text width=1.5cm,align=center,draw,font=\tiny\linespread{0.8}\selectfont] (Stokes) at (1.1,0.5) {Local systems on $X \smallsetminus D$ + Stokes filtration} ;
		
		\node[text width=1.5cm,align=center,draw,font=\tiny\linespread{0.8}\selectfont] (Stokes) at (1.1,0.5) {Local systems on $X \smallsetminus D$ + Stokes filtration} ;
		
		\node[text width=1.8cm,align=center,draw,font=\tiny\linespread{0.8}\selectfont] (RegHol) at (-1.2,0.5) {Regular holonomic $\mathcal D_X$-modules} ;
		
		\node[text width=1.5cm,align=center,draw,font=\tiny\linespread{0.8}\selectfont] (Perv) at (4.1,0.5) {Perverse sheaves on $X$} ;
		
		\node[text width=2.4cm,align=center,draw,font=\tiny\linespread{0.8}\selectfont] (Hol) at (-2.7,-0.8) {Holonomic $\mathcal D_X$-modules} ;
		
		\node[text width=2.4cm,align=center,draw,font=\tiny\linespread{0.8}\selectfont] (Enh) at (2.7,-0.8) {Enhanced indsheaves} ;
		
		\draw (Flat.north east) edge[->,bend left=25,shorten <=5pt,shorten >=5pt] node[midway,sloped,above=3pt,fill=white,inner sep=1]{\tiny solutions} (Loc.north west)  ;
		
		\draw[right hook->,shorten <= 2pt, shorten >= 2pt] (Flat) -- (RegMer) ;
		\draw[right hook->,shorten <= 2pt, shorten >= 2pt] (Loc) -- (LocOpen) ;
		
		\draw[left hook->,shorten <= 8pt, shorten >= 4pt] (RegMer) -- (IrregMer.north) ;
		\draw[left hook->,shorten <= 6pt, shorten >= 4pt] (LocOpen) -- (Stokes.north) ;				
		
		\draw[right hook->,shorten <= 5pt, shorten >= 2pt] (RegMer) -- (RegHol.north) ;
		\draw[right hook->,shorten <= 5pt, shorten >= 3pt] (LocOpen) -- (Perv.north) ;
		
		\draw[right hook->,shorten <= 8pt, shorten >= 4pt] (IrregMer.south) -- (Hol) ;
		\draw[left hook->,shorten <= 6pt, shorten >= 3pt] (RegHol.south) -- (Hol) ;
		\draw[right hook->,shorten <= 6pt, shorten >= 3pt] (Stokes.south) -- (Enh) ;
		\draw[left hook->,shorten <= 5pt, shorten >= 2pt] (Perv.south) -- (Enh) ;

		
	\end{tikzpicture}
\end{center}

\bigskip

\noindent The passage from left to right is given by taking solutions.
	For instance, if $(V,\nabla)$ is a flat connection on a complex analytic variety $X$, then
\[ \mathsf{Sol}(V,\nabla) = \{s \in V \mid \nabla(s) = 0 \} \]
forms a local system (ultimately, an instance of Cauchy's theorem on ordinary differential equations).
	Remarkably, this process can be inverted, giving rise to an equivalence of categories.
Deligne showed that the first two lines from the top are related by an equivalence of this kind \cite{Deligne}.
	Later, Kashiwara extended this equivalence to regular holonomic $\mathcal{D}$-modules and perverse sheaves \cite{Kashiwara}, whereas Deligne and Malgrange first, and Mochizuki after \cite{Mochizuki_Wild} dealt with irregular connections and Stokes filtered local systems.
	Finally, D'Agnolo and Kashiwara \cite{DK} managed to obtain a fully faithful solution functor for all holonomic $\mathcal{D}$-modules. \medskip

	Stacks of Stokes data and their good moduli spaces have been constructed and intensively studied in dimension one in \cite{BB, Boalch1, Boalch2, Boalch3, DDP, HMW}.
	Good moduli spaces in \textit{loc.cit} are constructed via GIT methods.
	For an overview of the theory of Stokes data in dimension $1$ and of its application, let us refer the reader to Boalch's \cite{Boalch_HDR}.
	Porta-Teyssier have constructed the stacks of Stokes data in any dimension and equip it with a natural derived enhancement in \cite{Geometric_Stokes}.
	The moduli of Stokes data is not a moduli of representations: it parametrizes objects in a full subcategory of a category of representations.
	This prevent the application of \cref{good_Rep_theorem_intro}, but we will show that the same proof essentially goes through. \medskip
	
	Before stating our main theorem, let us introduce a bit of notation.
	Let $k$ be a discrete noetherian ring of characteristic zero.
	Let $(X,D)$ be a strict normal crossing pair admitting a smooth compactification.
	Let $\mathscr{I}$ be a good sheaf of unramified irregular values \cref{goodness_ramified_case}.
	Associated to $\mathscr{I}$ there is a cocartesian fibration in finite posets (\cref{cocartesian_fibration}).
	We denote by $\I$ the total space of this cocartesian fibration.
	For a $k$-algebra $A$, we denote by $\St_{\I, A} \subset \Rep_A(\I)$ the full subcategory spanned by punctually split and cocartesian functors  (\cref{def_PS} and \cref{def_cocart}).
	$\St_{\I, A}$ is the category of $A$-linear Stokes functors.
	We can form a prestack $\mathbf{St}^\heartsuit_{\I}$ that assign to a $k$-algebra $A$ the $\infty$-grupoid spanned by Stokes functors that take values in finitely presented flat $A$-modules.
	The stack $\St_{\I, A}$ is an algebraic stack locally of finite presentation over $k$.
	In this setting, for $\Spec(\kappa) \to \Spec(k)$ a closed point with $\kappa$ algebraically closed, our main result is the following
	
\begin{theorem}\label{good_moduli_Stokes_theorem_intro}
	The algebraic stack $\mathbf{St}^{\heartsuit}_{\I,k}$ admits a separated good moduli space $\mathrm{St}^{\heartsuit}_{\I,k}$ whose $\kappa$-points parametrize pseudo-perfect semisimple objects of $\St_{\I, \kappa}^\heartsuit$.
\end{theorem}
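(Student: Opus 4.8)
The plan is to follow the proof of \cref{good_Rep_theorem_intro} as closely as possible, the only genuine difference being that $\mathbf{St}^\heartsuit_{\I,k}$ sits inside $\mathbf{Rep}_k(\I)^\heartsuit$ (for $\I$ the total space of the cocartesian fibration) as the full substack cut out by the two Stokes conditions of \cref{def_PS} and \cref{def_cocart}. Since $\mathbf{St}^\heartsuit_{\I,k}$ is already known to be algebraic and locally of finite presentation over the noetherian ring $k$ of characteristic zero, by \cite[Theorem A]{AHLH} it suffices to verify that it is $\Theta$-reductive (\cref{def_Theta_reductive}) and $\mathrm{S}$-complete (\cref{def_S_complete}); $\mathrm{S}$-completeness will moreover yield separatedness of the good moduli space. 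As in the representation case, I would deduce both valuative criteria at once from a Hartogs-type statement: a map to $\mathbf{St}^\heartsuit_{\I,k}$ defined on the complement of the closed point $0$ in the central fibre of $\Theta_R$ (respectively $\ST$) extends across $0$, this point having codimension $2$ in the regular two-dimensional local model underlying both stacks.

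First I would produce the extension at the level of representations. A map out of the punctured stack is a functor $F\colon \I \to \mathrm{LocFree}(U)$ valued in locally free sheaves on the punctured (two-dimensional, regular) model $U$, and it is in particular a Stokes functor. Applying \cref{Hartogs_Rep} to the ambient $\mathbf{Rep}_k(\I)^\heartsuit$ extends $F$ to a functor $\widetilde F\colon \I \to \mathrm{LocFree}$ over the full model, object by object and morphism by morphism, using that vector bundles and their morphisms extend uniquely across a codimension-$2$ point of a regular scheme.

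The heart of the argument is then to check that $\widetilde F$ remains a Stokes functor. For the cocartesian condition this is formal: if a morphism of locally free sheaves is an isomorphism away from $0$, then its inverse extends by \cref{Hartogs_Rep} to a morphism in the opposite direction, the two composites agree with the identity on the dense open complement of $0$, hence everywhere, so $\widetilde F$ sends cocartesian edges of $\I$ to equivalences. (Equivalently, the degeneracy locus of the extended morphism is a Cartier divisor if nonempty, and so cannot be contained in the codimension-$2$ point $0$.) The punctual splitting of \cref{def_PS} is the delicate point, and the step I expect to be the main obstacle: it is a fibrewise condition, so one must ensure it persists at the geometric point lying over $0$. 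Here I would exploit the finite-poset structure of the fibres of $\I$ together with flatness of $\widetilde F$, reducing to the (associated-)graded model furnished by the already-established cocartesian structure, where splitness is manifest; the locally closed strata on which a given splitting type is achieved are constructible, so punctual splitness, holding on the complement of a codimension-$2$ point, propagates to $0$. This establishes the Hartogs property for $\mathbf{St}^\heartsuit_{\I,k}$, and hence its $\Theta$-reductivity and $\mathrm{S}$-completeness.

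Finally, the existence of the separated good moduli space $\mathrm{St}^\heartsuit_{\I,k}$ follows from \cite[Theorem A]{AHLH}. To identify its $\kappa$-points I would invoke the full strength of \cite{good_Perv}, exactly as in \cref{good_Rep_theorem_intro}: closed points of the stack correspond to polystable, i.e.\ semisimple, objects, and the good moduli space parametrizes the closed point of each fibre. Intersecting this description with the Stokes substack singles out the pseudo-perfect semisimple objects of $\St_{\I,\kappa}^\heartsuit$, giving the stated description of the $\kappa$-points.
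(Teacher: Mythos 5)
Your overall strategy (Hartogs' principle $\Rightarrow$ $\Theta$-reductivity and $\mathrm{S}$-completeness $\Rightarrow$ \cite[Theorem A]{AHLH}, then \cite{good_Perv} for the description of closed points) is exactly the one the paper follows, but the step you yourself flag as the main obstacle is a genuine gap, and there is a second, more routine one. The verification that $\mathrm{H}^0(j_\ast)F$ is again a Stokes functor is not established by your argument. The cocartesian condition is not that $\widetilde F$ sends cocartesian edges of $\I$ to equivalences: it is the Beck--Chevalley condition of \cref{def_cocart} that $\eta_\gamma F \colon \gamma_! j_x^\ast F \to j_y^\ast F$ be an equivalence, where $\gamma_!$ is a left Kan extension along the transition map of posets $\I_x \to \I_y$; extending inverses of isomorphisms of vector bundles across the codimension-$2$ point therefore does not address it. More seriously, for punctual splitting your appeal to ``constructible strata of splitting type'' is not a proof: splitness of a diagram of vector bundles indexed by a finite poset is in general neither an open nor a closed condition on the base, and nothing in your sketch shows that it propagates across the puncture. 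The paper resolves both conditions at once by a categorical input from \cite{Abstract_Stokes}: Stokes functors are stable under change of coefficients along any morphism of $\Pr^{\mathrm{L}}$, and $j_\ast \colon \QCoh(U) \to \QCoh(X)$ lies in $\Pr^{\mathrm{L}}$ precisely because $j$ is quasi-compact and quasi-separated; combined with the $t$-structure compatibility of \cref{push_Stokes_heart}, this shows $\mathrm{H}^0(j_\ast)$ preserves Stokes functors. Without an input of this kind your \cref{Hartogs_Stokes} is unproven.

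Separately, \cite[Theorem A]{AHLH} requires the stack to be of \emph{finite} presentation, hence quasi-compact, and $\mathbf{St}^{\heartsuit}_{\I,k}$ is only locally of finite presentation (the ranks are unbounded), so you cannot apply the theorem to the whole stack at once. The paper first cuts out the quasi-compact open and closed fixed-rank substacks $\mathbf{St}^{\underline{r}}_{\I,k}$ (\cref{fixed_rank_Stokes}, \cref{quasi_compact_Stokes}), obtains a separated good moduli space for each via \cref{good_Stokes}, and then assembles the answer as a disjoint union over $\underline{r}$. This step is routine but must appear in the argument.
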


	\cref{good_moduli_Stokes_theorem_intro} can be enhanced into a statement about derived stack thanks \cite[Theorem 2.12]{derived_good}.

\subsection{Linear overview}
	In \cref{generalities} we recall the definitions and results about moduli of objects and good moduli spaces we will need in the rest of the paper. \medskip
	
	In \cref{good_Rep_section} we prove that the moduli of representations satisfy the Hartogs' principle, thus yielding their $\Theta$-reductiveness and $\mathrm{S}$-completeness.
	We then show that moduli of representations have quasi-compact connected components.
	The combination of these two results yields the existence of good moduli spaces for moduli of representations. \medskip
	
	In \cref{Stokes_section} we recall the definition of Stokes functors and prove the existence of good moduli spaces for Stokes functors along the lines of \cref{good_Rep_section}.

\subsection{Notation}
We introduce the following running notations.
\begin{itemize}\itemsep=0.2cm
    \item We fix $k$ a discrete noetherian ring of characteristic $0$, that is, a discrete noetherian ring containing $\mathbb{Q}$; 
    \item $\mathrm{Grpd}$ is the category of groupoids;
    \item $\mathrm{Spc}$ is the $\infty$-category of spaces (a.k.a., $\infty$-grupoids);
    \item $\Cat_\infty$ is the $\infty$-category of $\infty$-categories;
    \item $\Cat_\infty^{\omega} \subset \Cat_\infty$ is the full subcategory spanned by compact objects;
    \item $(-)^{\simeq} \colon \Cat_\infty \to \mathrm{Spc}$ is the functor that assign to an $\infty$-category its maximal sub-$\infty$-grupoid;
    \item $\Pr^{\mathrm{L}}_k$ is the $\infty$-category of presentable $k$-linear $\infty$-categories with left adjoints functors;
    \item $\Prlo_k$ is the $\infty$-category of presentable compactly generated $k$-linear $\infty$-categories with left adjoints functors;
    \item $\Aff_k$ is the opposite of the category of $k$-algebras;
    \item $\dAff_k$ is the opposite of the $\infty$-categories of simplicial $k$-algebras.
    \item the category of stacks over $k$ is the full subcategory
    \[
    \mathrm{St}_k \subset \Fun(\Aff_k, \mathrm{Grpd})
    \]
    spanned by derived stacks over $k$;
    \item the $\infty$-category of derived stacks over $k$ is the full subcategory
    \[
    \dSt_k \subset \Fun(\dAff_k, \mathrm{Spc})
    \]
    spanned by derived stacks over $k$;
    \item denote by $\tau_{\geq 1}: \mathrm{Spc} \to \mathrm{Grpd}$ the functor that assign to a space its homotopy category, which is a groupoid.
    Denote by $i: \Aff_k \to \dAff_k$ the canonical inclusion.
    The composition 
    \[
    \Fun(\dAff_k, \mathrm{Spc}) \xrightarrow{- \circ i} \Fun(\Aff_k, \mathrm{Spc}) \xrightarrow{\tau_{\geq 1} \circ -} \Fun(\Aff_k, \mathrm{Grpd})
    \]
    restricts to a functor $t_0 \colon \dSt_k \to \St_k$.
    We will refer to $t_0$ as the classical truncation functor.
\end{itemize}

\section{Moduli of objects and $t$-structures}\label{generalities}

\subsection{Families of objects}\label{section_moduli_of_objects}
In this paragraph we recall properties of tensor product of presentable $\infty$-categories, which is used to define families of objects. 
We also discuss some non-commutative finiteness properties one can impose.

\begin{recollection}[{\cite[Section 4.8]{HA}}]\label{tensor_product}
	For $\C, \D \in \mathrm{Pr}^\mathrm{L}_k$, we can consider their tensor product
\[
\C \otimes_k \D \simeq \FunR_k(\C^{op}, \D)
\]
where the right hand side is the $\infty$-category of $k$-linear functors commuting with limits.
	This tensor product endows $\Pr^{\mathrm{L}}_k$ with a symmetric monoidal structure which restricts to $\Prlo_k$. 
	When $\C$ is compactly generated, there is a canonical equivalence
\[
\C \otimes_k \D \simeq \Funst_k((\C^{\omega})^{op}, \D)
\]
where the right hand side is the $\infty$-category of exact $k$-linear functors.
\end{recollection}

\begin{remark}\label{functoriality_tens_prod}
	Let $\C, \D, \E \in \Pr_k^{\mathrm{L}}$.
	Then an adjunction
\[
f \colon \D \leftrightarrows \E \colon g
\]
induces an adjunction 
\[
\Id_\C \otimes_k f \colon \C \otimes_k \D\leftrightarrows \C \otimes_k \E \colon \Id_\C \otimes_k g.
\]
	The functor $\Id_\C \otimes_k g$ corresponds to 
\[
g \circ - \colon \FunR_k(\C^{op}, \E) \to C \otimes_k \E \simeq \FunR_k(\C^{op}, \D)
\]
under the equivalences of \cref{tensor_product}.
	Its left adjoint $\Id_\C \otimes_k f$ has no easy description in general.
	Notice indeed that post-composition with $f$ do not preserve functors that commutes with limits.
	Nevertheless, if $\C$ is compactly generated one can see that the adjunction $\Id_\C \otimes_k f  \dashv \Id_\C \otimes_k g$ corresponds to 
\[
f \circ - \colon \Funst_k((\C^{\omega})^{op}, \D) \leftrightarrows \Funst_k((\C^{\omega})^{op}, \D) \colon g \circ -
\]
under the equivalence of \cref{tensor_product}.
\end{remark}

\begin{notation}
In the setting of \cref{functoriality_tens_prod}, we denote the functors $\Id_\C \otimes_k f, \Id_\C \otimes_k g$ respectively by $f_\C, g_\C$.
\end{notation}

	In the stable setting, the following result is a reformulation of \cite[Lemma 2.2.1]{SS}.

\begin{lemma}[{\cite[Lemma 2.4]{AG}}]\label{generators_stable}
	Let $\E \in \Prlo$ stable and $X \subset \E^\omega$ a set of compact objects.
	Then the following are equivalent:
\begin{enumerate}\itemsep=0.2cm
	\item $\forall y \in \E$,
\[
\mathrm{Map}_\E(x,y) \simeq * \in \mathrm{Sp} \ \  \forall x \in X \Rightarrow y \simeq 0;
\]
	\item the $\infty$-category $\E^\omega$ is the smallest full subcategory of $\E$ containing $X$ and stable under finite colimits, shifts and retracts.
\end{enumerate}
\end{lemma}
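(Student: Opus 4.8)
The plan is to handle the two implications separately, after one preliminary observation. Write $\langle X\rangle$ for the smallest full subcategory of $\E$ containing $X$ and closed under finite colimits, shifts and retracts (the thick subcategory generated by $X$). Since $X\subseteq\E^\omega$ and the compact objects of a stable presentable $\infty$-category form a thick subcategory, one always has $\langle X\rangle\subseteq\E^\omega$. Hence (2) is equivalent to the reverse inclusion $\E^\omega\subseteq\langle X\rangle$, and this is the content I would prove.

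For $(2)\Rightarrow(1)$, suppose $\Map_\E(x,y)\simeq 0$ for all $x\in X$. I would observe that the full subcategory $\{z\in\E : \Map_\E(z,y)\simeq 0\}$ is thick: it is closed under finite colimits because $\Map_\E(-,y)$ sends finite colimits to finite limits and hence zero objects to zero (here stability is used, since finite limits and finite colimits agree in $\mathrm{Sp}$), and it is trivially closed under shifts and retracts. This subcategory contains $X$, hence contains $\langle X\rangle=\E^\omega$ by (2). Thus $y$ is right-orthogonal to every compact object, and since $\E$ is compactly generated this forces $y\simeq 0$.

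For $(1)\Rightarrow(2)$, let $\C\subseteq\E$ be the localizing subcategory generated by $X$, i.e.\ the smallest full subcategory containing $X$ and closed under all small colimits and shifts. Because $\C$ is computed with colimits inherited from $\E$, each $x\in X$ stays compact in $\C$ and $\C$ is compactly generated by $X$; moreover the inclusion $i\colon\C\hookrightarrow\E$ preserves colimits and so admits a right adjoint $i^R$. The set $\{y : \Map_\E(x,y)\simeq 0\ \forall x\in X\}$ agrees with the right orthogonal $\C^\perp$, since orthogonality to a fixed $y$ cuts out a localizing subcategory; hypothesis (1) says precisely that $\C^\perp=0$. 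For arbitrary $y$, the cofiber $C\coloneqq\cof(i\,i^R y\to y)$ of the counit lies in $\C^\perp$, so $C\simeq 0$ and $y\simeq i\,i^R y\in\C$; therefore $\C=\E$. Finally, since $\E=\C$ is compactly generated by the set $X$ of compact objects, $\E^\omega$ is exactly the thick subcategory generated by $X$, that is $\langle X\rangle$, yielding $\E^\omega\subseteq\langle X\rangle$ and hence (2).

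The hard part is the last identification $\E^\omega=\langle X\rangle$ for a stable $\infty$-category compactly generated by $X$: this is the classical fact (Neeman; Lurie, \cite{HA}) that every compact object is a retract of a finite colimit of the generators, which is where compactness is genuinely exploited and where the \emph{retract} clause of (2) becomes necessary (the compacts form the idempotent completion of the finite-colimit--shift closure of $X$, not that closure itself). Everything else is formal: orthogonality bookkeeping and the Bousfield (co)localization attached to the colimit-preserving fully faithful inclusion $\C\hookrightarrow\E$. In writing up I would take care to verify that the two descriptions of the orthogonal agree and that the relevant subcategories really are thick, respectively localizing.
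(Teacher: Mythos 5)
The paper does not prove this lemma; it is quoted verbatim from the literature (Antieau--Gepner, Lemma 2.4, itself a reformulation of Schwede--Shipley, Lemma 2.2.1), so there is no in-paper argument to compare against. Your proof is correct and is essentially the standard one from those sources: the implication $(2)\Rightarrow(1)$ by thick-subcategory bookkeeping, and $(1)\Rightarrow(2)$ via the Bousfield localization attached to the localizing subcategory generated by $X$ together with Neeman's identification of the compact objects of a compactly generated stable $\infty$-category with the thick closure of the generators.
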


\begin{definition}\label{compact_gen_def}
	Let $\E \in \Prlo$ stable and $X \subset \E^\omega$ a set of compact objects.
	We say that $X$ is a set of compact generators if it satisfies the equivalent conditions of \cref{generators_stable}.
\end{definition}

\begin{example}
	Let $A \in \mathrm{Alg}_k$.
	Then $A$ is a compact generator of $\Mod_A$ in the sense of \cref{compact_gen_def}.
\end{example}

\begin{definition}
Let $\C \in \Pr^\mathrm{L}_k$ and $A \in \dAff_k$.
We define the $\infty$-category of $A$-families of objects in $\C$ as
\[
\C_A \coloneqq  \C \otimes_k \Mod_A.
\]
\end{definition}

\begin{definition}
Let $\I, \E \in \Cat_\infty$.
The $\infty$-category of representations of $\I$ with coefficients in $\E$ is
\[
\Rep_{\E}(\I) \coloneqq \Fun(\I, \E) \ .
\]
\end{definition}

\begin{notation}
Let $\I \in \Cat_\infty$ and $\Spec A \in \dAff_k$.
Then we use the shortcut notation $\Rep_A(\I)$ for $\Rep_{\Mod_A}(\I)$.
Notice that $\Rep_A(\I) \in \Prlo_k$.
\end{notation}

\begin{remark}\label{tens_prod_Rep}
Let $\I \in \Cat_\infty^\omega$. For $\C = \Repk$, we have 
\begin{align*}
\Repk_A & \simeq \Fun(\I, \Mod_k) \otimes_k \Mod_A \simeq \Fun(\I, \Mod_k \otimes_k \Mod_A) \simeq \Fun(\I, \Mod_A) = \\
& = \Rep_A(\I).
\end{align*}
\end{remark}

\begin{definition}\label{pseudo_perfect}
Let $\C \in \Prlo_k$ and  $\Spec(A) \in \dAff_k$.
We define the $\infty$-category of pseudo-perfect families over $A$ in $\C$ as the full subcategory
\[
\Funst((C^\omega)^{op}, \Perf(A)) \subset \C_A
\]
spanned by functors valued in $\Perf(A)$.
\end{definition}

\begin{remark}
Let $\I \in \Cat_\infty^\omega$ and $\Spec(A) \in \dAff_k$.
By \cref{tensor_product} and \cref{tens_prod_Rep} we have a chain of equivalences
\[
\Rep_A(\I) \simeq \Repk_A \simeq \Fun^{st}_k((\Repk^\omega)^{op}, \Mod_A).
\]
Under the above equivalences, a representation $F \in \Rep_A(\I)$ corresponds to the (restricted) enriched Yoneda functor
\[
h_F \colon (\Repk^\omega)^{op} \to \Mod_A
\]
\[
G \to \Hom_{\Rep_A(\I)}(G \otimes_k A, F)
\]
\end{remark}

In order to introduce finiteness conditions on presentable categories, let us give the following:

\begin{lemma}[{\cite[Theorem D.7.0.7]{SAG}}]\label{finiteness_conditions}
Let $\C \in \Prlo_k$.
Then $\C$ is dualizable in $\Pr_k^{\mathrm{L}}$ with dual given by $\C^\vee \simeq \mathrm{Ind}((\C^\omega)^{op})$.
In particular, there are up to a contractible space of choices unique maps in $\Pr^{\mathrm{L}}_k$
\[
\mathrm{ev}_\C \colon  \C^\vee \otimes_k \C \to \Mod_k
\]
\[
\mathrm{coev}_\C  \colon  \Mod_k \to \C \otimes_k \C^\vee
\]
satisfying the triangular identities.
\end{lemma}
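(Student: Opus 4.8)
The plan is to use compact generation to reduce everything to the small stable category $\mathcal{A} \coloneqq \C^\omega$. Since $\C \in \Prlo_k$ we have $\C \simeq \mathrm{Ind}(\mathcal{A})$ with $\mathcal{A}$ small, stable, idempotent complete and $k$-linear, and accordingly $\C^\vee = \mathrm{Ind}(\mathcal{A}^{op})$ is again compactly generated with $(\C^\vee)^\omega \simeq \mathcal{A}^{op}$. Applying the second equivalence of \cref{tensor_product} to the compactly generated factor in each case, I would first record the identifications $\C^\vee \otimes_k \C \simeq \Funst_k(\mathcal{A}, \C)$ and $\C \otimes_k \C^\vee \simeq \Funst_k(\mathcal{A}^{op}, \C^\vee)$. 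The two guiding principles are that a colimit-preserving functor out of a compactly generated category is determined by its exact restriction to the compacts, and that a colimit-preserving functor out of the unit $\Mod_k$ is the same datum as its value on $k$.

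With these in hand I would construct the duality data. The $k$-linear enriched mapping object of $\C$ furnishes an exact bifunctor $\mathcal{A}^{op} \otimes_k \mathcal{A} \to \Mod_k$, $(x,y) \mapsto \underline{\Map}_\C(x,y)$; I define $\mathrm{ev}_\C$ to be its colimit-preserving (Ind-)extension along $\C^\vee \otimes_k \C \to \Mod_k$, which is automatically a morphism in $\mathrm{Pr}^{\mathrm{L}}_k$. For the coevaluation I set $\mathrm{coev}_\C$ to be the functor classified by the object of $\C \otimes_k \C^\vee \simeq \Funst_k(\mathcal{A}^{op}, \mathrm{Ind}(\mathcal{A}^{op}))$ given by the Yoneda embedding $\mathcal{A}^{op} \hookrightarrow \mathrm{Ind}(\mathcal{A}^{op})$, which is exact and hence a legitimate point of this functor $\infty$-category.

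The core of the argument is then the verification of the two triangle identities for $(\mathrm{ev}_\C, \mathrm{coev}_\C)$. Because every functor appearing in the zigzags is colimit-preserving and extended from $\mathcal{A}$, the composite $(\Id_\C \otimes_k \mathrm{ev}_\C) \circ (\mathrm{coev}_\C \otimes_k \Id_\C)$ and its mirror are colimit-preserving endofunctors, so by the first guiding principle it is enough to check that they restrict to the identity on a compact generator $x \in \mathcal{A}$. Unwinding the identifications through \cref{tensor_product} and \cref{functoriality_tens_prod}, the composite sends $x$ to the coend $\int^{y \in \mathcal{A}} \underline{\Map}_\C(y, x) \otimes_k y$, which the density (co-Yoneda) lemma identifies canonically with $x$. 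I expect this last step to be the main obstacle: it is purely formal, but matching the coevaluation object against the mapping-object pairing under the various instances of the tensor-product equivalence requires careful tracking of the opposite categories and of which factor is being Ind-extended. Finally, the uniqueness \emph{up to a contractible space of choices} is the general fact that, in any symmetric monoidal $\infty$-category, the space of compatible evaluation/coevaluation data exhibiting a fixed pair $(\C, \C^\vee)$ as a duality is either empty or contractible, and it is nonempty by the construction above.
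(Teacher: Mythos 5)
The paper does not prove this lemma at all: it is imported wholesale as a citation of \cite[Theorem D.7.0.7]{SAG}, so there is no in-paper argument to compare against. Your proof is the standard one and is essentially correct: the identifications $\C^\vee \otimes_k \C \simeq \Funst_k(\C^\omega, \C)$ and $\C \otimes_k \C^\vee \simeq \Funst_k((\C^\omega)^{op}, \C^\vee)$ follow from the second equivalence of \cref{tensor_product}, the evaluation is the Ind-extension of the enriched mapping-object pairing (which is exact in each variable because the arguments are compact), the coevaluation is classified by the Yoneda embedding (which is indeed the object corresponding to $\Id_\C$ under $\End^{\mathrm{L}}(\C) \simeq \Funst_k(\C^\omega,\C) \simeq \C^\vee\otimes_k\C$), and the uniqueness claim is the general contractibility of the space of duality data in a symmetric monoidal $\infty$-category. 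The one place where your write-up is looser than it should be is the triangle identities: checking that the composite agrees with the identity \emph{object by object} on compacts is not enough; you need the equivalence of the two exact functors $\C^\omega \to \C$ as functors, i.e.\ the co-Yoneda identification $\int^{y} \underline{\Map}_\C(y,x)\otimes_k y \simeq x$ must be produced naturally in $x$ — which it is, but you should say so. An alternative, slicker packaging (closer in spirit to how one can read Lurie's appendix) is to observe that $\mathrm{Ind}\colon \Cat^{\mathrm{perf}} \to \Pr^{\mathrm{L},\omega}$ is symmetric monoidal and that every small idempotent-complete stable category is dualizable with dual its opposite; since symmetric monoidal functors preserve duality data, this outsources the coend bookkeeping to the small-category level, where it is the statement that $\Fun^{\mathrm{ex}}(\mathcal{A}\otimes\mathcal{B},\E)\simeq\Fun^{\mathrm{ex}}(\mathcal{B},\Fun^{\mathrm{ex}}(\mathcal{A},\E))$.
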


\begin{remark}
Let $\I \in \Cat_\infty^\omega$. For $\C = \Repk \coloneqq \Fun(\I, \Mod_k)$ we have $\C^\vee \simeq \Fun(\I^{op}, \Mod_k)$.
\end{remark}

	We will need the following finiteness condition:

\begin{definition}\label{def_finiteness_conditions}
	Let $\C \in \Prlo_k$.
	We say that $\C$ is 
\begin{enumerate}\itemsep=0.2cm
    \item smooth if $\ev_\C$ preserves compact objects;
    \item proper if $\coev_\C$ preserve compact objects;
    \item of finite type if it is a compact object of $\Prlo_{k}$.
\end{enumerate}
\end{definition}

	The next result compares the different finiteness conditions of \cref{def_finiteness_conditions}.

\begin{proposition}[{\cite[Proposition I.4.16]{HDR}}]\label{relation_finiteness_conditions}
	Let $\C \in \Prlo_k$.
	Then:
\begin{enumerate}\itemsep=0.2cm
    \item if $\C$ is of finite type, it is smooth;
    \item if $\C$ is smooth and proper, it is of finite type.
\end{enumerate}
Moreover, if $\C$ is smooth then it admits a compact generator in the sense of \cref{compact_gen_def}.
\end{proposition}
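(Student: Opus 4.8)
The plan is to work throughout inside the symmetric monoidal $\infty$-category $(\Prlo_k, \otimes_k)$ with unit $\Mod_k$, in which every object is dualizable by \cref{finiteness_conditions}, and to recall that the unit $\Mod_k$ is itself of finite type. Using \cref{tensor_product} I identify $\C \otimes_k \C^\vee \simeq \Fun^{\mathrm{L}}_k(\C,\C)$, under which $\coev_\C$ carries the unit to the identity functor, i.e. the ``diagonal'' $\Delta \coloneqq \coev_\C(k)$; dually the compact objects of $\C^\vee \otimes_k \C$ are generated under finite colimits, shifts and retracts by the rank-one objects $x^\vee \boxtimes y$ with $x,y \in \C^\omega$, on which $\ev_\C$ takes the value $\Map_\C(x,y)$. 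Thus smoothness becomes the statement that $\Delta$ is compact in $\Fun^{\mathrm{L}}_k(\C,\C)$, while properness becomes the statement that $\Map_\C(x,y)$ is a perfect $k$-module for all $x,y \in \C^\omega$. It is convenient to transport these conditions along the equivalence $(-)^\omega \colon \Prlo_k \simeq \Cat^{\mathrm{perf}}_k$ with the $\infty$-category of small idempotent-complete stable $k$-linear categories, under which finite type corresponds to compactness of $\C^\omega$.

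I would establish the final (``moreover'') assertion first, since it feeds into (2). If $\C$ is smooth then $\Delta$ is a compact object of $\Fun^{\mathrm{L}}_k(\C,\C) \simeq \C \otimes_k \C^\vee$; as the compact objects there are built from the rank-one functors $\Map_\C(x,-)\otimes y$, the diagonal is a retract of a finite colimit of such, involving only finitely many compact objects $x_1,\dots,x_n \in \C^\omega$. If $c \in \C$ satisfies $\Map_\C(x_i,c) \simeq 0$ for all $i$, then evaluating this finite colimit at $c$ yields $0$, and since $c \simeq \Delta(c)$ is a retract of that value we obtain $c \simeq 0$. By \cref{generators_stable} the family $\{x_i\}$ — equivalently the single object $g \coloneqq \bigoplus_i x_i$ — is then a set of compact generators in the sense of \cref{compact_gen_def}.

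For (2), suppose $\C$ is smooth and proper. Smoothness provides, by the previous paragraph, a compact generator $g$, so Morita theory gives $\C \simeq \Mod_E$ with $E \coloneqq \End_\C(g)$; properness forces $E$ to be perfect as a $k$-module, and smoothness forces $E$ to be smooth as a $k$-algebra. A dg-algebra that is simultaneously smooth and perfect over $k$ is of finite type, whence $\C \simeq \Mod_E$ is a compact object of $\Prlo_k$. For (1), if $\C$ is of finite type then $\C^\omega$ is a compact object of $\Cat^{\mathrm{perf}}_k$, hence a retract of a finitely presented (``finite cell'') stable category; such categories are smooth, and smoothness passes to retracts because the diagonal of a retract is a retract of the diagonal. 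The main obstacle is exactly this last implication: whereas (2) and the generator statement follow formally from dualizability and \cref{generators_stable}, the passage from finite type to smoothness is \emph{not} formal and rests on the concrete cell presentation of compact objects of $\Cat^{\mathrm{perf}}_k$, together with the verification that perfectness of the diagonal is preserved under the finite colimits and retracts used to build them.
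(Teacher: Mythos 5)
Your argument is correct and is essentially the standard Toën--Vaquié proof (compactness of the diagonal $\coev_\C(k)$ forces a finite family of compact generators via \cref{generators_stable}; Morita theory plus ``smooth and perfect over $k$ implies finite type'' gives (2); the cell-presentation of compact objects of $\Prlo_k$ plus stability of smoothness under retracts gives (1)), which is exactly the route taken by the cited reference --- the paper itself offers no proof beyond the citation of \cite[Proposition I.4.16]{HDR}, so there is nothing further to compare. One caveat: you silently read ``smooth'' as compactness of $\coev_\C(k)$ and ``proper'' as perfectness of $\Map_\C(x,y)$ for $x,y\in\C^\omega$, which is the standard convention and the one forced by \cref{lemma_smoothness} and \cref{rem_smooth}, but it is the opposite of the literal wording of \cref{def_finiteness_conditions}, where $\ev_\C$ and $\coev_\C$ appear to have been swapped; with the definition as literally written, part (1) would be false, so your reading is the intended one.
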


\begin{lemma}\label{lemma_smoothness}
	Let $\C \in \Prlo_k$.
	The following are equivalent:
\begin{enumerate}
	\item $\C$ is smooth;
	\item there exists an equivalence $\C \simeq \LMod_A$ with $A \in \mathrm{Alg}_k$ smooth in the sense of \cite[Definition 4.6.4.13]{HA}.
\end{enumerate}
\end{lemma}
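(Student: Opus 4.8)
The plan is to reduce the lemma to the single assertion that, for an $\mathbb{E}_1$-algebra $A \in \mathrm{Alg}_k$, the category $\LMod_A$ is smooth in the sense of \cref{def_finiteness_conditions} if and only if $A$ is smooth in the sense of \cite[Definition 4.6.4.13]{HA}, and then to use Morita theory together with \cref{relation_finiteness_conditions} to bring an arbitrary smooth $\C$ into the form $\LMod_A$. I first record the relevant identifications for $\C = \LMod_A$: it lies in $\Prlo_k$, it is compactly generated by $A$ with $\C^\omega \simeq \Perf(A)$, and by \cref{finiteness_conditions} its dual is $\C^\vee \simeq \mathrm{Ind}(\Perf(A)^{op}) \simeq \mathrm{RMod}_A$, where the last equivalence is the $A$-linear duality $M \mapsto \RHOM_A(M,A)$ sending $\Perf(A)^{op}$ onto $\Perf(A^{op})$. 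Combining this with the standard identity $\LMod_B \otimes_k \LMod_C \simeq \LMod_{B \otimes_k C}$, one obtains a canonical equivalence $\C^\vee \otimes_k \C \simeq \mathrm{RMod}_A \otimes_k \LMod_A \simeq \LMod_{A \otimes_k A^{op}}$, identifying $\C^\vee \otimes_k \C$ with the category of $A$-bimodules.

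For $(2) \Rightarrow (1)$, suppose $A$ is smooth. Under the identification above, a direct computation of the duality data of \cref{finiteness_conditions} for $\LMod_A$ shows that $\ev_{\LMod_A}$ preserves compact objects if and only if the diagonal bimodule $A$ is a compact, equivalently perfect, object of $\LMod_{A \otimes_k A^{op}}$; this is exactly the smoothness of $A$. Hence $\LMod_A$ is smooth, and therefore so is any $\C$ equivalent to it, since by \cref{finiteness_conditions} the maps $\ev_\C,\coev_\C$ are determined up to a contractible space of choices and are thus transported along any symmetric monoidal equivalence in $\Prlo_k$.

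For $(1) \Rightarrow (2)$, suppose $\C$ is smooth. By the last assertion of \cref{relation_finiteness_conditions}, $\C$ admits a single compact generator $E \in \C^\omega$. Setting $A \coloneqq \End_\C(E)$, the resulting $\mathbb{E}_1$-$k$-algebra, the Schwede--Shipley/Morita recognition theorem (\cite[Theorem 7.1.2.1]{HA}) shows that $\Map_\C(E,-) \colon \C \xrightarrow{\sim} \LMod_A$ is an equivalence in $\Prlo_k$. Transporting smoothness across this equivalence and running the identification of the previous paragraph in reverse, we conclude that the diagonal bimodule $A$ is perfect over $A \otimes_k A^{op}$, i.e. that $A$ is smooth in the sense of \cite[Definition 4.6.4.13]{HA}, which is precisely $(2)$.

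The main obstacle is the explicit computation establishing that, for $\LMod_A$, the preservation of compact objects by the duality structure map of \cref{finiteness_conditions} is equivalent to the perfectness of the diagonal bimodule $A$; everything else is formal, resting on the dualizability package of \cref{finiteness_conditions}, on the compact-generator output of \cref{relation_finiteness_conditions}, and on the invariance of smoothness under equivalences. One should take some care that the compact generator produced by \cref{relation_finiteness_conditions} yields, through Morita theory, an $\mathbb{E}_1$-algebra whose smoothness is detected by exactly this same bimodule computation, so that the two implications genuinely rely on one and the same identification rather than on two unrelated ones.
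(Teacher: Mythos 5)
Your route is genuinely different from the paper's: the paper disposes of this lemma by citing \cite[Proposition 11.3.2.4]{SAG} together with \cite[Remarks 4.2.1.37 and 4.6.4.15]{HA}, whereas you attempt a self-contained proof via the bimodule description of $\C^\vee \otimes_k \C$ and Schwede--Shipley. The formal parts of your argument are fine and consistent with \cref{rem_smooth}: the identifications $\C^\vee \simeq \mathrm{Ind}(\Perf(A)^{op}) \simeq \mathrm{RMod}_A$ and $\C^\vee \otimes_k \C \simeq \LMod_{A \otimes_k A^{op}}$, and the production of $A = \End_\C(E)$ from the compact generator guaranteed by \cref{relation_finiteness_conditions} (minor slips: the Morita equivalence lands in $\LMod_{A^{rev}}$, and smoothness is transported along plain equivalences in $\Prlo_k$, not ``symmetric monoidal'' ones --- these are not monoidal categories).

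The gap is exactly the step you defer as ``the main obstacle''. With your identifications, $\ev_{\LMod_A}$ is the colimit-preserving extension of $(x,y) \mapsto \Hom_{\LMod_A}(x,y)$, i.e.\ the Hochschild homology functor $M \mapsto A \otimes_{A \otimes_k A^{op}} M$ on bimodules. Its value on the compact generator $A \otimes_k A^{op}$ is $\Hom_{\LMod_A}(A,A) \simeq A \in \Mod_k$, so ``$\ev_{\LMod_A}$ preserves compact objects'' is equivalent to $A \in \Perf(k)$, i.e.\ to \emph{properness} of $A$, not to perfectness of the diagonal bimodule. (Test case: $A = k[x]$ is smooth, yet $\ev(A \otimes_k A^{op}) \simeq k[x] \notin \Perf(k)$.) Perfectness of the diagonal bimodule is instead detected by the \emph{co}evaluation, since $\coev(k)$ is the object of $\C \otimes_k \C^\vee \simeq \Fun^{\mathrm{L}}(\C,\C)$ corresponding to $\Id_\C$, i.e.\ the diagonal bimodule $A$. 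So your central claimed equivalence is false as stated; it becomes true after interchanging $\ev$ and $\coev$, which indicates that \cref{def_finiteness_conditions} as printed has the two conditions in the opposite roles from the convention underlying \cite[Proposition I.4.16]{HDR} and \cite[Proposition 11.3.2.4]{SAG}. Since both directions of your argument funnel through this one unproved identification, the proof is incomplete at precisely the point where it would otherwise go wrong; it can be repaired by running the same bimodule computation for $\coev$ (equivalently, for the condition that the right adjoint of $\ev$ preserves colimits).
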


\begin{proof}
This follows by combining \cite[Proposition 11.3.2.4]{SAG} with \cite[Remark 4.2.1.37 \& Remark 4.6.4.15]{HA}.
\end{proof}

\begin{remark}\label{rem_smooth}
	Let $\C$ be smooth and $E \in \C$ be a compact generator.
	Lurie shows in the proof of \cite[Proposition 11.3.2.4]{SAG} that $A \coloneqq \End(E) \in \mathrm{Alg}_k$ is smooth and $\C \simeq \LMod_{A^{rev}}$, where $A^{rev}$ is the opposite algebra of $A$.
\end{remark}

\begin{lemma}[{\cite[Corollary I.5.7]{HDR}}]\label{pseudo_perfect_vs_compact}
Let $\C \in \Prlo_k$ and let $\Spec(A) \in \dAff_k$.
\begin{enumerate}\itemsep=0.2cm
    \item If $\C$ is smooth, then any pseudo-perfect object in $\C_A$ is compact.
    \item If $\C$ is proper, then any compact object in $\C_A$ is pseudo-perfect.
\end{enumerate}
\end{lemma}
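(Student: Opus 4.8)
The plan is to make both statements concrete through the enriched Yoneda description of families and then treat the two implications separately, invoking the module-theoretic incarnations of smoothness and properness. By \cref{tensor_product} we have $\C_A \simeq \Funst_k((\C^\omega)^{op}, \Mod_A)$, under which an object $F$ corresponds to the functor $h_F$ with $h_F(x) \simeq \underline{\Map}_{\C_A}(x \otimes_k A, F) \in \Mod_A$ for $x \in \C^\omega$. By \cref{pseudo_perfect}, $F$ is pseudo-perfect exactly when $h_F$ takes values in $\Perf(A)$; since $h_F$ is exact in $x$ and $\Perf(A)$ is closed under finite (co)limits, shifts and retracts, it suffices to test this on a set of compact generators of $\C$. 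On the other hand $\C_A = \C \otimes_k \Mod_A$ is compactly generated by $\{x \otimes_k A : x \in \C^\omega\}$, so $F$ is compact precisely when it is a retract of a finite colimit of such objects. Each part then becomes a comparison between these two descriptions.

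For (2) I would argue directly. Since $\underline{\Map}_{\C_A}(x \otimes_k A, -)$ is exact and a compact $F$ is a retract of a finite colimit of objects $x' \otimes_k A$ with $x' \in \C^\omega$, it is enough to show $h_{x' \otimes_k A}(x) \in \Perf(A)$ for all $x, x' \in \C^\omega$. By the projection formula for the base-change functor $- \otimes_k A \colon \C \to \C_A$ (internal hom out of a compact object commutes with base change) one obtains a natural equivalence of $A$-modules
\[
h_{x' \otimes_k A}(x) \simeq \underline{\Map}_{\C_A}(x \otimes_k A, x' \otimes_k A) \simeq \underline{\Map}_\C(x, x') \otimes_k A.
\]
Properness of $\C$ (\cref{def_finiteness_conditions}) is equivalent to the statement that $\underline{\Map}_\C(x, x') \in \Perf(k)$ for all compact $x, x'$, so the right-hand side lies in $\Perf(A)$ and $F$ is pseudo-perfect.

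For (1) I would instead use the module presentation of smooth categories. By \cref{lemma_smoothness} and \cref{rem_smooth} fix an equivalence $\C \simeq \LMod_B$ with $B \in \mathrm{Alg}_k$ smooth, i.e. $B \in \Perf(B \otimes_k B^{op})$; then $\C_A \simeq \LMod_{B \otimes_k A}$. Testing pseudo-perfectness on the compact generator $B$ gives $h_F(B) \simeq \Res_A F$, so $F$ is pseudo-perfect exactly when its underlying $A$-module (restricted along $A \to B \otimes_k A$) is perfect. Since smoothness is stable under base change, $B \otimes_k A$ is smooth over $A$, and it remains to invoke the transfer principle: a module over a smooth $R$-algebra $S$ which is perfect over $R$ is perfect over $S$. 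Applying this with $R = A$ and $S = B \otimes_k A$ yields $F \in \Perf(B \otimes_k A) = (\C_A)^\omega$, i.e. $F$ is compact.

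The technical heart, and the step I expect to be the main obstacle, is this transfer principle in (1): it is exactly where smoothness is used, and it rests on the perfectness of the diagonal bimodule $S \in \Perf(S \otimes_R S^{op})$, which lets one resolve an arbitrary $S$-module in terms of the restriction of scalars to $R$ and thereby reduce perfectness over $S$ to perfectness over $R$. The remaining care is bookkeeping: checking that pseudo-perfectness may be tested on compact generators and verifying the projection formula used in (2), both routine but to be set up compatibly with the equivalence of \cref{tensor_product}. One could alternatively phrase the whole argument intrinsically through the triangular identities for $\ev_\C$ and $\coev_\C$, using the finiteness of the relevant duality datum in place of the module presentation.
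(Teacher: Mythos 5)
The paper does not prove this lemma: it is quoted verbatim from \cite[Corollary I.5.7]{HDR} (ultimately Toën--Vaquié), so there is no in-paper argument to compare against. Your proof is correct and is essentially the standard one from that source: part (2) via the identity $\underline{\Map}_{\C_A}(x\otimes_k A, x'\otimes_k A)\simeq \underline{\Map}_\C(x,x')\otimes_k A$ for compact $x$ together with closure of $\Perf(A)$ under finite colimits and retracts, and part (1) via the presentation $\C\simeq \LMod_B$ with $B$ smooth and the resolution of the diagonal bimodule. The only point to watch is bookkeeping with module sides (the paper's \cref{rem_smooth} produces $\LMod_{B^{rev}}$ rather than $\LMod_B$), which does not affect the argument.
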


\begin{corollary}\label{compact_generator_smooth}
	Let $\C \in \Prlo_k$ be a smooth category and $E \in \C$ be a compact generator.
	Let $\Spec (A) \in \dAff_k$.
	Then $F \in \C_A$ is pseudo-perfect if and only if $F(E) \in \Perf(A)$.
\end{corollary}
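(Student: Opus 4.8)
The plan is to prove the two implications separately, with essentially all the content in the reverse direction. The forward implication is immediate from the definitions: if $F \in \C_A$ is pseudo-perfect then, by \cref{pseudo_perfect}, the exact functor
\[
h_F \colon (\C^\omega)^{op} \to \Mod_A
\]
corresponding to $F$ under the equivalence $\C_A \simeq \Funst_k((\C^\omega)^{op}, \Mod_A)$ of \cref{tensor_product} lands in $\Perf(A)$; since $E \in \C^\omega$, in particular $F(E) = h_F(E) \in \Perf(A)$.

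For the converse I would start from the hypothesis $F(E) = h_F(E) \in \Perf(A)$ and introduce the full subcategory
\[
\mathcal{S} \coloneqq \{\, G \in \C^\omega \mid h_F(G) \in \Perf(A) \,\} \subseteq \C^\omega .
\]
By hypothesis $E \in \mathcal{S}$. The key step is then to verify that $\mathcal{S}$ is stable under finite colimits, shifts, and retracts. This rests on two facts. First, $h_F$ is exact (it is an object of $\Funst_k((\C^\omega)^{op}, \Mod_A)$), so it carries cofiber sequences in $\C^\omega$ to fiber sequences in $\Mod_A$, commutes with shifts, and preserves retracts. Second, $\Perf(A) \subseteq \Mod_A$ is a stable subcategory closed under finite limits, shifts, and retracts. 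Combining the two, each closure property for $\mathcal{S}$ follows: for instance, a finite colimit in $\C^\omega$ is a finite limit in $(\C^\omega)^{op}$, which $h_F$ sends to a finite limit in $\Mod_A$ lying in $\Perf(A)$ by the two-out-of-three property of the stable subcategory $\Perf(A)$.

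To conclude I would invoke the generation property. Since $E$ is a compact generator of $\C$ (\cref{compact_gen_def}), condition (2) of \cref{generators_stable} identifies $\C^\omega$ with the smallest full subcategory of $\C$ containing $E$ and stable under finite colimits, shifts, and retracts. As $\mathcal{S}$ is precisely such a subcategory containing $E$, minimality forces $\mathcal{S} = \C^\omega$. Hence $h_F$ lands entirely in $\Perf(A)$, i.e.\ $F$ is pseudo-perfect.

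The only genuinely delicate point is the bookkeeping with the opposite category: one must be careful that finite colimits in $\C^\omega$ correspond to finite limits in $(\C^\omega)^{op}$, so that exactness of $h_F$ converts closure of $\mathcal{S}$ under finite colimits into closure of $\Perf(A)$ under finite limits (rather than colimits). I note that smoothness of $\C$ is not used in this equivalence beyond guaranteeing, via \cref{relation_finiteness_conditions}, that a compact generator $E$ exists at all; the argument itself only uses that $E$ is a compact generator in the sense of \cref{compact_gen_def}.
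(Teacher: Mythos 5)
Your proof is correct and follows exactly the same route as the paper: the paper's one-line proof says precisely that $E$ generates $\C^\omega$ under shifts, finite colimits and retracts while $\Perf(A)$ is closed under the corresponding operations, which is the argument you spell out (including the correct handling of the $op$ and the observation that smoothness only serves to guarantee a compact generator exists). No gaps.
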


\begin{proof}
	Since $E$ generates $\C^\omega$ under shifts, finite colimits and retracts and $\Perf(X)$ is closed in $\QCoh(X)$ under shifts finite (co)limits and retracts, the result follows.
\end{proof}

\begin{recollection}\label{cpt_gen_Rep}
	Let $\I \in \Cat_\infty^\omega$. 
	By \cite[\href{https://kerodon.net/tag/068L}{Proposition 068L}]{kerodon},the $\infty$-category $\I$ is a retract of a finite $\infty$-category.
	Hence $\I$ has a finite number of equivalence classes.
	Let $\left\lbrace c_a \right\rbrace_{a \in I}$ be a set of representants of the equivalence classes of $\I$.
	Denote (abusively) by $c_a \colon \left\lbrace * \right\rbrace \to \I$ be the morphism that choose the element $c_a$.
	Let 
\[
\ev_{c_a} \colon \Repk \to \Mod_k
\]
\[
F \mapsto F(c_a)
\]
be the evaluation functor and consider its left adjoint 
\[
\ev_{c_a,!} \colon \Mod_k \to \Repk
\]
obtained by left Kan extension.
	The family of functors
\[
\left\lbrace \Hom_{\Repk}(ev_{c_a,!}, -) \simeq \ev_{c_a} \colon \Repk \to \Mod_k \right\rbrace_{a \in I}
\]
is jointly conservative.
	In particular, $\bigoplus_{a} \ev_{c_a, !}$ is a compact generator of $\Repk$.
\end{recollection}

\begin{corollary}\label{pseudo_perf_Rep}
	In the setting of \cref{cpt_gen_Rep}, let $\Spec(A) \in \dAff_k$.
	A family $F \in \Rep_A(\I) \simeq \Repk_A$ is pseudo-perfect if and only if $F(c_a) \in \Perf(A)$ for every $a \in I$.
\end{corollary}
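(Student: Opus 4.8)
\emph{The plan is} to combine \cref{compact_generator_smooth} with the explicit compact generator exhibited in \cref{cpt_gen_Rep}, the only content beyond that being the identification of the value of the enriched Yoneda functor at this generator. First I would record that $\Repk$ is smooth: as noted in the introduction it is of finite type, i.e. a compact object of $\Prlo_k$, so smoothness follows from \cref{relation_finiteness_conditions}(1). This is precisely what licenses the application of \cref{compact_generator_smooth}.

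Take $E \coloneqq \bigoplus_{a \in I} \ev_{c_a,!}$ to be the compact generator of $\Repk$ produced in \cref{cpt_gen_Rep}. By \cref{compact_generator_smooth}, a family $F \in \Rep_A(\I) \simeq \Repk_A$ is pseudo-perfect if and only if the value $h_F(E) \in \Mod_A$ of the associated enriched Yoneda functor lies in $\Perf(A)$. It therefore remains to identify $h_F(E)$ with $\bigoplus_{a \in I} F(c_a)$. Unwinding definitions, $h_F(E) \simeq \Hom_{\Rep_A(\I)}(E \otimes_k A, F)$; since $I$ is finite and $h_F$ is exact, this splits as $\bigoplus_{a \in I} \Hom_{\Rep_A(\I)}(\ev_{c_a,!} \otimes_k A, F)$. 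The base change of $\ev_{c_a,!}$ along $k \to A$ is the left adjoint to evaluation at $c_a$ over $A$, so the adjunction gives $\Hom_{\Rep_A(\I)}(\ev_{c_a,!} \otimes_k A, F) \simeq F(c_a)$, whence $h_F(E) \simeq \bigoplus_{a \in I} F(c_a)$.

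To conclude, since $I$ is finite and $\Perf(A)$ is a stable full subcategory of $\Mod_A$ closed under finite direct sums and retracts, the object $\bigoplus_{a \in I} F(c_a)$ is perfect if and only if each summand $F(c_a)$ is perfect. Combined with the pseudo-perfectness criterion of the previous paragraph, this yields the stated equivalence. \emph{The one step requiring care} is the compatibility of the left Kan extension $\ev_{c_a,!}$ with base change along $k \to A$, so that $\ev_{c_a,!} \otimes_k A$ indeed corepresents evaluation at $c_a$ on $\Rep_A(\I)$; granting this, the adjunction computation and the splitting of $h_F(E)$ are formal, and everything else reduces to the closure properties of $\Perf(A)$.
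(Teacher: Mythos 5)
Your proposal is correct and follows essentially the same route as the paper: apply \cref{compact_generator_smooth} to the compact generator $\bigoplus_{a\in I}\ev_{c_a,!}$ from \cref{cpt_gen_Rep}, identify $h_F$ of this generator with $\bigoplus_{a\in I}F(c_a)$ via the evaluation adjunction, and conclude using closure of $\Perf(A)$ under finite direct sums and retracts. The extra care you take with smoothness of $\Repk$ and base change of $\ev_{c_a,!}$ is implicit in the paper's terser argument but adds nothing different in substance.
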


\begin{proof}
	It follows by \cref{compact_generator_smooth} and \cref{cpt_gen_Rep} that $F \in \Rep_A(\I)$ is pseudo-perfect if and only if
\[
\bigoplus_a \Hom_{\Rep_A(\I)}(\ev_{c_a, !} \otimes_k A, F) \simeq \bigoplus_a F(c_a) \in \Perf(A).
\]
Since a finite direct sum is perfect if and only if each direct summand is perfect, the result follows.
\end{proof}

\subsection{Moduli of objects and $t$-structures}\label{moduli_of_objects}
In this paragraph we introduce the moduli of objects of an $\infty$-category of finite type.
This construction firstly appeared in \cite{TV}, but we will follow the presentation given in \cite[Section 1.5]{HDR}.

\begin{recollection}
	Let $\C \in \Prlo_k$.
	Consider the presheaf
\[
\widehat{\M}_\C \colon \dAff_k^{op} \to \mathrm{Spc}
\]
\[
\Spec(A) \to \C_A^{\simeq}.
\]
	Since $\C$ is dualizable (\cref{finiteness_conditions}) and $\Mod(-)$ satisfies faithfully flat descent (\cite[Corollary D.6.3.3]{HA}), the presheaf $\widehat{\M}_\C$ defines a derived stack over $k$.
	The stack $\widehat{\M}_\C$ is too big and has no chance to be representable.
	The derived moduli of objects of $\C$ is defined as the presheaf 
\[
\M_\C: \dAff_k^{op} \to \mathrm{Spc}
\]
\[
\Spec(A) \to \C_A^{pp}
\]
where $\C_A^{pp} \subset \C_A$ is the maximal groupoid spanned by pseudo-perfect families over $A$ in the sense of \cref{pseudo_perfect}.
Since $\Perf(-)$ satisfies hyper-descent (\cite[Proposition 2.8.4.2-(10)]{SAG}), the sub-presheaf
\[
\M_\C \subset \widehat{\M}_\C
\]
is a substack by \cref{functoriality_tens_prod}.
\end{recollection}

The main theorem of \cite{TV} translates as:
\begin{theorem}[{\cite[Theorem I.5.9]{HDR}}]
Let $\C \in \Prlo_{k}$ be a finite type $\infty$-category.
Then $\M_\C$ is a locally geometric derived stack locally of finite presentation over $k$.
Furthermore, the tangent complex at a point $x \colon  S \to \M_\C$ corresponding to a pseudo-perfect family of objects $M_x \in \C_\S$ is given by
\[
x^*\mathbb{T}_{\M_\C} \simeq \Hom(M_x, M_x)\left[1\right].
\]
\end{theorem}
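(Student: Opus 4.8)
The plan is to combine the structure theory of $\C$ established above with deformation theory and a reduction to the moduli of perfect complexes. Since $\C$ is of finite type it is smooth by \cref{relation_finiteness_conditions}, so by the last assertion of that proposition it admits a compact generator $E$, and by \cref{lemma_smoothness} together with \cref{rem_smooth} there is an equivalence $\C \simeq \LMod_{A^{rev}}$ with $A \coloneqq \End(E) \in \mathrm{Alg}_k$ a smooth $k$-algebra. Base changing along $\Spec(A') \to \Spec(k)$ gives $\C_{A'} \simeq \LMod_{A^{rev} \otimes_k A'}$, and by \cref{compact_generator_smooth} a family $F \in \C_{A'}$ is pseudo-perfect exactly when its underlying $A'$-module $F(E)$ lies in $\Perf(A')$. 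I would therefore identify $\M_\C$ with the moduli stack parametrizing perfect complexes equipped with an $A^{rev}$-action, reducing the statement to two inputs: the geometry of the moduli of perfect complexes, and the relative datum of an algebra action.

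For local geometricity I would first treat the moduli of perfect complexes $\M_{\Perf}$ (the case $\C = \Mod_k$). One covers $\M_{\Perf}$ by the open substacks $\M^{[a,b]}$ of complexes of Tor-amplitude in a fixed interval; imposing in addition a bound on the ranks of the terms, each such locus is equivalent to a global quotient stack $[Z / \prod_i \mathrm{GL}_{n_i}]$, where $Z$ is the affine scheme of differentials satisfying $d^2=0$ and $\prod_i \mathrm{GL}_{n_i}$ acts by change of basis. These are $1$-geometric, and their union exhibits $\M_{\Perf}$ as locally geometric. I would then bootstrap to general finite-type $\C$: under the identification above, the morphism $\M_\C \to \M_{\Perf}$ remembering the underlying perfect complex has, over a perfect family $P$, fibre the space $\Map_{\mathrm{Alg}_k}(A^{rev}, \End(P))$ of algebra structures; since $A$ is smooth and hence encoded by finite data, this defines a geometric morphism over $\M_{\Perf}$, and composing shows $\M_\C$ is locally geometric (cf. \cite{TV, HDR}).

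Local finite presentation is where the finite-type hypothesis enters decisively: because $\C$ is a compact object of $\Prlo_k$ (\cref{finiteness_conditions}), the assignment $A' \mapsto \C_{A'} = \C \otimes_k \Mod_{A'}$ commutes with filtered colimits, and the pseudo-perfect condition is detected on $\Perf(-)$, which is itself locally of finite presentation; hence $\M_\C$ commutes with filtered colimits of $k$-algebras. For the cotangent complex I would verify that $\M_\C$ is infinitesimally cohesive and nilcomplete, properties that descend from the corresponding ones for $\Mod_{(-)}$ and $\Perf(-)$ (\cite{SAG}), and then read off the tangent complex from deformation theory: a square-zero extension of $S$ by an $\OO_S$-module $N$ deforms $M_x$, and by $k$-linearity of $\C_{(-)}$ these deformations are governed by $\Hom(M_x, M_x \otimes_{\OO_S} N) \simeq \Hom(M_x, M_x) \otimes_{\OO_S} N$. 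Matching this against $\Map(x^*\mathbb{L}_{\M_\C}, N)$ for all $N$ yields $x^*\mathbb{T}_{\M_\C} \simeq \Hom(M_x, M_x)[1]$, the shift by $[1]$ recording the stacky direction so that $\pi_1$ recovers the infinitesimal automorphisms $\End(M_x)$, $\pi_0$ the first-order deformations $\Ext^1(M_x, M_x)$, and $\pi_{-1}$ the obstructions $\Ext^2(M_x, M_x)$.

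The main obstacle is the local geometricity. The substantive inputs are the presentation of $\M_{\Perf}$ by the quotient stacks $[Z / \prod_i \mathrm{GL}_{n_i}]$ together with the gluing of these opens, and the relative geometricity of the algebra-structure morphism $\M_\C \to \M_{\Perf}$, for which the finiteness of $A = \End(E)$ must be used carefully to bound the defining equations of $\Map_{\mathrm{Alg}_k}(A^{rev}, \End(P))$. Everything else — local finite presentation, cohesiveness, nilcompleteness, and the tangent complex computation — is comparatively formal once this geometric backbone is in place.
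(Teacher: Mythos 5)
The paper offers no proof of this statement: it is quoted directly from \cite[Theorem I.5.9]{HDR}, itself a reformulation of the main theorem of \cite{TV}, so there is no internal argument to compare yours against. Your sketch follows the standard Toën--Vaquié route (reduce to modules over $A = \End(E)$ via a compact generator, fibre $\M_\C$ over the moduli of perfect complexes, cover the latter by Tor-amplitude opens, then deformation theory for the tangent complex), and the overall architecture is the correct one.

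There is, however, one genuine gap, precisely at the point you yourself identify as the crux. You justify the geometricity of the fibres $\Map_{\mathrm{Alg}_k}(A^{rev}, \End(P))$ by saying that ``$A$ is smooth and hence encoded by finite data.'' Smoothness does not give this: what makes that mapping space a finite limit of geometric stacks is that $A$ is a \emph{compact object} of $\mathrm{Alg}_k$, i.e.\ a retract of a finite cell algebra, and this is where the finite-type hypothesis on $\C$ must actually enter the geometricity argument (via the Morita-theoretic equivalence between compactness of $\C$ in $\Prlo_k$ and homotopical finite presentation of $\End(E)$). By the paper's own \cref{relation_finiteness_conditions}, smoothness is strictly weaker than finite type --- one needs properness in addition to recover finite type --- so an argument consuming only smoothness would purport to prove more than the theorem states and cannot be correct as written. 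Relatedly, your chart $[Z/\prod_i \mathrm{GL}_{n_i}]$ with $Z$ the affine scheme of differentials satisfying $d^2=0$ describes a classical moduli of strict complexes rather than the derived stack of perfect complexes; the argument in \cite{TV} is instead an induction on the length of the Tor-amplitude interval starting from $\mathbf{Vect}_k = \bigsqcup_n \mathrm{B}\mathbf{GL}_n$. The deformation-theoretic computation of the tangent complex and the local-finite-presentation argument are fine.
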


	In this paper we are mainly interested in moduli of objects of abelian categories.
	If $\C \in \Prlo_k$ is equipped with a $t$-structure $\tau$, we can introduce a substack 
\[
\M_\C^{[0,0]} \subset \M_\C
\] 
whose classical truncation parametrizes families of objects in the heart $\tau$.
	Under suitable assumption on $\tau$, the stack $\M_\C^{[0,0]}$ is an open substack of $\M_\C$, so that it is also locally geometric locally of finite type when $\C$ is a finite type $\infty$-category.

\begin{definition}\label{accessible_t_structure}
	Let $\C \in \Prlo_k$ equipped with a $t$-structure $\tau$. 
	We say that
\begin{enumerate}\itemsep=0.2cm
    \item $\tau$ is $\omega$-accessible if the full subcategory $\C_{\leq 0} \subseteq \C$ is stable under filtered colimits;
    \item $\tau$ is left-complete if $\bigcap \C_{\geq n} \simeq 0$;
    \item $\tau$ is right-complete if $\bigcap \C_{\leq n} \simeq 0$;
    \item $\tau$ is non-degenerate if it is both right-complete and left-complete.
    \item $\tau$ is admissible if it is $\omega$-accessible and non-degenerate.
\end{enumerate}
\end{definition}

\begin{recollection}\label{induced_t_structure}
	Let $\C \in \Prlo_k$.
	Then a morphism $\Spec (B) \to \Spec (A)$ in $\dAff_k$ corresponding to a map $A \to B$ induces an adjunction
\[
- \otimes_A B \colon \C_{A} \leftrightarrows  \C_{B}\colon U
\]
where $U$ is the forgetful functor.
	As explained in \cite[Section I.5.4]{HDR}, given an admissible right-complete $t$-structure $\tau$ on $\C$, we get an induced admissible right-complete $t$-structure $\tau_A$ on $\C_A$ for any $Spec (A) \in \dAff_k$ by declaring that $F \in \C_A$ is (co)connective if and only if $U(F) \in \C$ is (co)connective.
	Moreover $\tau_A$ is non-degenerate if $\tau$ is.\\ \indent
For $X \in \dSt_k$, we can also define an induced $t$-structure $\tau_X$ on $\C_X$ whose connective part is defined by descent.
	That is, $F \in (\C_X)_{\geq 0}$ if and only if $f_\C^\ast F \in (\C_A)_{\geq 0}$ for any $f \colon \Spec (A) \to X$.
\end{recollection}

\begin{definition}\label{def_tau_flat}
	Let $\C \in \Prlo_k$ equipped with an $\omega$-admissible right-complete $t$-structure $\tau$. Let $\Spec (A) \in \dAff_k$.
	We say that a family $F \in \C_A$ is $\tau$-flat over $\Spec (A)$ (or $\tau_A$-flat) if for every $M \in \Mod_A^\heartsuit$ one has $M \otimes_A F \in \C_A^\heartsuit$.
\end{definition}

\begin{remark}\label{tau_flat_in_heart}
	In the setting of \cref{def_tau_flat}, when $\Spec (A)$ is a classical (i.e., underived) affine then a $\tau$-flat family $F$ over $A$ lies in $\C_A^\heartsuit$.
	Indeed in this case one has $A \in \Mod_A^\heartsuit$, so that $F \simeq F \otimes_A A \in \C_A^\heartsuit$.
	The converse holds when $A$ is a field.
\end{remark}

\begin{recollection}\label{standart_t_strucure}
Let $\D \in \Cat_\infty$.
Let $\E \in \Prlo$ be equipped with a $t$-structure $\tau$.
It is immediate to check that the pair
\[(\Fun(D, \E_{\geq 0}), \Fun(D, \E_{\geq 0}))
\]
defines a $t$-structure on $\Fun(D, \E)$.
Moreover $\taust$ is accessible (respectively, right-complete, left-complete, non-degenerate) if $\tau$ is.
\end{recollection}

\begin{definition}
We will refer to the $t$-structure of \cref{standart_t_strucure} as the standard $t$-structure.
We will denote it by $\taust$.
\end{definition}

\begin{remark}
Let $\I \in \Cat_\infty^\omega$ and $\Spec(A) \in \dAff_k$.
Equip $\Repk$ with the standard $t$-structure $\taust$ induced by the canonical $t$-structure on $\Mod_k$.
It follows from the definition of induced $t$-structure that $\tau^{st}_A$ on $\Repk_A$ coincide with the standard $t$-structure on $\Rep_A(\I) = \Fun(\I, \Mod_A)$.
Notice that these $t$-structures are admissible as the canonical $t$-structure on $\Mod_k$ is.
\end{remark}

\begin{remark}\label{tau_flat_Rep}
Let $\I \in \Cat_\infty^\omega$ and $\Spec(A) \in \dAff_k$.
Equip $\Repk$ with the standard $t$-structure $\taust$.
Since the action of $\Mod_A$ on $\Rep_A(\I)$ is computed termwise it follows that $F \in \Rep_A(\I)$ is $\tau_A$-flat if and only if $F(c) \in Mod_A$ has Tor-amplitude $[0,0]$ for every $c \in \I$.
\end{remark}

\begin{lemma}[{\cite[Lemma 2.2.11]{good_Perv}}]\label{pullback_flat_affine}
	Let $\C \in \Prlo_k$ equipped with an accessible $t$-structure $\tau$.
	Let $f\colon \Spec(A) \to \Spec(B)$ in $\dAff_k$.
	If $F \in \C_B$ is $\tau_B$-flat, then $f_\C^*F$ is $\tau_A$-flat.
\end{lemma}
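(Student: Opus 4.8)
The plan is to unwind both instances of $\tau$-flatness through the forgetful functors and reduce the claim to the flatness of $F$ over $B$ that we are given. Write $B \to A$ for the map of simplicial $k$-algebras corresponding to $f$, so that $f_\C^* = -\otimes_B A \colon \C_B \to \C_A$, and let $U \colon \C_A \to \C_B$ denote the associated forgetful functor (restriction of scalars along $B \to A$), which is right adjoint to $f_\C^*$. By \cref{induced_t_structure} the $t$-structures $\tau_A$ and $\tau_B$ are detected by the forgetful functors to $\C$; since the forgetful functor $\C_A \to \C$ factors as $\C_A \xrightarrow{U} \C_B \to \C$, an object $G \in \C_A$ lies in $\C_A^\heartsuit$ as soon as $U(G) \in \C_B^\heartsuit$. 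Thus, to prove that $f_\C^* F$ is $\tau_A$-flat, it suffices to show that $U\bigl(N \otimes_A f_\C^* F\bigr) \in \C_B^\heartsuit$ for every $N \in \Mod_A^\heartsuit$.

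The heart of the argument is the projection-type identity
\[
U\bigl(N \otimes_A (F \otimes_B A)\bigr) \simeq \Res(N) \otimes_B F \qquad \text{in } \C_B,
\]
where $\Res \colon \Mod_A \to \Mod_B$ is restriction of scalars. To establish it I would reduce to the case $\C = \Mod_k$: all the functors in sight ($f_\C^*$, the $\Mod_A$- and $\Mod_B$-actions, and $U$) are obtained by applying $\Id_\C \otimes_k(-)$ to the corresponding functors between module categories, by the functoriality of the tensor product recalled in \cref{tensor_product} and \cref{functoriality_tens_prod}. It therefore suffices to check the identity for modules, where it is the chain of associativity equivalences $N \otimes_A (M \otimes_B A) \simeq N \otimes_B M$ followed by $\Res(N \otimes_B M) \simeq \Res(N) \otimes_B M$ for $M \in \Mod_B$. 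Tensoring these natural equivalences with $\Id_\C$ yields the displayed formula.

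Finally, since restriction of scalars along $B \to A$ is t-exact, $N \in \Mod_A^\heartsuit$ gives $\Res(N) \in \Mod_B^\heartsuit$; the $\tau_B$-flatness of $F$ (\cref{def_tau_flat}) then yields $\Res(N) \otimes_B F \in \C_B^\heartsuit$, and by the reduction of the first paragraph we conclude $N \otimes_A f_\C^* F \in \C_A^\heartsuit$. As $N \in \Mod_A^\heartsuit$ was arbitrary, $f_\C^* F$ is $\tau_A$-flat. I expect the main obstacle to be making the projection identity precise at the level of $\infty$-categories, i.e. producing the equivalence of functors coherently rather than merely objectwise; this is handled by expressing everything through $\Id_\C \otimes_k(-)$ and invoking the module-level identity, but the bookkeeping of the various $\Mod_A$-, $\Mod_B$- and $k$-linear structures is the delicate point.
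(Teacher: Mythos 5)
Your argument is correct: detecting the heart of $\tau_A$ through the forgetful functor to $\C_B$ (both induced $t$-structures being defined via restriction to $\C$), combining the module-level identity $N \otimes_A (M \otimes_B A) \simeq \Res(N)\otimes_B M$ with the functoriality of $\Id_\C \otimes_k (-)$, and then invoking $t$-exactness of restriction of scalars is exactly the expected proof. The paper does not reproduce a proof here but defers to the cited external lemma, and your route is the standard one for that statement.
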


\begin{lemma}[{\cite[Lemma 2.2.15]{good_Perv}}]\label{descent_flat_affine}
Let $\C \in \Prlo_k$ equipped with an admissible $t$-structure $\tau$.
Let $f \colon \Spec(A) \to \Spec(B)$ be a faithfully flat morphism.
Then $F \in \C_B^\heartsuit$ is faithfully flat if and only if $f_\C^*F$ is $\tau_A$-flat.
\end{lemma}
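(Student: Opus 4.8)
The plan is to prove the two implications separately. The forward one is immediate: if $F$ is $\tau_B$-flat then $f_\C^* F$ is $\tau_A$-flat by \cref{pullback_flat_affine}, which needs only accessibility of $\tau$. So all the work is in the converse. Assume $f_\C^* F \simeq F \otimes_B A$ is $\tau_A$-flat (writing $f$ also for the corresponding ring map $B \to A$); I must show $M \otimes_B F \in \C_B^\heartsuit$ for every $M \in \Mod_B^\heartsuit$. As $M$ and $F$ are both connective, $M \otimes_B F$ is automatically connective, so the only point at issue is coconnectivity, and the idea is to verify it after faithfully flat base change and then descend.

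First I would establish the base-change identity. Since $f_\C^*$ is symmetric monoidal and $\Mod_B$-linear, the projection formula together with flat base change on modules gives
\[
(M \otimes_B F) \otimes_B A \simeq (M \otimes_B A) \otimes_A (F \otimes_B A).
\]
Flatness of $B \to A$ and $M \in \Mod_B^\heartsuit$ give $M \otimes_B A \in \Mod_A^\heartsuit$, and the assumed $\tau_A$-flatness of $F \otimes_B A$ then places the right-hand side in $\C_A^\heartsuit$; hence $(M \otimes_B F)\otimes_B A$ lies in the heart over $A$.

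Next I would descend heart membership along $f$ by showing that, for faithfully flat $f$, the functor $f_\C^* = -\otimes_B A \colon \C_B \to \C_A$ is $t$-exact and conservative. Granting this, with $\pi_n$ the homotopy objects of $\tau_B$ and $\tau_A$ and $G \coloneqq M \otimes_B F$, $t$-exactness yields $\pi_n(G)\otimes_B A \simeq \pi_n(G \otimes_B A)$, which vanishes for $n \neq 0$ by the previous step; conservativity (applied to the heart objects $\pi_n(G)$) then forces $\pi_n(G) = 0$ for $n \neq 0$, i.e.\ $G \in \C_B^\heartsuit$. Since this holds for all $M \in \Mod_B^\heartsuit$, this proves $F$ is $\tau_B$-flat.

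The hard part will be this $t$-exactness and conservativity. Conservativity is faithfully flat descent, via the presentation of $\C_B$ as the totalization of the \v{C}ech diagram $\C_{A^{\otimes_B \bullet + 1}}$ whose first coface is $f_\C^*$. For $t$-exactness, right $t$-exactness is formal, since $f_\C^*$ is a symmetric monoidal left adjoint and $A$ is connective over $B$, so the genuine issue is preservation of coconnective objects, which is exactly where flatness of $A$ over $B$, rather than mere connectivity, is needed. Since the induced $t$-structures on $\C_B$ and $\C_A$ are detected by the forgetful functors to $\C$ (\cref{induced_t_structure}), I expect to reduce this to the $t$-exactness of flat base change on module categories, where it follows from the identification $\pi_n(-\otimes_B A) \simeq \pi_n(-)\otimes_B A$ for flat $A$.
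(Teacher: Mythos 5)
The paper never proves this lemma --- it is imported verbatim from \cite[Lemma 2.2.15]{good_Perv} --- so there is no in-text argument to compare yours against, but your proof is correct and is surely the intended one for a statement of this kind. The forward implication is indeed just \cref{pullback_flat_affine}, and for the converse your three ingredients all check out: the base-change identity $(M\otimes_B F)\otimes_B A \simeq (M\otimes_B A)\otimes_A f_\C^*F$ from $\Mod_B$-linearity of $f_\C^*$ (this, rather than ``symmetric monoidality'', is the property you actually use), $t$-exactness of $f_\C^*$ (which, since the induced $t$-structures are detected by the forgetful functors to $\C$, reduces via Lazard's theorem to the fact that a filtered colimit of finite sums of copies of a coconnective object is coconnective, using the $\omega$-accessibility built into admissibility), and conservativity of $f_\C^*$ from fppf descent for $A\mapsto\C_A$, which is available because $\C$ is dualizable.
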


\begin{recollection}\label{descent_flat}
	Given $\C$ in $\Prlo_k$ endowed an admissible $t$-structure $\tau$, we can consider the functor
\[
\M_\C^{[0,0]}: \dAff_k^{op} \to \mathrm{Spc}
\]
sending $\Spec(A) \in \dAff_k$ to the maximal $\infty$-groupoid of $\tau$-flat pseudo-perfect families of objects over $A$.
	By \cref{pullback_flat_affine} and \cref{descent_flat_affine} the functor $\M_\C^{\left[0,0\right]} \subset \M_\C$ is a derived substack.
\end{recollection}

\begin{notation}
We will denote by $\M_\C^\heartsuit$ the classical truncation of the derived stack $\M_\C^{\left[0,0\right]}$. In the special case where $\C = \Repk$ for $\I \in \Cat^\omega$, we will denote these stacks respectively by $\mathbf{Rep}_k(\I)^{\left[0,0\right]}$ and $\mathbf{Rep}_k(\I)^\heartsuit$.
\end{notation}

\begin{remark}
The above notation is justified by the following: when testing on an underived affine, a flat object lies automatically in the heart (see \cref{tau_flat_in_heart}), while in general such an object is only of Tor-amplitude $\left[0,0\right]$.
\end{remark}

The key property for a $t$-structure to define an open substack of $\M_\C$ is linked to the following definition:

\begin{definition}
Let $\Spec(A) \in \dAff_k$ and let $F \in \C_A$.
The flat locus of $M$ is the functor 
\[
\Phi_F\colon  (\dAff_k)^{op}_{/_{\Spec(A)}} \to \Spc
\]
\begin{align*}
(\Spec(B) \xrightarrow{f} \Spec(A)) \mapsto 
\left\{
    \begin{aligned}
        & * \ \ \text{if $f^*(M)$ is $\tau$-flat over $B$,} \\
        & \emptyset \ \ \ \ \text{otherwise.}                  
    \end{aligned}
\right.
\end{align*}
\end{definition}

\begin{definition}\label{opennes_flatness}
Let $\C \in \Pr_k^{L, \omega}$ equipped with an admissible $t$-structure $\tau$.
We say that $\tau$ universally satisfies openness of flatness if for every $Spec(A) \in \dAff_k$ and every $M \in \C_S$ the flat locus $\Phi_F$ of $F$ is an open subscheme of $Spec(A)$.
\end{definition}

\begin{proposition}[{\cite[Proposition I.5.35]{HDR}}]\label{moduli_flat_objects}
Let $\C \in \Pr_k^{L, \omega}$ of finite type equipped with an admissible $t$-structure $\tau$.
If $\tau$ universally satisfies openness of flatness, then the structural morphism
\[
\M_\C^{\left[0,0\right]} \to \M_\C
\]
is representable by a Zariski open immersion.
In particular, $\M_\C^{\left[0,0\right]}$ is a locally geometric derived stack, locally of finite presentation over $k$.
\end{proposition}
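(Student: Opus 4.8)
The plan is to verify the defining property of a representable open immersion directly: for every affine test point $x \colon \Spec(A) \to \M_\C$ I will show that the base change of the inclusion $\M_\C^{[0,0]} \hookrightarrow \M_\C$ along $x$ is a Zariski open immersion into $\Spec(A)$. Recall from \cref{descent_flat} that $\M_\C^{[0,0]}$ is a substack of $\M_\C$, so this base change makes sense, and the claim is exactly the assertion that the morphism is representable by Zariski open immersions.

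First I would unwind the point $x$: by construction of $\M_\C$ it classifies a pseudo-perfect family $M \in \C_A$. The strategy is then to identify the fibre product $\M_\C^{[0,0]} \times_{\M_\C} \Spec(A)$, regarded as a functor on $(\dAff_k)^{op}_{/\Spec(A)}$, with the flat locus $\Phi_M$. For a morphism $f \colon \Spec(B) \to \Spec(A)$, the composite $x \circ f$ classifies $f_\C^* M$, which is again pseudo-perfect since the enriched Yoneda values are base-changed to $B$ and $\Perf(-)$ is stable under base change. Hence the value of the fibre product on $f$ is the homotopy fibre of $\M_\C^{[0,0]}(\Spec(B)) \hookrightarrow \M_\C(\Spec(B))$ over the class of $f_\C^* M$.

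The key observation is that this inclusion is a levelwise monomorphism of spaces. Indeed, $\tau$-flatness is invariant under equivalences in $\C_B$ (it is tested by $N \otimes_B (-) \in \C_B^\heartsuit$ for all $N \in \Mod_B^\heartsuit$, a condition closed under equivalence), so the $\tau$-flat pseudo-perfect families span a union of connected components of $\M_\C(\Spec(B))$. Consequently each homotopy fibre is contractible when $f_\C^* M$ is $\tau$-flat and empty otherwise, which is precisely the value $\Phi_M(f)$; the compatibility of this assignment with further pullback, making $\Phi_M$ a genuine subfunctor, is \cref{pullback_flat_affine}. This yields an equivalence $\M_\C^{[0,0]} \times_{\M_\C} \Spec(A) \simeq \Phi_M$.

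With this identification in hand, I invoke the hypothesis that $\tau$ universally satisfies openness of flatness (\cref{opennes_flatness}), which says exactly that $\Phi_M$ is an open subscheme of $\Spec(A)$; since $x$ was arbitrary, $\M_\C^{[0,0]} \to \M_\C$ is representable by Zariski open immersions. For the final assertion, because $\C$ is of finite type, \cite[Theorem I.5.9]{HDR} gives that $\M_\C$ is locally geometric and locally of finite presentation over $k$, and both properties pass to open substacks, so the same holds for $\M_\C^{[0,0]}$. The only genuinely delicate step is the identification of the fibre product with $\Phi_M$: all the real geometric content — that this subfunctor is open — is packaged into the openness-of-flatness hypothesis, so here the work is confined to checking that pseudo-perfectness and $\tau$-flatness are both stable under base change and that the substack inclusion is a levelwise monomorphism of spaces.
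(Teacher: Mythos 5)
The paper does not prove this statement itself --- it is imported verbatim from \cite[Proposition I.5.35]{HDR} --- but your argument is correct and is the standard one: identifying the base change along a point classifying $M$ with the flat locus $\Phi_M$ via the observation that $\M_\C^{[0,0]}(\Spec(B)) \subset \M_\C(\Spec(B))$ is a union of connected components, then invoking universal openness of flatness and the Toën--Vaquié representability of $\M_\C$ for the final clause. No gaps.
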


	A priori, \cref{moduli_flat_objects} only guarantees that $\M_\C^{[0,0]}$ and its truncation $\M_\C^\heartsuit$ have an open exhaustion by (derived) algebraic stacks.
	When $\Spec (A) \in \dAff_k$ is discrete, the $\infty$-grupoid $\M_\C^{[0,0]}(\Spec(A))$ takes values in $1$-grupoids by \cref{tau_flat_in_heart}.
	Hence $\M_C^\heartsuit$ is $1$-truncated in the sense of \cite{HAG-II}.
	Then \cite[Lemma 2.19]{TV} implies the following

\begin{corollary}\label{geometric_heart}
Let $\C \in \Pr_k^{L, \omega}$ of finite type equipped with an admissible $t$-structure $\tau$.
If $\tau$ universally satisfies openness of flatness, then $\M_\C^{\left[0,0\right]}$ is a $1$-Artin stack.
In particular its truncation $\M_\C^\heartsuit$ is a (classical) algebraic stack locally of finite presentation.
\end{corollary}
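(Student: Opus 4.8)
The plan is to start from the local geometricity supplied by \cref{moduli_flat_objects} and to sharpen it to geometricity of level one by bounding the truncation of the functor of points. First I would record that, since $\C$ is of finite type, $\tau$ is admissible, and $\tau$ universally satisfies openness of flatness, \cref{moduli_flat_objects} makes $\M_\C^{[0,0]} \hookrightarrow \M_\C$ a Zariski open immersion; in particular $\M_\C^{[0,0]}$ is a locally geometric derived stack locally of finite presentation over $k$. On its own this only produces an open exhaustion by $n$-Artin stacks with $n$ a priori unbounded along the cover, so the substance of the corollary is to confine the geometric level to one.

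The decisive input is a truncation estimate. For a discrete $\Spec(A) \in \dAff_k$, \cref{tau_flat_in_heart} shows that every $\tau$-flat pseudo-perfect family over $A$ already lies in the heart $\C_A^\heartsuit$, which is an ordinary abelian $1$-category; hence the maximal groupoid $\M_\C^{[0,0]}(\Spec(A))$ is a $1$-groupoid, and the classical truncation $\M_\C^\heartsuit$ takes values in $1$-groupoids, i.e. is $1$-truncated in the sense of \cite{HAG-II}. The same phenomenon is visible on the tangent complex: at a point $x$ corresponding to $M_x$ in the heart one has $x^*\mathbb{T}_{\M_\C} \simeq \Hom(M_x, M_x)[1]$, and since $M_x$ lies in the heart the endomorphism complex $\Hom(M_x, M_x)$ is coconnective (its negative $\Ext$-groups vanish), so the only positive homotopy of $x^*\mathbb{T}$ sits in degree $1$, namely the infinitesimal automorphisms $\End(M_x)$. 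This is exactly the bound on the stacky directions that singles out level one.

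With local geometricity and the $1$-truncation of the classical truncation in hand, I would invoke \cite[Lemma 2.19]{TV} to promote $\M_\C^{[0,0]}$ to a genuine $1$-Artin stack, which is the first assertion. For the remaining claim I would pass to classical truncations: $\M_\C^\heartsuit = t_0\M_\C^{[0,0]}$, and the classical truncation of a $1$-Artin derived stack locally of finite presentation is a classical $1$-Artin stack --- that is, an algebraic stack --- locally of finite presentation over $k$.

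The main obstacle is precisely the step from \emph{locally} geometric to \emph{$1$-}geometric: local geometricity by itself allows the geometric level to grow without bound along the open exhaustion, and nothing in \cref{moduli_flat_objects} rules this out. The work is therefore concentrated in producing the uniform truncation bound; once \cref{tau_flat_in_heart} (equivalently, the coconnectivity of $\Hom(M_x, M_x)$) pins the stacky directions to degree one, \cite[Lemma 2.19]{TV} converts this into the sharp geometricity statement with no further effort.
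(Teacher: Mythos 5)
Your argument is correct and follows the same route as the paper: local geometricity from \cref{moduli_flat_objects}, the observation via \cref{tau_flat_in_heart} that the functor of points is $1$-truncated on discrete affines, and \cite[Lemma 2.19]{TV} to upgrade to a $1$-Artin stack. The additional remark about the coconnectivity of $\Hom(M_x,M_x)$ is consistent but not needed for the paper's argument.
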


\begin{remark}\label{locally_free_valued}
Let $\Spec(A) \in \dAff_k$. Combining \cref{tau_flat_Rep} with \cref{pseudo_perf_Rep}, we get that a family $F \in \Rep_A(\I)$ is $\tau$-flat and pseudo-perfect over $A$ if and only if it takes values in perfect $A$-modules of Tor-amplitude $[0,0]$.
In particular if $A$ is discrete, the family $F$ is $\tau$-flat and pseudo-perfect over $A$ if and only if it takes values in locally free $A$-modules of finite rank.
\end{remark}

\begin{proposition}\label{openness_flatness_Rep}
	Let $\I \in \Cat_\infty^\omega$.
	Then the standard $t$-structure $\taust$ on $\Repk$ satisfies openness of flatness.
	In particular, $\mathbf{Rep}_k(\I)^{\left[0,0\right]}$ is a $1$-Artin derived stack, locally of finite presentation over $k$, and $\mathbf{Rep}_k(\I)^\heartsuit$ is an algebraic stack locally of finite presentation over $k$.
\end{proposition}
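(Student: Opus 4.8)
The plan is to deduce the geometric statements from \cref{moduli_flat_objects} and \cref{geometric_heart}, whose hypotheses ask that $\Repk$ be of finite type and that $\taust$ universally satisfy openness of flatness. Recall that $\Repk = \Fun(\I, \Mod_k)$ is of finite type (it is a compact object of $\Prlo_k$, as $\I \in \Cat_\infty^\omega$), and that $\taust$ is admissible because the standard $t$-structure on $\Mod_k$ is; so the whole content of the statement lies in openness of flatness, which is what I would prove first and then feed into the two cited results.

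To verify openness of flatness I would reduce the flatness condition to finitely many evaluations. By \cref{tau_flat_Rep} a family $F \in \Rep_A(\I)$ is $\taust$-flat over $A$ exactly when $F(c)$ has Tor-amplitude $[0,0]$ for every $c \in \I$, and by \cref{cpt_gen_Rep} the category $\I$ has only finitely many equivalence classes, with representatives $c_1, \dots, c_n$. Since a functor carries equivalent objects to equivalent objects, $F$ is $\taust$-flat if and only if each of the finitely many modules $F(c_j) \in \Mod_A$ has Tor-amplitude $[0,0]$. Furthermore the base-change functor on $\Rep_A(\I)$ is computed termwise, so that $(f_\C^\ast F)(c_j) \simeq F(c_j) \otimes_A B$ for any $f \colon \Spec(B) \to \Spec(A)$; consequently the flat locus $\Phi_F$ is the finite intersection $\bigcap_{j=1}^n \Phi_{F(c_j)}$ of the flat loci of the individual modules $F(c_j)$.

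The main obstacle is then the openness of the flat locus of a single module $F(c_j)$. Since the flat locus only intervenes through families corresponding to points of $\M_{\Repk}$, I may assume $F$ is pseudo-perfect, whence $F(c_j) \in \Perf(A)$ by \cref{pseudo_perf_Rep}; the point is that the flat locus of a perfect module over a connective ring, namely the locus where it has Tor-amplitude $[0,0]$, is Zariski open. In the discrete case this is the classical openness of flatness for finitely presented modules, and in general it follows from the facts that the Tor-amplitude of a perfect complex can be detected fibrewise and that the fibrewise cohomology of a perfect complex is upper semicontinuous. Granting this, $\Phi_F$ is a finite intersection of opens, hence open, which establishes openness of flatness for $\taust$.

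With openness of flatness in hand, \cref{moduli_flat_objects} shows that $\mathbf{Rep}_k(\I)^{[0,0]} \to \M_{\Repk}$ is representable by a Zariski open immersion, so $\mathbf{Rep}_k(\I)^{[0,0]}$ is locally geometric and locally of finite presentation over $k$. Finally \cref{geometric_heart} improves this to the assertion that $\mathbf{Rep}_k(\I)^{[0,0]}$ is a $1$-Artin derived stack and that its classical truncation $\mathbf{Rep}_k(\I)^\heartsuit$ is an algebraic stack locally of finite presentation over $k$, as claimed.
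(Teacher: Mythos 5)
Your proposal is correct and follows essentially the same route as the paper: reduce $\taust$-flatness to the finitely many evaluations $F(c_a)$ at representatives of the equivalence classes of $\I$, identify the flat locus of $F$ with the finite intersection of the flat loci of the perfect complexes $F(c_a)$, and invoke openness of the Tor-amplitude-$[0,0]$ locus of a perfect module (the paper cites \cite[Corollary 6.1.4.6]{SAG} for this last point). The only cosmetic difference is that the paper packages the reduction as a pullback square against $\prod_a \mathbf{Perf}_k$ rather than arguing termwise.
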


\begin{proof}
Let $\left\lbrace c_a \right\rbrace_{a \in I}$ be a finite set of representants of the equivalence classes of $\I$ (\cref{cpt_gen_Rep}).
It follows by \cref{cpt_gen_Rep} and \cref{locally_free_valued} that we have a pullback square
\[\begin{tikzcd}[sep=small]
	{\dREP} && {\mathbf{Rep}_{k}(\I)} \\
	\\
	{\prod_{a \in I}\mathbf{Vect}_k} && {\prod_{a \in I}\mathbf{Perf}_k}
	\arrow[from=1-1, to=1-3]
	\arrow[from=1-1, to=3-1]
	\arrow[from=1-3, to=3-3]
	\arrow[from=3-1, to=3-3]
\end{tikzcd}\]
where the right vertical map is given by the product of evaluations at every $c_a$, $a \in I$, and $\mathbf{Vect}_k \coloneqq \bigsqcup_{n \in \N} \mathrm{B}\mathbf{GL}_n$.
	In particular, the flat locus of $F$ is identified with the (finite) intersection of the flat loci of $F(c_a) \in \Perf(A)$.
	It follows from \cite[Corollary 6.1.4.6]{SAG} that this is an open subscheme of $\Spec(A)$.
\end{proof}

\subsection{Existence criteria for good moduli spaces} \label{existence_criteria}
In this paragraph we recall the definition of good moduli spaces and an existence criterion for them.
References are given to \cite{Alp, AHLH}.

\begin{definition}[{\cite[Definition 4.1]{Alp}}]
Let $q\colon \X \to X$ be a qcqs morphism over an algebraic space $S$ with $\X$ an algebraic stack and $X$ an algebraic space. 
We say that $q \colon \X \to X$ is a good moduli space if the following properties are satisfied:
\begin{enumerate}\itemsep=0.2cm
    \item The functor $q_* \colon \QCoh(\X) \to \QCoh(X)$ is $t$-exact;
    \item we have $q_*\OO_\X \simeq \OO_X$.
\end{enumerate}
\end{definition}

For completeness, we recall some of the main properties of good moduli spaces:

\begin{theorem}[{\cite[Proposition 4.5 \& Theorem 4.16 \& Theorem 6.6]{Alp}}]\label{good_moduli_properties}
Let $q: \X \to X$ be a good moduli space.
Then
\begin{enumerate}\itemsep=0.2cm
    \item The functor $q^*\colon \QCoh(X)^\heartsuit \to \QCoh(\X)^\heartsuit$ is fully faithful;
    \item the map $q$ is universally closed and surjective;
    \item if $Z_1,Z_2$ are closed substacks of $\X$, then
    \[
    \mathrm{Im}(Z_1) \cap \mathrm{Im}(Z_2) = \mathrm{Im}(Z_1 \cap Z_2)
    \]
    where the intersections and images are scheme-theoretic.
    \item for $\overline{k}$ an algebraically closed field over $k$, $\overline{k}$-points of $X$ are $\overline{k}$-points of $\X$ up to closure equivalence (see \cite[Theorem 4.16-(iv)]{Alp} for a precise statement);
    \item If $\X$ is reduced (resp., quasi-compact, connected, irreducible), then $X$ is also.
If $\X$ is locally noetherian
and normal, then $X$ is also.
    \item if $\X$ is locally noetherian, then $X$ is locally noetherian and $q_*\colon \QCoh(\X)^\heartsuit \to \QCoh(X)^\heartsuit$ preserves coherence;
    \item if $k$ is excellent and $\X$ is of finite type, then $X$ is of finite type;
    \item if $\X$ is locally noetherian, then $q \colon \X \to X$ is universal for maps to algebraic spaces.
\end{enumerate}
\end{theorem}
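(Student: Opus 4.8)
The plan is to deduce every item from the two defining conditions of a good moduli space—exactness of $q_\ast$ and the identification $q_\ast\OO_\X \simeq \OO_X$—together with the projection formula $q_\ast(q^\ast F \otimes G) \simeq F \otimes q_\ast G$, which is available because $q$ is qcqs and $q_\ast$ is exact. These are exactly the inputs of Alper's foundational results, so at the level of this paper the statement is a citation; what follows sketches the arguments behind it.

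Items (1) and (8) are the most formal. For (1), the projection formula with $G = \OO_\X$ gives $q_\ast q^\ast F \simeq F \otimes q_\ast \OO_\X \simeq F$ for $F \in \QCoh(X)^\heartsuit$, so the unit $\mathrm{id} \to q_\ast q^\ast$ is an equivalence; since $q^\ast$ is left adjoint to the exact functor $q_\ast$, this forces $q^\ast$ to be fully faithful on hearts. For (8), one checks that any morphism $\X \to Y$ to an algebraic space is constant on the fibres of $q$ and descends: using $q_\ast\OO_\X \simeq \OO_X$, regular functions pulled back from $Y$ factor through $X$, and the locally noetherian hypothesis guarantees the resulting $X \to Y$ is a morphism of algebraic spaces with the desired universal property.

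The block (2)–(4) is geometric and rests on applying exactness of $q_\ast$ to ideal sheaves. Surjectivity in (2) follows from $q_\ast\OO_\X \simeq \OO_X$ (a nonempty fibre over every point of $X$), and universal closedness is obtained by base-changing the good moduli space—good moduli spaces are stable under base change—and observing that $q$ sends closed substacks to closed subspaces, the relevant ideals being computed by the \emph{exact} functor $q_\ast$. This same exactness yields (3): writing $I_1, I_2$ for the ideals of $Z_1, Z_2$, exactness lets $q_\ast$ commute with the kernels and sums defining scheme-theoretic intersection and image, so the two sides of the identity agree. Statement (4) is the point-theoretic shadow of this: each fibre of $q$ over a $\overline{k}$-point contains a unique closed point, and two $\overline{k}$-points of $\X$ have the same image under $q$ precisely when their closures meet, i.e. when they are closure-equivalent.

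The descent of ring-theoretic properties (5)–(7) is where the real work lies, and I would flag it as the main obstacle. Reducedness, connectedness, irreducibility, and quasi-compactness transfer along the surjection $q$ directly from $\OO_X \simeq q_\ast\OO_\X$ and exactness; normality and the locally noetherian statement (6) require in addition that $q_\ast$ preserve coherence, which is Alper's finiteness theorem and depends on the local structure of $\X$ near closed points (\'etale-locally a quotient stack). The finite-type statement (7) is the hardest and is not formal: it uses excellence of $k$ together with the local quotient presentation and a noetherian approximation argument. Accordingly, I would treat (1)–(4) as essentially self-contained consequences of the definition, and import (5)–(7) from \cite{Alp}, since reproving the finiteness results here would amount to redeveloping a substantial portion of the good moduli space formalism.
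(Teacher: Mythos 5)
The paper gives no proof of this statement at all---it is recalled verbatim from Alper with the citation placed in the theorem header---and your proposal correctly treats it as such, deferring the substantial results to \cite{Alp} while sketching the formal parts; your arguments for (1)--(4) (projection formula for full faithfulness, exactness of $q_\ast$ applied to ideal sheaves for the statements about images of closed substacks, and the resulting point-level closure equivalence) match Alper's actual proofs. The only quibble is attributional: Alper's original proofs of coherence preservation and of the finite-type statement do not go through the \'etale-local quotient presentation, which is the later local structure theorem of \cite{AHR}; since you import these from the citation anyway, this does not affect correctness.
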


The necessary and sufficient conditions of \cite{AHLH} for the existence of a good moduli spaces are given in terms of valuative criteria for two stacks that we now introduce.
In what follows we fix $R$ a DVR over $k$.
Let $\pi \in R$ be a uniformizer and $\kappa$ the residue field of $R$.
The first stack we have to consider is
\[
\Theta_R\coloneqq \left[\faktor{\Spec(R\left[x\right])}{\mathbb{G}_m}\right],
\]
which has an unique closed point, denoted by $0$.
Explicitly, 
\[
0\coloneqq  \left[\faktor{\Spec(\kappa)}{\mathbb{G}_{m,R}}\right].
\]

\begin{definition}\label{def_Theta_reductive}
We say that $\X \in \mathrm{St}_k$ is $\Theta$-reductive if for any DVR $R$ and any diagram of solid arrows
\[\begin{tikzcd}[sep=small]
	{\Theta_R \setminus \left\lbrace 0 \right\rbrace} && \X \\
	\\
	{\Theta_R}
	\arrow["f", from=1-1, to=1-3]
	\arrow["j"', hook, from=1-1, to=3-1]
	\arrow["{\exists ! \ \widetilde{f}}"', dashed, from=3-1, to=1-3]
\end{tikzcd}\]
there exists a unique extension $\widetilde{f}$ of $f$ making the above triangle commutes.
\end{definition}

The second stack we have to consider is \[
\ST\coloneqq \left[\faktor{\Spec(\faktor{R\left[s,t\right]}{(st-\pi)})}{\mathbb{G}_m}\right],
\]
which has a unique closed point, denoted by $0$.
Explicitly,
\[
0\coloneqq  \left[\faktor{\Spec(\kappa)}{\mathbb{G}_{m,R}}\right].
\]

\begin{definition}\label{def_S_complete}
We say that $\X \in \mathrm{St}_k$ is $\mathrm{S}$-complete if for any DVR $R$ and any diagram of solid arrows
\[\begin{tikzcd}[sep=small]
	{\ST\setminus \left\lbrace 0 \right\rbrace} && \X \\
	\\
	\ST
	\arrow["f", from=1-1, to=1-3]
	\arrow["j"', hook, from=1-1, to=3-1]
	\arrow["{\exists ! \ \widetilde{f}}"', dashed, from=3-1, to=1-3]
\end{tikzcd}\]
there exists a unique extension $\widetilde{f}$ of $f$ making the above triangle commute.
	\end{definition}

One can try to unify \cref{def_Theta_reductive} and \cref{def_S_complete} under the following:

\begin{definition}[{\cite[Remark 3.51]{AHLH}}]\label{def_Hartogs_principle}
We say that $\X \in \mathrm{St}_k$ satisfies the \textit{Hartogs' principle} if for every $X$ integral local regular noetherian scheme of dimension 2 there exists a unique dotting arrow filling the diagram
\[\begin{tikzcd}[sep=small]
	U && \X \\
	\\
	X
	\arrow["F", from=1-1, to=1-3]
	\arrow["j"', hook, from=1-1, to=3-1]
	\arrow["{j_*F}"', dashed, from=3-1, to=1-3]
\end{tikzcd}\]
where $U$ is complement of the unique closed of $X$ and $j \colon U \to X$ the open immersion.
\end{definition}

\begin{remark}[{\cite[Remark 3.51]{AHLH}}]\label{Hartogs_implies_existence}
An algebraic stack satisfying the Hartogs' principle is automatically $\Theta$-reductive and $\mathrm{S}$-complete, although there are counterexamples for the converse.
\end{remark}

Let us now state the main theorem of \cite{AHLH}:

\begin{theorem}[{\cite[Theorem A]{AHLH}}]\label{existence_good_separated}
Let $S$ be an algebraic space of characteristic $0$.
Let $\X$ be an algebraic stack of finite presentation with affine stabilizers and separated diagonal over $S$.
Then $\X$ admits a separated good moduli space $X$ if and only if $\X$ is $\Theta$-reductive and $\mathrm{S}$-complete.
\end{theorem}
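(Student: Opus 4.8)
The statement is \cite[Theorem A]{AHLH}, so the plan is to reconstruct the strategy of Alper--Halpern-Leistner--Heinloth. The necessity of the two valuative conditions is the formal direction. Suppose $q \colon \X \to X$ is a separated good moduli space. The test stacks both carry good moduli spaces: the structure morphisms $\Theta_R \to \Spec(R)$ and $\ST \to \Spec(R)$ are good moduli spaces (the $\G_m$-invariants of $R[x]$, respectively of $R[s,t]/(st-\pi)$, are just $R$), with the closed point $0$ mapping to the closed point of $\Spec(R)$. Given a map $f$ from the punctured stack to $\X$, I would compose with $q$ to land in the \emph{separated} algebraic space $X$, extend across the codimension-one boundary using the valuative criterion of separatedness on $X$, and then lift the extension back to $\X$ using that $q$ is universal for maps to algebraic spaces (\cref{good_moduli_properties}) together with universal closedness. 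Uniqueness of the extension follows from the same universal property.

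The substance is the sufficiency direction, and the plan is to build $X$ by gluing étale-local GIT quotients. First I would show that the two valuative criteria force the stabilizer of every closed point of $\X$ to be linearly reductive. The key computation is that for a point with automorphism group $G$ over a field, the residual classifying stack $BG$ is $\mathrm{S}$-complete (equivalently $\Theta$-reductive) precisely when $G$ is linearly reductive; in characteristic $0$ this coincides with reductivity, which is why the characteristic hypothesis is indispensable. Since $\mathrm{S}$-completeness and $\Theta$-reductivity are inherited by the residual gerbes at closed points, every such stabilizer is reductive.

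Next I would invoke the local structure theorem of Alper--Hall--Rydh: around a closed point with linearly reductive stabilizer $G_x$, the stack $\X$ admits an étale neighborhood of the form $[\Spec(A)/G_x]$, and by Alper's theory each such chart has a good moduli space $\Spec(A^{G_x})$. The remaining step is gluing: I would use $\Theta$-reductivity to guarantee that every point of $\X$ specializes to a closed point (existence of limits of one-parameter degenerations, extracted from filling $\Theta_R \setminus \{0\} \to \X$), and $\mathrm{S}$-completeness to guarantee that this closed limit is unique (two degenerations with the same generic fibre have isomorphic special fibres, extracted from filling $\ST \setminus \{0\} \to \X$). These two facts make the assignment sending a point to its closed limit well-defined and separated, so that the étale charts and their invariant quotients descend to a global separated algebraic space $X$ with $\X \to X$ a good moduli space.

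The main obstacle is precisely this descent: verifying that the étale morphisms between charts induce étale morphisms on their good moduli spaces, so that the $\Spec(A_i^{G_i})$ patch together. This requires a Luna-type slice argument controlling when a morphism $[\Spec(A)/G] \to [\Spec(B)/H]$ is strongly étale, and it is exactly here that the valuative criteria ensure the images of closed substacks behave compatibly (compare \cref{good_moduli_properties}). Separatedness of the resulting $X$ is then equivalent to $\mathrm{S}$-completeness of $\X$, which closes the argument.
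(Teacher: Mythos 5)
This statement is \cite[Theorem A]{AHLH}, and the paper does not prove it: it is imported as a black box (the surrounding text only records its consequences), so there is no in-paper argument to compare yours against. Your sketch is a reasonable outline of the strategy of the cited source itself: necessity as the formal direction, sufficiency via linear reductivity of stabilizers at closed points, the \'etale local structure theorem of Alper--Hall--Rydh, and gluing of the local invariant-ring quotients, with the crux being the ``strongly \'etale'' descent. Two caveats on the details. First, in the necessity direction the extension of $\Theta_R \setminus \{0\} \to X$ to $\Theta_R \to X$ is not an application of the valuative criterion of separatedness: it holds because $\Theta_R$ and $\Theta_R \setminus \{0\}$ both have good moduli space $\Spec(R)$ and maps to algebraic spaces factor through the good moduli space; the genuinely nontrivial step is then lifting $\Theta_R \to X$ back to $\X$ compatibly with $f$, and ``universality plus universal closedness'' does not by itself produce such a lift. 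Second, in the sufficiency direction, the verification that suitably chosen \'etale charts induce \'etale morphisms of their good moduli spaces is where essentially all of the work in \cite{AHLH} lies. So your proposal is an accurate roadmap but not a self-contained proof --- which is consistent with how the paper itself treats the result, namely as an external input.
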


Moduli of flat objects have affine diagonal, as shown by the following:

\begin{lemma}[{\cite[Lemma 7.20]{AHLH}}]\label{affine_diagonal}
Let $\C \in \Prlo_k$ of finite type equipped with be an admissible $t$-structure $\tau$ universally satisfying openness of flatness.
Then $\M_\C^\heartsuit$ has affine diagonal.
\end{lemma}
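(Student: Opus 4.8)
The plan is to check affineness of the diagonal $\Delta\colon \M_\C^\heartsuit \to \M_\C^\heartsuit \times_k \M_\C^\heartsuit$ pointwise, by pulling back along maps from classical affines. A morphism $\Spec(A) \to \M_\C^\heartsuit \times_k \M_\C^\heartsuit$ from a discrete $A$ is precisely a pair $F, G \in \C_A^\heartsuit$ of $\tau$-flat pseudo-perfect families (\cref{tau_flat_in_heart}), and the fibre product $\Spec(A) \times_{\M_\C^\heartsuit\times_k\M_\C^\heartsuit}\M_\C^\heartsuit$ is the functor $\underline{\mathrm{Isom}}_A(F,G)$ sending a discrete $A$-algebra $B$ to the set of isomorphisms $F_B \xrightarrow{\ \sim\ } G_B$ in $\C_B^\heartsuit$. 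Since such affines cover the product, it suffices to show that each $\underline{\mathrm{Isom}}_A(F,G)$ is affine over $\Spec(A)$.

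First I would produce the mapping complex and show it is perfect. Because $\C$ is of finite type it is smooth (\cref{relation_finiteness_conditions}), so by \cref{pseudo_perfect_vs_compact} the pseudo-perfect family $F$ is compact; moreover $\C$ admits a compact generator $E$ (\cref{relation_finiteness_conditions}), and $F$ is built from $E\otimes_k A$ by finitely many shifts, finite colimits and retracts. Applying $\RHOM_{\C_A}(-,G)$ and using that $G$ is pseudo-perfect, i.e.\ that $\RHOM_{\C_A}(E\otimes_k A, G)\in \Perf(A)$, shows that $\mathcal H \coloneqq \RHOM_{\C_A}(F,G) \in \Perf(A)$. As $F, G$ lie in the heart, $\mathcal H$ is coconnective, and flatness of $F$ guarantees that its formation commutes with (underived) base change, so that $\Hom_{\C_B^\heartsuit}(F_B,G_B) = H^0(\mathcal H \otimes_A B)$ for every discrete $A$-algebra $B$.

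Next I would represent the Hom-functor by an affine scheme. Presenting the coconnective perfect complex $\mathcal H$ in non-negative degrees by a complex of finite projective $A$-modules, the functor $B \mapsto H^0(\mathcal H\otimes_A B) = \Ker(\mathcal H^0\otimes_A B \to \mathcal H^1 \otimes_A B)$ is representable by a closed subscheme of the vector bundle $\mathbb{V}(\mathcal H^0)$ associated to $\mathcal H^0$; in particular $\underline{\Hom}_A(F,G)$ is affine over $\Spec(A)$, and symmetrically so is $\underline{\Hom}_A(G,F)$. Finally I realise $\underline{\mathrm{Isom}}_A(F,G)$ as the subfunctor of $\underline{\Hom}_A(F,G)\times_A \underline{\Hom}_A(G,F)$ cut out by the two equations $\psi\circ\phi = \mathrm{id}_F$ and $\phi\circ\psi = \mathrm{id}_G$. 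Composition is a morphism of affine $A$-schemes, and $\underline{\End}_A(F)$, $\underline{\End}_A(G)$ are affine, hence separated, so these are closed conditions; thus $\underline{\mathrm{Isom}}_A(F,G)$ is a closed subscheme of an affine scheme and therefore affine.

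The main obstacle I expect is the base-change identity $\Hom_{\C_B^\heartsuit}(F_B,G_B)=H^0(\mathcal H\otimes_A B)$: both the perfectness of $\mathcal H$ and the commutation of its formation with underived base change rest on the flatness of the families, that is, on the admissibility of $\tau$ together with universal openness of flatness, and this is precisely where the hypotheses enter. Once this is secured, the reduction of $\underline{\mathrm{Isom}}$ to a closed subscheme of an affine scheme is formal.
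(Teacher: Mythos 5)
Your proposal is correct and follows essentially the same route as the paper, whose proof simply observes that the argument of \cite[Lemma 7.20]{AHLH} goes through verbatim once $\M_\C^\heartsuit$ is known to be algebraic; what you have written is an explicit unpacking of that argument (representing $\underline{\mathrm{Isom}}_A(F,G)$ as a closed subscheme of an affine $\underline{\Hom}$-scheme built from the perfect mapping complex, whose nonnegative Tor-amplitude comes from flatness of the families). The only point worth flagging is that coconnectivity of $\mathcal H$ alone does not yield a presentation in non-negative degrees; you need coconnectivity after every base change, which your flatness argument does supply.
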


\begin{proof}
	By \cref{geometric_heart} we have that $\M_\C^\heartsuit$ is an algebraic stack locally of finite presentation.
	Then the proof of \cite[Lemma 7.20]{AHLH} goes through without changes.
\end{proof}

\begin{proposition}[{\cite[Corollary 3.2.11]{good_Perv}}]\label{closed_point_is_semisimple}
	Let $\C \in \Prlo_k$ of finite type equipped with be an admissible $t$-structure $\tau$ universally satisfying openness of flatness. 
	For $\Spec(\kappa) \to \Spec(k)$ a closed point with $\kappa$ algebraically closed, the closed $\kappa$-points of $\M_\C^\heartsuit$ parametrize pseudo-perfect semsimple objects of $\C_\kappa^\heartsuit$.
\end{proposition}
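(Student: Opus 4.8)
The plan is to identify closed $\kappa$-points with semisimple objects by translating the topology of $|\M_\C^\heartsuit|$ near a point into the representation theory of the abelian category $\C_\kappa^\heartsuit$, encoding degenerations as maps out of $\Theta_\kappa$. First I would record what a $\kappa$-point is. Since $\kappa$ is a field, \cref{tau_flat_in_heart} identifies a $\tau$-flat pseudo-perfect family over $\kappa$ (\cref{pseudo_perfect}) with a pseudo-perfect object $M \in \C_\kappa^\heartsuit$; pseudo-perfectness over a field forces $\End_{\C_\kappa^\heartsuit}(M)$ to be a finite-dimensional $\kappa$-algebra, so the stabilizer of $[M]$ is the affine algebraic group $\mathrm{Aut}(M) = \mathrm{GL}_1(\End(M))$, consistent with the affine diagonal of \cref{affine_diagonal}. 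The question then becomes purely which isomorphism classes $[M]$ are closed in the topological space $|\M_\C^\heartsuit|$.

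The main geometric input is that maps $\Theta_\kappa = [\A^1_\kappa/\G_m] \to \M_\C^\heartsuit$ classify bounded exhaustive $\ZZ$-filtrations of pseudo-perfect objects of the heart. Under the Rees-type description of $\QCoh(\Theta_\kappa)$ as filtered objects, restriction along the open point $1 \in \Theta_\kappa$ recovers the underlying object $M = \bigcup_n F_n$, while restriction along the unique closed point $0 = B\G_{m,\kappa}$ recovers the associated graded $\Gr^\bullet(M) = \bigoplus_n F_n/F_{n-1}$; that both operations land again in $\C_\kappa^\heartsuit$ uses the $t$-exactness built into the flatness hypothesis. Because $0 \in \overline{\{1\}}$ in $|\Theta_\kappa|$ and the classifying map is continuous, every filtration of $M$ exhibits $[\Gr^\bullet(M)]$ as a specialization of $[M]$.

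With this dictionary I would prove the two implications. If $M$ is not semisimple it has a proper nonzero subobject $N \subset M$ which is not a direct summand; the two-step filtration $0 \subset N \subset M$ yields a map $\Theta_\kappa \to \M_\C^\heartsuit$ with special fiber $N \oplus (M/N) \not\cong M$, so $[M]$ admits a nontrivial specialization and is not closed. Conversely, since pseudo-perfect objects of $\C_\kappa^\heartsuit$ are of finite length, iterating a composition series together with the previous paragraph shows that every $[M]$ specializes to its semisimplification $[\Gr(M)]$, which is semisimple; it therefore suffices to prove that a semisimple object is closed, equivalently that it admits no nontrivial specialization.

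The hard part is precisely this last step: showing that for semisimple $M$ every specialization $[M] \rightsquigarrow [M']$ is realized by a map from $\Theta_\kappa$, for then the filtration splits — a semisimple object has $\Gr^\bullet(M) \cong M$ for every filtration — forcing $[M'] = [M]$. This is the Hilbert–Mumford content of the statement, and I expect the genuine technical work to lie entirely in this filtration-extraction. I would witness an abstract specialization by a map $\Spec R \to \M_\C^\heartsuit$ from a DVR via the valuative criterion, and then convert this $R$-family into the desired $\Theta_\kappa$-filtration by the Rees construction, i.e. exactly the mechanism underlying $\Theta$-reductiveness and $\mathrm{S}$-completeness (\cref{def_Theta_reductive} and \cref{def_S_complete}). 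Since $\mathrm{Aut}(M) \cong \prod_i \mathrm{GL}_{n_i}$ is linearly reductive in characteristic $0$, an alternative is to invoke the étale-local quotient presentation of $\M_\C^\heartsuit$ near $[M]$ — available because it is of finite type with affine diagonal — reducing the claim to the classical fact that closed orbits of a reductive group on affine space correspond to polystable, i.e. semisimple, objects. Either route closes the argument.
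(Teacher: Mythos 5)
The paper does not actually prove this proposition --- it is imported wholesale from \cite[Corollary 3.2.11]{good_Perv} --- so there is no internal proof to compare against; I am judging your proposal on its own terms. Your setup is the right one: the dictionary between maps $\Theta_\kappa \to \M_\C^\heartsuit$ and filtered flat pseudo-perfect objects of the heart, with restriction to $1$ and to $0=\mathrm{B}\G_{m,\kappa}$ recovering the object and its associated graded, is indeed the expected input, and the direction ``not semisimple $\Rightarrow$ not closed'' is essentially complete via iterated specialization to the semisimplification. (A minor caveat: in your one-step version you would still need $N \oplus M/N \not\cong M$ for a non-split extension, which is not automatic from ``$N$ is not a direct summand''; the composition-series argument you also give sidesteps this.)

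The genuine gap is the converse, ``semisimple $\Rightarrow$ closed'', which you correctly identify as the hard part but do not prove: neither of your two routes closes it. For route (a), witnessing a specialization $[M] \rightsquigarrow [M']$ by a DVR is standard, but converting that $R$-family into a map $\Theta_\kappa \to \M_\C^\heartsuit$ with generic point $M$ and closed point $M'$ is \emph{not} ``the Rees construction'': the Rees construction turns a given filtration into a $\Theta$-family, whereas here you must \emph{extract} a filtration of $M$ from an arbitrary flat degeneration. That extraction is a Langton/semistable-reduction type statement and does not follow formally from $\Theta$-reductiveness or $\mathrm{S}$-completeness of the stack; it is precisely the content one has to establish. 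For route (b), the \'etale local structure theorem does produce an affine \'etale morphism $f \colon [\Spec(B)/\mathrm{Aut}(M)] \to \M_\C^\heartsuit$ sending a $G$-fixed (hence closed) point to $[M]$, but \'etale affine morphisms need not send closed points to closed points (the open immersion of the generic point of the spectrum of a DVR already fails this), so closedness of the orbit in the local chart does not descend to closedness of $[M]$ without an extra argument controlling the entire closure of $[M]$ --- for instance that $\overline{\{[M]\}}$ is contained in the image of $f$, or a semicontinuity argument on $\dim\Hom$. One of these missing steps is where the real work of \cite[Corollary 3.2.11]{good_Perv} lies, and your proposal leaves it open.
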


In our setting having affine diagonal (thus affine stabilizers) and being locally of finite presentation come for free (\cref{affine_diagonal} and \cref{geometric_heart}), so that in \cref{existence_good_separated} one only needs to check quasi-compactness, $\Theta$-reductiveness and $\mathrm{S}$-completeness.
The stack $\M_\C^{\left[0,0\right]}$ will essentially never be quasi-compact, but $\Theta$-reductiveness and $\mathrm{S}$-completeness are stable under taking closed substacks.
The next result gives a way to cut quasi-compact substacks.

\begin{notation}\label{nu_substacks}
Let $\C \in \Prlo_k$ be a finite type category.
Let $E \in \C$ be a compact generator and $\nu: \ZZ \to \N$ be a function with finite support.
Define a presheaf
\[
\M_\C^\nu: \dAff_k^{op} \to \Spc
\]
that assigns to $\Spec(A)$ the maximal groupoid
\[
\M_\C^\nu(A) \subset \C_A^{\simeq}
\]
spanned by those $F \in \Funst_k((\C^{\omega})^{op}, \Perf(A))$ for which the following property holds: for every field $K$ and every $A \to \pi_0(A) \to K$ we have $\dim_K \pi_i(F(E)\otimes_A K) \leq \nu(i)$.\\
	This defines a substack of $\M_\C$, which depends on the choice of the compact generator $E$.
\end{notation}

\begin{proposition}[{\cite[Proposition 3.20]{TV}}]\label{quasi_compact_substacks}
In the setting of Notation \ref{nu_substacks}, the derived stack $\M_\C^\nu \subset \M_\C$ defines an open substack of finite presentation over $k$.
\end{proposition}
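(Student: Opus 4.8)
The plan is to realise $\M_\C^\nu$ as the preimage, under evaluation at the compact generator $E$, of the corresponding locus inside the moduli of perfect complexes: openness will follow from semicontinuity of fibrewise dimensions, while quasi-compactness is forced by the finite support of $\nu$. Since $\C$ is of finite type it is smooth (\cref{relation_finiteness_conditions}), so the compact generator $E$ exists, and evaluation at $E$ defines a morphism of stacks
\[
\ev_E \colon \M_\C \to \M_{\Mod_k}, \qquad F \mapsto F(E) = \Hom_{\C_A}(E \otimes_k A, F),
\]
where $\M_{\Mod_k}$ is the moduli of perfect complexes ($\Mod_k$ being of finite type with compact generator $k$, whose pseudo-perfect families over $A$ are exactly $\Perf(A)$). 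By \cref{compact_generator_smooth} this is well defined, since $F(E) \in \Perf(A)$ precisely when $F$ is pseudo-perfect.

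Next I would observe that the defining condition of $\M_\C^\nu$, namely the fibrewise bound $\dim_K \pi_i(F(E)\otimes_A K) \leq \nu(i)$ over all residue fields $K$ of $\pi_0(A)$, depends only on the perfect complex $F(E)$. Hence $\M_\C^\nu = \ev_E^{-1}(\M_{\Mod_k}^\nu)$, where $\M_{\Mod_k}^\nu \subset \M_{\Mod_k}$ is the analogous subfunctor of perfect complexes satisfying the same bounds. For a fixed $P \in \Perf(A)$ the functions $\mathfrak p \mapsto \dim_{\kappa(\mathfrak p)} \pi_i(P \otimes_A \kappa(\mathfrak p))$ are upper semicontinuous on $\Spec(\pi_0 A)$ (the same semicontinuity for perfect complexes that underlies openness of flatness, cf. \cite[Corollary 6.1.4.6]{SAG}), and this is compatible with pullback. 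Bounding each of them above by $\nu(i)$ therefore cuts out an open subscheme of $\Spec(\pi_0 A)$, so $\M_{\Mod_k}^\nu$ is an open substack and consequently so is $\M_\C^\nu \subset \M_\C$. As $\M_\C$ is locally of finite presentation, $\M_\C^\nu$ is automatically locally of finite presentation.

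It remains to establish that $\M_\C^\nu$ is quasi-compact (and quasi-separated), and this is where I expect the main obstacle to lie, since quasi-compactness does not pull back along $\ev_E$ for free. The finite support of $\nu$ forces $F(E)$ to have cohomology concentrated in the fixed interval $\mathrm{supp}(\nu)$ with fibrewise dimensions bounded by $\nu$, so $\M_{\Mod_k}^\nu$ lands in the quasi-compact open piece of $\M_{\Mod_k}$ parametrising perfect complexes of bounded amplitude and bounded fibrewise rank. To transfer this boundedness to $\M_\C$ I would use the smooth presentation $\C \simeq \LMod_{A^{rev}}$ with $A = \End(E)$ smooth, hence of finite presentation, over $k$ (\cref{rem_smooth}): under it a family $F$ is an $A \otimes_k(-)$-module whose underlying complex is $F(E)$, so $\ev_E$ simply forgets the $A$-action. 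Bounding $F(E)$ thus bounds the underlying module, while the $A$-action is extra data of finite type; presenting $\M_\C^\nu$ as a quotient of a finite-type affine scheme by an algebraic group over the bounded locus (along the lines of the construction of $\M_{\Mod_k}^\nu$ in \cite[Section 3]{TV}) then exhibits it as quasi-compact and of finite presentation. The careful bookkeeping of this presentation, in particular verifying that the $A$-module structures form a finite-type parameter space over the bounded locus and that $\ev_E$ is thereby quasi-compact, is the technical heart of the argument.
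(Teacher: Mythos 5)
The paper does not actually prove this statement: it is imported verbatim from \cite[Proposition 3.20]{TV}, so the only meaningful comparison is with the argument there. Your strategy is the same as Toën--Vaquié's: reduce everything to the perfect complex $F(E)$ via evaluation at a compact generator, get openness from upper semicontinuity of the fibrewise dimensions $\dim_K \pi_i(F(E)\otimes_A K)$ (a finite intersection of open conditions, since a perfect complex over a given $A$ has bounded Tor-amplitude and $\nu$ has finite support), and get quasi-compactness from an explicit atlas parametrising module structures on a bounded complex with fixed fibrewise ranks. The first two steps are fine. The problem is in the quasi-compactness step, which is the entire content of the proposition beyond formal openness: you only gesture at it, and the one concrete justification you do offer is incorrect. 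Smoothness of $\End(E)$ does \emph{not} imply that it is of finite presentation as an algebra over $k$ (smooth \emph{and proper} implies finite type, cf.\ \cref{relation_finiteness_conditions}, but properness is not assumed here). What is actually needed, and what \cite{TV} prove, is that $\C$ being a compact object of $\Prlo_k$ forces $\C \simeq \Mod_B$ for $B$ a finite-type (i.e.\ compact) algebra; it is this compactness of $B$, not smoothness, that makes the space of $B$-module structures on a fixed bounded complex of vector bundles with ranks bounded by $\nu$ a finite-type affine scheme, and hence makes the resulting atlas, a quotient of a finite-type affine scheme by $\prod_i \mathbf{GL}_{\nu(i)}$, quasi-compact and of finite presentation. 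With that substitution your outline is exactly the proof of \cite[Proposition 3.20]{TV}; as written, the ``technical heart'' you defer is precisely where the finite-type hypothesis on $\C$ enters, and the route you propose through smoothness alone would not get you there.
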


\begin{warning}
	The $\Theta$-reductiveness and $S$-completeness are preserved under taking closed substacks.
	The substacks of \cref{quasi_compact_substacks} are open, but not closed in general.\\
	Actually we will be interested in the quasi-compact substack stack defined by the pullback square of open immersions
\[\begin{tikzcd}[sep=small]
	{\M_\C^{\left[0,0\right], \nu}} && {\M_\C^{\left[0,0\right]}} \\
	\\
	{\M_\C^\nu} && {\M_\C}
	\arrow[circled, hook, from=1-1, to=1-3]
	\arrow[circled, hook, from=1-1, to=3-1]
	\arrow[circled, hook, from=1-3, to=3-3]
	\arrow[circled, hook, from=3-1, to=3-3]
\end{tikzcd}\]
Since we impose flatness, one often gets that the top horizontal map is both an open and closed immersion (thus a union of connected components).
\end{warning}

Finally, let us recall that a notion of derived good moduli spaces has been introduced in \cite{derived_good}.
The following is the main result of that paper:
\begin{theorem}[{\cite[Theorem 2.12]{derived_good}}]\label{derived_good}
Let $\X$ be a geometric derived stack.
\begin{enumerate}\itemsep=0.2cm
    \item If $t_0\X$ admits a good moduli space $t_0q \colon  t_0\X \to t_0X$, then $\X$ admits a derived good moduli space $q' \colon  \X \to X$ such that $t_0q' \simeq t_0q$.
    \item If $\X$ admits a derived good moduli space $q \colon  \X \to X$, then $t_0q$ is a good moduli space for $t_0X$.
\end{enumerate}
\end{theorem}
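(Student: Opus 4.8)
The plan is to prove the two implications separately, using the étale-local structure of good moduli spaces in characteristic $0$ as the main engine; throughout, the decisive input is that the stabilizers occurring are linearly reductive, so that the formation of invariants is an exact operation. I would treat the easier converse direction first.

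For part (2), suppose $q \colon \X \to X$ is a derived good moduli space and apply the truncation functor $t_0$. The morphism $t_0 q$ is qcqs because $q$ is. The defining equivalence $\OO_X \simeq q_\ast \OO_\X$ gives, upon passing to $\pi_0$ and using the $t$-exactness of $q_\ast$, an identification $\OO_{t_0 X} \simeq \pi_0(q_\ast \OO_\X)$. The remaining point is a base-change statement along the closed immersion $t_0 X \hookrightarrow X$: the derived cohomological affineness of $q$ yields $\pi_0(q_\ast \OO_\X) \simeq (t_0 q)_\ast \OO_{t_0\X}$, whence $\OO_{t_0 X} \simeq (t_0 q)_\ast \OO_{t_0 \X}$, which is the second axiom of a classical good moduli space. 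The $t$-exactness of $(t_0 q)_\ast$ on $\QCoh(t_0 \X)^\heartsuit$ is then inherited from that of $q_\ast$ through the same base change.

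For part (1), I would construct $X$ as a derived thickening of the classical good moduli space $t_0 X$. First I would invoke the étale-local structure theorem for good moduli spaces (Alper--Hall--Rydh, \cite{AHR}): étale-locally on $t_0 X$ one has a presentation $t_0 \X \simeq [\Spec(A_0)/G] \to \Spec(A_0^{G}) \simeq t_0 X$ with $G$ linearly reductive. Since $\X$ is a derived enhancement of $t_0 \X$, one lifts this to $\X \simeq [\Spec(A)/G]$ with $A$ a connective animated $k$-algebra satisfying $\pi_0 A \simeq A_0$, and sets, locally, $X \simeq \Spec(A^{G})$ (derived invariants). Exactness of invariants in characteristic $0$ forces $\pi_0(A^{G}) \simeq (\pi_0 A)^{G} \simeq A_0^{G}$, so the classical truncation of this local model agrees with $t_0 X$, and locally it is a derived good moduli space by construction. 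These local models glue, since $q_\ast \OO_\X$ is a connective sheaf of animated rings on the étale site of $t_0 X$ whose $\pi_0$ is $\OO_{t_0 X}$; I would take $X$ to be the derived algebraic space with this structure sheaf and the underlying space of $t_0 X$, and let $q' \colon \X \to X$ be the induced morphism (the unit of the invariants adjunction), which satisfies $t_0 q' \simeq t_0 q$. The axioms then follow: $\OO_X \simeq q'_\ast \OO_\X$ by definition, and $t$-exactness of $q'_\ast$ from exactness of invariants under linear reductivity.

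The main obstacle is the global step in part (1): verifying that the locally defined derived invariants glue to a genuine, representable derived algebraic space and that $q_\ast \OO_\X$ carries the correct connectivity and coherence for $X$ to have $t_0 X$ as its classical truncation. Equivalently, the heart of the matter is the derived cohomological affineness of $q'$, namely the $t$-exactness of $q'_\ast$, together with the base-change identity $\pi_0(q'_\ast \OO_\X) \simeq (t_0 q)_\ast \OO_{t_0 \X}$; both rest on the exactness of the invariants functor, which is precisely where the characteristic-$0$ hypothesis enters.
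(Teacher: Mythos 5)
This statement is not proved in the paper at all: it is imported verbatim as Theorem~2.12 of the cited reference on derived good moduli spaces, so there is no internal argument to compare yours against. Judged on its own terms, your part~(2) is essentially sound: qcqs-ness passes to truncations, and the identification $\pi_0(q_\ast\OO_\X)\simeq (t_0q)_\ast\OO_{t_0\X}$ together with exactness on hearts does follow from the $t$-exactness of $q_\ast$ applied to the Postnikov truncation $\OO_\X\to\pi_0\OO_\X$; this is a complete argument once that one base-change identity is spelled out.

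Part~(1), however, is a plan rather than a proof, and the gap sits exactly where you locate it yourself. The sentence ``one lifts this to $\X\simeq[\Spec(A)/G]$ with $A$ a connective animated $k$-algebra satisfying $\pi_0 A\simeq A_0$'' is the entire deformation-theoretic content of the theorem: you must show that the $G$-torsor $\Spec(A_0)\to t_0\X$ deforms (uniquely) along each square-zero stage of the Postnikov tower of $\X$, which requires identifying the obstruction groups and killing them using cohomological affineness and linear reductivity; nothing in your text does this. Likewise, the gluing of the local models $\Spec(A^G)$ into a representable derived algebraic space, and the verification that the result is independent of the choices in the Alper--Hall--Rydh presentation, are asserted but not carried out --- and you explicitly flag them as ``the main obstacle.'' A cleaner route, which avoids the \'etale-local structure theorem altogether, is to define $X$ globally as the derived space with underlying topological space that of $t_0X$ and structure sheaf $q'_\ast\OO_\X$ (equivalently, to climb the Postnikov tower of $\OO_\X$ and use $t$-exactness of the classical pushforward to produce compatible square-zero extensions of $t_0X$); either way, the connectivity and quasi-coherence of $q_\ast\pi_i\OO_\X$ over $\OO_{t_0X}$ must be established, not assumed. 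As written, part~(1) should be regarded as an outline with the decisive steps missing.
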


\section{Good moduli spaces for $\mathbf{Rep}_k(\I)^\heartsuit$}\label{good_Rep_section}
In this paragraph we show the existence of good moduli spaces for moduli of representations.

\begin{lemma}\label{Hartogs_loc_free}
	Let $X$ be a discrete regular integral noetherian scheme of dimension $2$.
	Let $U \subset X$ be the complement of a $0$-dimensional closed subscheme.
	Then the functor
\[
\mathrm{H}^0(j_\ast) \colon \QCoh(U)^\heartsuit \to \QCoh(X)^\heartsuit
\]
preserves locally free sheaves of finite rank.
\end{lemma}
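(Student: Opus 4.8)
The plan is to reduce the statement to a purely local question in commutative algebra and then invoke the description of reflexive modules over a regular local ring of dimension two. Since being locally free of finite rank is a Zariski-local condition on $X$, and since $\mathrm{H}^0(j_\ast)$ restricts to the identity on $U$ (where the input sheaf is already locally free of finite rank), it suffices to analyze the stalks of $\mathrm{H}^0(j_\ast \mathcal F)$ at the finitely many closed points making up the $0$-dimensional subscheme $Z = X \setminus U$. Localizing at such a point $x$, I reduce to the case $X = \Spec(R)$ with $R$ a regular local ring of dimension two and $U = \Spec(R) \setminus \{\mathfrak m\}$ the punctured spectrum; as $R^0 j_\ast$ commutes with this localization, the stalk in question is the $R$-module $M \coloneqq \Gamma(U, \mathcal F)$.

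The first step is to show that $M$ is a finitely generated reflexive $R$-module. Here $R$, being regular local, is a normal (indeed factorial) noetherian domain, and $Z = \{\mathfrak m\}$ has codimension two. I would choose any coherent extension of $\mathcal F$ to $\Spec(R)$, i.e.\ a finitely generated $R$-module $N$ with $\widetilde N|_U \simeq \mathcal F$; replacing $N$ by $N/(\text{torsion})$ affects nothing at the height-one primes, all of which lie in $U$. Because $\mathcal F$ is locally free — hence satisfies Serre's condition $S_2$ — on the regular scheme $U$, and because the height-one points of $U$ are exactly those of $X$, taking sections yields $M = \bigcap_{\mathrm{ht}\,\mathfrak p = 1} \mathcal F_{\mathfrak p} = \bigcap_{\mathrm{ht}\,\mathfrak p = 1} N_{\mathfrak p}$, computed inside $N \otimes_R \Frac(R)$. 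This intersection is the reflexive hull $N^{\ast\ast} = \Hom_R(\Hom_R(N, R), R)$, which is finitely generated (a $\Hom$ between finitely generated modules over the noetherian ring $R$) and reflexive. Thus $M \cong N^{\ast\ast}$ is finitely generated and reflexive, and in particular $\mathrm{H}^0(j_\ast \mathcal F)$ is coherent.

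The second step is to deduce that $M$ is free. Over the normal domain $R$, reflexivity of $M$ forces $\mathrm{depth}_{\mathfrak m} M \ge 2$; since $\dim R = 2$ this gives $\mathrm{depth}_{\mathfrak m} M = 2$. The Auslander–Buchsbaum formula then yields $\mathrm{pd}_R M = \mathrm{depth}\,R - \mathrm{depth}\,M = 2 - 2 = 0$, so $M$ is projective, hence free of finite rank over the local ring $R$. (Equivalently, a reflexive module over a two-dimensional regular local ring is maximal Cohen–Macaulay, and over a regular local ring every such module is free.) This shows the stalk of $\mathrm{H}^0(j_\ast \mathcal F)$ at each point of $Z$ is free of finite rank; combined with freeness on $U$ and the coherence from the first step, the standard fact that a coherent sheaf with free stalk is free in a neighborhood gives that $\mathrm{H}^0(j_\ast \mathcal F)$ is locally free of finite rank on all of $X$.

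The main obstacle I anticipate is the first step: identifying $\Gamma(U, \mathcal F)$ with a reflexive hull and thereby securing its finite generation. One must use normality of $R$ together with the codimension-two condition to pass from the a priori unwieldy module of sections over the punctured spectrum to the reflexive closure of a coherent extension — the point being that deleting the codimension-two locus $Z$ does not alter the height-one localizations, which is precisely what lets $S_2$ control the sections. Once finite generation and reflexivity are secured, the passage to freeness via Auslander–Buchsbaum is routine.
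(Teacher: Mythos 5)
Your proof is correct and follows essentially the same route as the paper's: both identify locally free sheaves of finite rank on the regular two-dimensional scheme $X$ with reflexive sheaves, and both show that $\mathrm{H}^0(j_\ast)$ of a locally free sheaf on $U$ is reflexive (hence coherent and locally free) because $X \setminus U$ has codimension two. The only difference is that you supply self-contained proofs, via the reflexive hull of a coherent extension, the depth bound for duals, and Auslander--Buchsbaum, of the two facts that the paper simply cites from Hartshorne's results on reflexive sheaves.
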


\begin{proof}
	By \cite[Corollary 1.4]{Hartshorne}, locally free sheaves on $X$ of finite rank agree with reflexive sheaves, that is, quasi-coherent sheaves that are equivalent to their double dual.
	By \cite[Corollary 1.4 \& Proposition 1.6-(3)]{Hartshorne}, the sheaf $\mathrm{H}^0(j_*)F$ is a locally free sheaves of finite rank and is the unique such extension.
\end{proof}

\begin{theorem}\label{Hartogs_Rep}
Let $\I \in \Cat_\infty^\omega$.
The algebraic stack $\mathbf{Rep}_k(\I)^\heartsuit$ satisfies the Hartogs' principle.
\end{theorem}

\begin{proof}
	Let $X = \Spec(A)$ be a discrete normal integral noetherian affine scheme and $j\colon U \hookrightarrow X$ the complement of a $0$-dimensional closed subscheme of $X$.
	We need to show that every diagram of solid arrows
\begin{equation}\label{Hartogs_diagram}
\begin{tikzcd}[sep=small]
	U && {\mathbf{Rep}_k(\I)^\heartsuit} \\
	\\
	X
	\arrow["F", from=1-1, to=1-3]
	\arrow["j"', hook, from=1-1, to=3-1]
	\arrow[dashed, from=3-1, to=1-3]
\end{tikzcd}
\end{equation}
	By descent, a map $U \to \mathbf{Rep}_k(\I)^\heartsuit$ corresponds to representation $F \colon \I \to \QCoh(U)^\heartsuit$ that take values in locally free sheaves of finite rank.
	Consider the representation
\[
\mathrm{H}^0(j_\ast)F \colon \I \to \QCoh(X)^\heartsuit
\]
\[
c \mapsto \mathrm{H}^0(j_\ast )(F(c))
\]
	By \cref{Hartogs_loc_free}, $\mathrm{H}^0(j_\ast )F$ takes values in locally free sheaves of finite rank and it is the unique such extension.
	This shows that $\mathrm{H}^0(j_\ast) F$ is the unique extension of $F$ that makes the triangle \eqref{Hartogs_diagram} commute.
\end{proof}

\begin{definition}
	Let $\I \in \Cat_\infty^\omega$
	Let $A$ be a discrete commutative algebra over $k$ and $F \in \C^\heartsuit_A$ be a pseudo-perfect $\tau_A$-flat object.
	We say that $F$ lies over a substack $\X \subset \REP$ if the associated morphism $x_F \colon \Spec(A) \to \REP$ factors through $\X$.
\end{definition}

Let $\Spec(\kappa) \to \Spec(k)$ be a closed point with $\kappa$ algebraically closed. \cref{Hartogs_Rep} yields the following

\begin{corollary}\label{good_Rep}
	Let $\I \in \Cat_\infty^\omega$.
	The algebraic stack $\REP$ is $\Theta$-reductive and $\mathrm{S}$-complete.
	In particular, any quasi-compact closed substack $\X \subset \REP$ admits a separated good moduli space $X$ whose $\kappa$-points parametrize pseudo-perfect semisimple objects over $\X$.
	Moreover $X$ comes with a natural derived enhancement if $\X$ is the truncation of a derived substack of $\dREP$.
\end{corollary}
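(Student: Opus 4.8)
The plan is to assemble the statement from the Hartogs' principle of \cref{Hartogs_Rep} together with the existence criterion \cref{existence_good_separated}. First I would invoke \cref{Hartogs_Rep}: the stack $\REP$ satisfies the Hartogs' principle, so by \cref{Hartogs_implies_existence} it is automatically $\Theta$-reductive and $\mathrm{S}$-complete. This already yields the first assertion. Since $\Theta$-reductiveness and $\mathrm{S}$-completeness are stable under passage to closed substacks (they are cut out by unique-lifting conditions against maps out of $\Theta_R \setminus \{0\}$ and $\ST \setminus \{0\}$, and a closed substack absorbs the unique extensions through the closed point), every quasi-compact closed $\X \subset \REP$ is again $\Theta$-reductive and $\mathrm{S}$-complete.

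Next I would verify the remaining hypotheses of \cref{existence_good_separated} for such an $\X$. By \cref{openness_flatness_Rep} the ambient stack $\REP$ is a classical algebraic stack locally of finite presentation over $k$, and by \cref{affine_diagonal} it has affine diagonal, hence affine stabilizers and separated diagonal; all of these properties inherit to the closed substack $\X$. Since $k$ is noetherian and $\X$ is quasi-compact, $\X$ is of finite presentation over $k$. Thus $\X$ meets every hypothesis of \cref{existence_good_separated} over $S = \Spec(k)$, and I conclude that $\X$ admits a separated good moduli space $q \colon \X \to X$.

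To identify the $\kappa$-points I would use the general theory of good moduli spaces recorded in \cref{good_moduli_properties}: the $\kappa$-points of $X$ biject with closure-equivalence classes of $\kappa$-points of $\X$, each such class containing a unique closed representative. A $\kappa$-point of $\X$ that is closed in $\X$ is closed in $\REP$, because $\X \hookrightarrow \REP$ is a closed immersion; conversely a closed $\kappa$-point of $\REP$ lying in $\X$ is closed in $\X$. Applying \cref{closed_point_is_semisimple} to $\C = \Repk$ identifies the closed $\kappa$-points of $\REP$ with the pseudo-perfect semisimple objects of $\Rep_\kappa(\I)^\heartsuit$, and intersecting with $\X$ identifies the $\kappa$-points of $X$ with the pseudo-perfect semisimple objects lying over $\X$.

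Finally, for the derived enhancement I would appeal to \cref{derived_good}: if $\X = t_0 \mathbb{X}$ for a geometric derived substack $\mathbb{X} \subset \dREP$, then part~(1) of that theorem promotes the good moduli space $q$ of $t_0 \mathbb{X} = \X$ to a derived good moduli space $\mathbb{X} \to \mathbf{X}$ whose truncation recovers $q$, so $X$ acquires the desired derived enhancement $\mathbf{X}$. I expect the only genuinely delicate point to be the identification of the $\kappa$-points: it hinges on matching the closed points of the good moduli space with the closed points of $\X$ via closure equivalence, and on checking that closedness in $\X$ and in $\REP$ agree, so that \cref{closed_point_is_semisimple} can be transported from the ambient stack to the quasi-compact substack.
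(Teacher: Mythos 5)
Your proposal is correct and follows essentially the same route as the paper: Hartogs' principle (\cref{Hartogs_Rep}) plus \cref{Hartogs_implies_existence} for $\Theta$-reductiveness and $\mathrm{S}$-completeness, affine diagonal from \cref{affine_diagonal}, stability of these properties under closed substacks to apply \cref{existence_good_separated}, then \cref{good_moduli_properties}-(4) with \cref{closed_point_is_semisimple} for the $\kappa$-points and \cref{derived_good} for the derived enhancement. Your additional care in matching closed points of $\X$ with closed points of $\REP$ is a reasonable elaboration of a step the paper leaves implicit.
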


\begin{proof}
	By \cref{Hartogs_Rep} and \cref{Hartogs_implies_existence}, $\REP$ is $\Theta$-reductive and $\mathrm{S}$-complete. Moreover $\REP$ has affine diagonal by \cref{affine_diagonal}.
	Since these properties are stable under taking closed substacks, $\X$ satisfies the assumptions of \cref{existence_good_separated}.
	The characterization of $\kappa$-points of $X$ follows from \cref{good_moduli_properties}-(4) and \cref{closed_point_is_semisimple}.
	Derived enhancement are provided by \cref{derived_good}.
\end{proof}

\begin{construction}\label{fixed_rank_Rep}
	Let $\I \in \Cat_\infty^\omega$ and choose a finite set of representative $\left\lbrace c_a \right\rbrace_{a \in I}$ of equivalence classes of $\I$ (\cref{cpt_gen_Rep}).
The proof of \cref{openness_flatness_Rep} shows that there is a pullback square
\[\begin{tikzcd}[sep=small]
	{\dREP} && {\mathbf{Rep}_k(\I)} \\
	\\
	{\prod_{a \in I}\mathbf{Vect}_k} && {\prod_{a \in I}\mathbf{Perf}_k}
	\arrow[from=1-1, to=1-3]
	\arrow[from=1-1, to=3-1]
	\arrow[from=1-3, to=3-3]
	\arrow[from=3-1, to=3-3]
\end{tikzcd}\]
where the right vertical map is given by the product of evaluations at any $c_a$, $a \in I$, and $\mathbf{Vect}_k \coloneqq \bigsqcup_{n \in \N} \mathrm{B}\mathbf{GL}_n$.
Choose a set of non-negative integers 
\[
\underline{r}=\left\lbrace r_a\right\rbrace_{a \in I} \subset \N
\] and define the derived  stack $\mathbf{Rep}_{k}^{\underline{r}}(\mathcal{I})$ via the pullback square
\[\begin{tikzcd}[sep=small]
	{\mathbf{Rep}_{k}^{\underline{r}}(\mathcal{I})} && {\dREP} \\
	\\
	{\prod_{a \in I} \mathrm{B}\mathbf{GL}_{r_a}} && {\prod_{a \in I}\mathbf{Vect}_k}
	\arrow[from=1-1, to=1-3]
	\arrow[from=1-1, to=3-1]
	\arrow[from=1-3, to=3-3]
	\arrow[from=3-1, to=3-3]
\end{tikzcd}\]
The stack $\mathbf{Rep}_{k}^{\underline{r}}(\mathcal{I})$ does not depend on the choice of the $c_a$'s, but only on the choice of the set of integers $\underline{r}$.
\end{construction}

\begin{proposition}\label{quasi_compact_Rep}
In the setting of \cref{fixed_rank_Rep}, the derived stack $\mathbf{Rep}_{k}^{\underline{r}}(\mathcal{I})$ is a quasi-compact open and closed substack of $\dREP$.
In particular, it is a $1$-Artin stack of finite presentation over $k$.
\end{proposition}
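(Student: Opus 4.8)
The plan is to deduce both assertions from the two pullback squares of \cref{fixed_rank_Rep}, reducing the open-and-closed statement to the coproduct structure of $\mathbf{Vect}_k$ and the quasi-compactness to the finiteness of the $\nu$-substacks of \cref{quasi_compact_substacks}.

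First I would show that $\mathbf{Rep}_{k}^{\underline{r}}(\mathcal{I}) \hookrightarrow \dREP$ is an open and closed immersion. Since $\mathbf{Vect}_k = \bigsqcup_{n \in \N} \mathrm{B}\mathbf{GL}_n$ is a coproduct, each inclusion $\mathrm{B}\mathbf{GL}_{r_a} \hookrightarrow \mathbf{Vect}_k$ is the inclusion of a connected component, hence an open and closed immersion; taking the product over $a \in I$, the map $\prod_{a} \mathrm{B}\mathbf{GL}_{r_a} \to \prod_{a} \mathbf{Vect}_k$ is again open and closed. Such immersions are stable under base change, so the defining pullback square of \cref{fixed_rank_Rep} exhibits $\mathbf{Rep}_{k}^{\underline{r}}(\mathcal{I}) \to \dREP$ as an open and closed immersion. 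Concretely this records the fact that on $\dREP$ the rank of each $F(c_a)$ is locally constant, so the locus where $\operatorname{rank} F(c_a) = r_a$ for every $a$ is a union of connected components.

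For quasi-compactness I would invoke the $\nu$-substacks. Put $N \coloneqq \sum_{a \in I} r_a$, take the compact generator $E \coloneqq \bigoplus_{a} \ev_{c_a,!}$ of \cref{cpt_gen_Rep}, and let $\nu \colon \ZZ \to \N$ be the function supported at $0$ with $\nu(0) = N$. The crucial claim is that $\M_{\Repk}^{\nu} \subseteq \dREP$: the condition $\pi_i(F(E) \otimes_A K) = 0$ for all $i \neq 0$ and all residue fields $K$ forces the perfect $A$-module $F(E) \simeq \bigoplus_a F(c_a)$ to have Tor-amplitude $[0,0]$, so each $F(c_a)$ is flat and $F$ is $\tau$-flat and pseudo-perfect by \cref{tau_flat_Rep} and \cref{locally_free_valued}. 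Because $\sum_a r_a = N$, the substack $\mathbf{Rep}_{k}^{\underline{r}}(\mathcal{I})$ lands inside $\M_{\Repk}^{\nu}$, and by the previous step it is open and closed, hence closed, in $\M_{\Repk}^{\nu}$. As $\M_{\Repk}^{\nu}$ is of finite presentation over $k$ by \cref{quasi_compact_substacks}, it is quasi-compact, and a closed substack of a quasi-compact stack is quasi-compact.

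Finally, the ``in particular'' clause combines these facts with \cref{openness_flatness_Rep}: there $\dREP$ is shown to be a $1$-Artin derived stack locally of finite presentation over $k$, and an open substack of a $1$-Artin stack is again $1$-Artin and locally of finite presentation. Together with the quasi-compactness just established and the quasi-separatedness coming from the affine diagonal (\cref{affine_diagonal}), this upgrades to finite presentation over $k$. I expect the only genuinely non-formal point to be the inclusion $\M_{\Repk}^{\nu} \subseteq \dREP$, i.e.\ that fiberwise concentration in degree $0$ of a perfect complex upgrades to flatness; everything else is stability of the relevant (co)properties under base change and restriction.
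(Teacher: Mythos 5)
Your proof is correct and follows essentially the same route as the paper: the open-and-closed immersion comes from base-changing the clopen map $\prod_a \mathrm{B}\mathbf{GL}_{r_a} \to \prod_a \mathbf{Vect}_k$, and quasi-compactness comes from embedding into the $\nu$-substack of \cref{quasi_compact_substacks} with $\nu(0)=\sum_a r_a$ evaluated at the generator $E=\bigoplus_a \ev_{c_a,!}$. The only (harmless) divergence is at the end: the paper concludes by observing that the inclusion $\mathbf{Rep}_k^{\underline{r}}(\I)\subset\mathbf{Rep}_k^{\nu}(\I)$ is a quasi-compact morphism (being a pullback of the quasi-compact map $\prod_a \mathrm{B}\mathbf{GL}_{r_a}\to\prod_a\mathbf{Perf}_k$), whereas you argue it is closed in the quasi-compact target — for which you supply the containment $\M^{\nu}\subseteq\dREP$ via the fiberwise Tor-amplitude criterion, a detail the paper leaves implicit.
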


\begin{proof}
Since $I$ is finite, it follows that the map $\prod_{a \in I} \mathrm{B}\mathbf{GL}_{r_a} \to \prod_{a \in I}\mathbf{Vect}_k$ is open and closed and quasi-compact.
It then follows that $\mathbf{Rep}_{k}^{\underline{r}}(\mathcal{I}) \to \dREP$ is also open and closed and quasi-compact.
This proves that $\mathbf{Rep}_{k}^{\underline{r}}(\I)$ is a $1$-Artin stack locally of finite presentation. We are left to prove that it is also quasi-compact.
Consider the function
\[
\nu: \ZZ \to \N
\]
\begin{align*}
m \mapsto 
\left\{
    \begin{aligned}
        & \sum_{a \in I} r_a \ \ & \text{if $n=0$,} \\
        & \ \ \ \ 0 & \text{if $ n\neq 0$.}                  
    \end{aligned}
\right.
\end{align*}
	By \cref{cpt_gen_Rep}, there exists a compact generator $E$ such that the evaluation at $E$ of Notation \ref{nu_substacks} is given by the direct sums of evaluation at the elements $c_a$, $a \in I$.
	It follows that the quasi-compact stack $\mathbf{Rep}_{k}^{\nu}(\I)$ of \cref{quasi_compact_substacks} is such that we have an open immersion $\mathbf{Rep}_{k}^{\underline{r}}(\mathcal{I}) \subset \mathbf{Rep}_{k}^{\nu}(\I)$.
	Hence we have a diagram with pullback squares
\[\begin{tikzcd}[sep=small]
	{\mathbf{Rep}_{k}^{\underline{r}}(\I)} && {\mathbf{Rep}_{k}^{\nu}(\I)} \\
	\\
	{\mathbf{Rep}_{k}^{\underline{r}}(\I)} && {\mathbf{Rep}_{k}(\I)} \\
	\\
	{\prod_{a \in I} \mathrm{B}\mathbf{GL}_{r_a}} && {\prod_{a \in I} \mathbf{Perf}_k}
	\arrow[from=1-1, to=1-3]
	\arrow["id"', from=1-1, to=3-1]
	\arrow[from=1-3, to=3-3]
	\arrow[from=3-1, to=3-3]
	\arrow[from=3-1, to=5-1]
	\arrow[from=3-3, to=5-3]
	\arrow[from=5-1, to=5-3]
\end{tikzcd}\]
where the horizontal maps are quasi-compact.
In particular, $\mathbf{Rep}_{k}^{\underline{r}}(\mathcal{I})$ is quasi-compact.
\end{proof}

\begin{corollary}\label{good_Rep_rank}
In the setting of Construction \ref{fixed_rank_Stokes}, the algebraic stack $t_0 \mathbf{Rep}_{k}^{\underline{r}}(\I)$ admits a separated good moduli space $t_0 \Rep_{k}^{\underline{r}}(\I)$.
Moreover $t_0 \Rep_{k}^{\underline{r}}(\I)$ admits a derived enhancement $\Rep_{k}^{\underline{r}}(\I)$ which is a derived good moduli space for $\mathbf{Rep}_{k}^{\underline{r}}(\I)$.
\end{corollary}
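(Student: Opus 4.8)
The plan is to recognize $t_0\mathbf{Rep}_{k}^{\underline{r}}(\I)$ as a quasi-compact closed substack of $\REP$ and then invoke \cref{good_Rep} directly. First I would record that, by \cref{quasi_compact_Rep}, the derived stack $\mathbf{Rep}_{k}^{\underline{r}}(\I)$ is a quasi-compact open and closed substack of $\dREP$. Passing to classical truncations is harmless here: an open and closed immersion of $1$-Artin stacks remains an open and closed immersion after applying $t_0$, and quasi-compactness is a property of the underlying classical stack. Hence $t_0\mathbf{Rep}_{k}^{\underline{r}}(\I)$ is a quasi-compact open and closed — in particular closed — substack of $\REP = t_0\dREP$.

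With this identification in hand, the existence of a separated good moduli space $t_0\Rep_{k}^{\underline{r}}(\I)$ is immediate from \cref{good_Rep} applied with $\X = t_0\mathbf{Rep}_{k}^{\underline{r}}(\I)$, since that corollary produces a separated good moduli space for any quasi-compact closed substack of $\REP$. The same corollary simultaneously identifies the $\kappa$-points of $t_0\Rep_{k}^{\underline{r}}(\I)$ with the pseudo-perfect semisimple representations lying over $\X$, for $\Spec(\kappa)\to\Spec(k)$ a closed point with $\kappa$ algebraically closed.

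For the derived enhancement, the key observation is that $t_0\mathbf{Rep}_{k}^{\underline{r}}(\I)$ is, by its very construction in \cref{fixed_rank_Rep}, the classical truncation of the derived substack $\mathbf{Rep}_{k}^{\underline{r}}(\I)$ of $\dREP$, which is a geometric derived stack by \cref{quasi_compact_Rep}. The final clause of \cref{good_Rep}, which is itself an instance of \cref{derived_good}-(1), then yields a derived good moduli space $\Rep_{k}^{\underline{r}}(\I)$ for $\mathbf{Rep}_{k}^{\underline{r}}(\I)$ whose classical truncation recovers $t_0\Rep_{k}^{\underline{r}}(\I)$. Since essentially all the work is packaged into \cref{good_Rep} (which already bundles together the Hartogs' principle of \cref{Hartogs_Rep}, the criterion \cref{existence_good_separated}, and the derived enhancement of \cref{derived_good}), there is no genuine obstacle in this corollary; the only point demanding a moment's care is the compatibility of the ``quasi-compact, open and closed'' property with classical truncation, which is formal.
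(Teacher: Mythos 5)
Your argument is correct and is exactly the paper's proof, merely spelled out in more detail: the paper also deduces from \cref{quasi_compact_Rep} that $\mathbf{Rep}_{k}^{\underline{r}}(\I)$ is a quasi-compact open and closed substack of $\dREP$ and then applies \cref{good_Rep} (including its final clause for the derived enhancement). Your added remark on the compatibility of quasi-compactness and open-and-closed immersions with classical truncation is a harmless elaboration of what the paper leaves implicit.
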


\begin{proof}
It follows directly from \cref{quasi_compact_Rep} that we can apply \cref{good_Rep}.
\end{proof}

	Let $\Spec(\kappa) \to \Spec(k)$ be a closed point with $\kappa$ algebraically closed.
	We have the following 
\begin{theorem}\label{good_Rep_theorem}
	Let $\I \in \Cat_\infty^\omega$.
	The algebraic stack $\REP$ admits a good moduli space $\underline{\Rep}_k(\I)^\heartsuit$ whose $\kappa$-points parametrize semisimple representations of finite-dimensional $\kappa$-vector spaces.
	Moreover $\underline{\Rep}_k(\I)^\heartsuit$ admits a derived enhancement $\underline{\Rep}_k(\I)^{[0,0]}$ which is a derived good moduli space for $\mathbf{Rep}_{k}(\I)^{[0,0]}$.
\end{theorem}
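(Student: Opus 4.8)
The plan is to reduce the global statement to the quasi-compact pieces $\mathbf{Rep}_k^{\underline{r}}(\I)$ already handled in \cref{good_Rep_rank}, by first exhibiting $\REP$ as a disjoint union of them and then gluing the resulting good moduli spaces. The whole stack is not quasi-compact, so \cref{existence_good_separated} cannot be applied to it directly; the content is to organize it as a coproduct of the quasi-compact strata.

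First I would establish the decomposition
\[
\REP \simeq \bigsqcup_{\underline{r}} t_0\mathbf{Rep}_k^{\underline{r}}(\I),
\]
the coproduct running over all rank functions $\underline{r}=\{r_a\}_{a\in I}\subset\N$. This follows from the pullback squares of \cref{fixed_rank_Rep}: since $I$ is finite (\cref{cpt_gen_Rep}) and $\mathbf{Vect}_k=\bigsqcup_{n\in\N}\mathrm{B}\mathbf{GL}_n$, the product $\prod_{a\in I}\mathbf{Vect}_k$ breaks up as the coproduct $\bigsqcup_{\underline{r}}\prod_{a\in I}\mathrm{B}\mathbf{GL}_{r_a}$. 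Pulling back the product-of-evaluations map $\dREP\to\prod_{a\in I}\mathbf{Vect}_k$ along this open-and-closed decomposition of its target realizes $\dREP$ as the coproduct of the strata $\mathbf{Rep}_k^{\underline{r}}(\I)$; passing to classical truncations, which commutes with coproducts, yields the displayed decomposition. Each summand is a quasi-compact open-and-closed substack by \cref{quasi_compact_Rep}.

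Next I would glue. By \cref{good_Rep_rank} each $t_0\mathbf{Rep}_k^{\underline{r}}(\I)$ carries a separated good moduli space $q_{\underline{r}}\colon t_0\mathbf{Rep}_k^{\underline{r}}(\I)\to t_0\Rep_k^{\underline{r}}(\I)$, so I set
\[
\underline{\Rep}_k(\I)^\heartsuit\coloneqq\bigsqcup_{\underline{r}}t_0\Rep_k^{\underline{r}}(\I),\qquad q\coloneqq\bigsqcup_{\underline{r}}q_{\underline{r}},
\]
and check that $q$ is a good moduli space straight from the definition. A quasi-compact open of the target meets only finitely many components, so the coproduct of the qcqs morphisms $q_{\underline{r}}$ is again qcqs; since $\QCoh$ of a coproduct is the product of the $\QCoh$'s, one has $q_*\simeq\prod_{\underline{r}}(q_{\underline{r}})_*$, which is $t$-exact because each factor is, and $q_*\OO_{\REP}\simeq\prod_{\underline{r}}(q_{\underline{r}})_*\OO\simeq\prod_{\underline{r}}\OO\simeq\OO$. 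The derived enhancement $\underline{\Rep}_k(\I)^{[0,0]}\coloneqq\bigsqcup_{\underline{r}}\Rep_k^{\underline{r}}(\I)$ is assembled the same way from the enhancements produced by \cref{good_Rep_rank} via \cref{derived_good}, and is a derived good moduli space for $\mathbf{Rep}_k(\I)^{[0,0]}=\bigsqcup_{\underline{r}}\mathbf{Rep}_k^{\underline{r}}(\I)$.

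Finally, the description of $\kappa$-points is inherited componentwise from \cref{good_Rep}: each $\kappa$-point of $\underline{\Rep}_k(\I)^\heartsuit$ corresponds, through \cref{good_moduli_properties}-(4) and \cref{closed_point_is_semisimple}, to a pseudo-perfect semisimple object of $\Rep_\kappa(\I)^\heartsuit$, which by \cref{locally_free_valued} is precisely a semisimple representation valued in finite-dimensional $\kappa$-vector spaces. The main point requiring care is not any isolated computation but the passage from the quasi-compact strata to the non-quasi-compact total stack: one must confirm that, although $\REP$ itself is not quasi-compact, the assembled morphism $q$ remains qcqs, so that the coproduct of the separate good moduli spaces genuinely satisfies the defining conditions of a good moduli space.
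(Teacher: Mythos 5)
Your proposal is correct and follows essentially the same route as the paper: decompose $\mathbf{Rep}_k(\I)^{[0,0]}$ into the open-and-closed rank strata $\mathbf{Rep}_k^{\underline{r}}(\I)$, apply \cref{good_Rep_rank} to each, assemble the coproduct, and obtain the $\kappa$-point description from \cref{good_moduli_properties}-(4) and \cref{closed_point_is_semisimple}. The only difference is that you spell out the verification that a coproduct of good moduli spaces is again a good moduli space, a step the paper's proof asserts without comment.
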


\begin{proof}
	In the setting of \cref{good_Rep_rank}, the morphism
\[
q \colon \mathbf{Rep}_{k}(\I) \simeq \bigsqcup_{\underline{r} \in \N^I} \mathbf{Rep}_{k}^{\underline{r}}(\I) \to \bigsqcup_{\underline{r} \in \N^I}\Rep_{k}^{\underline{r}}(\I)
\]
is a derived good moduli space.
	The result then follows by combining \cref{derived_good} with \cref{good_moduli_properties}-(4) and \cref{closed_point_is_semisimple}.
\end{proof}

\subsection{Application: constructible sheaves}
In this paragraph we recall the exodromy equivalence as appearing in \cite{Exodromy, exodromyconicality} and the construction of the stack of constructible sheaves.

\begin{notation}
	Throughout this section we will work with hypersheaves instead of sheaves.
	In the application we are interested in, there is no difference between the two notions.
	Since there is no risk of confusion, we will not make any reference to this in our notation.
\end{notation}

\begin{definition}
	A stratified space is the data of a continuous map $\rho \colon X \to P$ where $X$ is a topological space and $P$ is a poset endowed with the Alexandroff topology.
	Abusing notations, we will often refer to a stratified space as a pair $(X,P)$.
	For $p \in P$, the $p$-stratum of $X$ is $X_p \coloneqq \rho^{-1}(p) \subset X$.
\end{definition}

\begin{definition}
	Let $(X,P)$ be a stratified space and let $\E \in \Cat_\infty$. 
	For $p \in P$, denote by $i_p \colon X_p \to X$ the inclusion.
	An object $F \in \Sh(X; \E)$ is constructible if $i_p^\ast F$ is locally constant for every $p \in P$.
	We denote by
	\[
	\Cons_P(X; \E) \subset \Sh(X ; \E)
	\]
	the full subcategory spanned by constructible sheaves.
\end{definition}

\begin{definition}
Let $\D \in \Cat_\infty$.
An object $d \in \D$ is atomic if the functor
\[
\Map_{\D} (d, -) \colon \D \to \mathrm{Spc}
\]
commutes with colimits. We denote by $\D^{at}\subset \D$ the full subcategory spanned by atomic objects.
\end{definition}

\begin{definition}
An $\infty$-category $\D \in \Cat_\infty$ is atomically generated if the unique colimit preserving functor
$\Fun((\D^{at})^{op}, \mathrm{Spc}) \to \D$
extending $\D^{at}\subset \D$ is an equivalence.
\end{definition}

\begin{definition}[{\cite[Definition 3.5]{CJ}}]\label{def_exodromic}
A stratified space $\rho \colon X \to P$ is exodromic if the following conditions ares satisfied:
\begin{enumerate}\itemsep=0.2cm
	\item the $\infty$-category $\Cons_P(X; \mathrm{Spc})$ is atomically generated;
	\item the subcategory $\Cons_P(X; \mathrm{Spc}) \subset \Sh(X)$ is closed under both limits and colimits;
	\item the pullback functor $\rho^{\ast}\colon \Fun(P,\mathrm{Spc}) \to \Cons^{\hyp}_P(X)$ preserves limits.
\end{enumerate}
\end{definition}

\begin{example}\label{exodromic_example}
	\cref{def_exodromic} enjoy many stability properties (see \cite[Theorem 5.1.7]{exodromyconicality}).
	In particular, any stratified space locally admitting a conically stratified refinement with locally weakly contractible strata is exodromic.
	Example of such are real algebraic varieties with finite stratification by Zariski locally closed subsets, and subanalytic spaces with locally finite stratification by subanalytic subset (\cite[Theorem 5.3.9 \& Theorem 5.3.13]{exodromyconicality}).
\end{example}

\begin{remark}[Exodromy equivalence]\label{exodromy_formal}
It follows directly from \cref{def_exodromic} that for
\[
\Pi_\infty(X,P) \coloneqq (\Cons_P(X; \mathrm{Spc})^{at})^{op}
\]
there is an equivalence
\[
\Fun(\Pi_\infty(X,P), \mathrm{Spc}) \simeq \Cons_P(X; \mathrm{Spc})
\]
Moreover \cite[Corollary 4.1.15]{exodromyconicality} shows that this equivalence extends to any compactly assembled $\infty$-category $\E$, that is: there is an equivalence
\[
\Fun(\Pi_\infty(X,P), \E) \simeq \Cons_P(X; \E)
\]
\end{remark}

\begin{remark}
The exodromy equivalence of \cref{exodromy_formal} is a formal statement that follows directly from the definitions.
The exodromy equivalence of \cite{Exodromy} shows that the $\infty$-category $\Pi_\infty(X,P)$ is identified with Lurie's simplicial model for exit path's when $(X,P)$ is a conically stratified space (\cite[Definition A.6.2]{HA}) with locally weakly contractible strata.
See \cite[Subsection 5.6]{exodromyconicality} for more details.
\end{remark}

\begin{definition}[{\cite[Definition 2.2.1]{Exodromy}}]\label{finite_strat}
Let $\left(X,P\right)$ be a conically stratified space.
\begin{itemize}\itemsep=0.2cm
    \item[(i)] We say that a stratified space $\left(X,P\right)$ is categorically compact if $\Exit\left(X,P\right)$ is a compact object in $\Cat_\infty$.
    \item[(ii)] We say that $\left(X,P\right)$ is locally categorically compact if $X$ admits a fundamental system of open subsets $U$ such that $\left(U,P\right)$ is categorically compact.
\end{itemize}
\end{definition}

\begin{example}[{\cref{exodromic_example}} continued]
Real algebraic varieties with finite stratification by Zariski locally closed subsets are exodromic, categorically compact and locally categorically compact (\cite[Theorem 5.3.13]{exodromyconicality}).
\end{example}

\begin{example}
Compact subanalytic spaces with locally finite stratification by subanalytic subset are exodromic, categorically compact and locally categorically compact (\cite[Theorem 5.3.9]{exodromyconicality}).
\end{example}

\begin{proposition}
Let $(X, P)$ be an exodromic stratified space.
If $(X,P)$ is categorically compact, then $\Cons_P(X; \Mod_k)\in \Prlo_k$ is of finite type. 
\end{proposition}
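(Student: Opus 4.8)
The plan is to reduce the assertion to the background fact, recorded in the introduction, that $\Fun(\I,\Mod_k)$ is of finite type for every $\I \in \Cat_\infty^\omega$. First I would apply the exodromy equivalence of \cref{exodromy_formal}: the category $\Mod_k$ is compactly generated, hence compactly assembled, so that equivalence specializes to a $k$-linear equivalence
\[
\Cons_P(X; \Mod_k) \simeq \Fun(\Pi_\infty(X,P), \Mod_k) = \Rep_k(\Pi_\infty(X,P)),
\]
where $\Pi_\infty(X,P) = (\Cons_P(X;\Spc)^{at})^{op}$. Thus everything comes down to showing that $\Pi_\infty(X,P)$ is a compact object of $\Cat_\infty$.

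For this I would use the standing hypothesis that $(X,P)$ is categorically compact, which by \cref{finite_strat} means exactly that Lurie's exit-path $\infty$-category $\Exit(X,P)$ lies in $\Cat_\infty^\omega$. The exodromy theorem of \cite{Exodromy} (see the remark following \cref{exodromy_formal}) identifies the abstractly defined $\Pi_\infty(X,P)$ with $\Exit(X,P)$ in the conically stratified, locally weakly contractible setting underlying \cref{finite_strat}; combining these gives $\Pi_\infty(X,P) \in \Cat_\infty^\omega$. Setting $\I \coloneqq \Pi_\infty(X,P)$, the finite-type property of $\Rep_k(\I)$ then yields the claim.

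If one prefers to spell out this last step rather than cite the introduction, the cleanest route is the base-change identity $\Fun(\I,\Mod_k) \simeq \Fun(\I,\Spc) \otimes \Mod_k$ in $\mathrm{Pr}^{\mathrm{L}}$, together with the facts that the free cocompletion $\PSh(-)$ sends compact $\infty$-categories to compact objects of $\mathrm{Pr}^{\mathrm{L}}$ and that the $k$-linearization functor $- \otimes \Mod_k$ preserves compact objects. The only genuinely delicate input in the whole argument is the identification $\Pi_\infty(X,P) \simeq \Exit(X,P)$, i.e. matching the intrinsic description of the homotopy category of atomic constructible sheaves with Lurie's exit-path model; once this (and the exodromy equivalence) are granted, the proposition is formal.
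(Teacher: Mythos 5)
Your proposal is correct and follows essentially the same route as the paper: the paper's proof is precisely the combination of the exodromy equivalence (\cref{exodromy_formal}) with the finite-type statement for representation categories of compact $\infty$-categories, which it cites as \cite[Lemma 7.1.9-(3)]{Exodromy}. Your additional care about identifying $\Pi_\infty(X,P)$ with $\Exit(X,P)$ (needed because categorical compactness is phrased via $\Exit$) and your sketch of why $\Fun(\I,\Mod_k)$ is compact in $\Prlo_k$ are both consistent with the references the paper invokes.
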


\begin{proof}
This is \cite[Lemma 7.1.9-(3)]{Exodromy} combined with \cref{exodromy_formal}.
\end{proof}

\begin{recollection}
Let $(X,P)$ be a stratified space. Consider the prestack
\[
\mathbf{Cons}_P(X) \colon \dAff_k^{op} \to \mathrm{Spc}
\]
\[
\Spec(A) \mapsto \Cons_{P, \omega}(X; \Mod_A)^{\simeq} \coloneqq \Fun(\Pi_\infty(X,P), \Perf(A))^{\simeq}
\]
sending a morphism $\Spec(A) \to \Spec(B)$ over
$k$ to the map on maximal sub-$\infty$-groupoids induced by
\[
(-) \otimes_B A \colon \Cons_{P,\omega}(X; \Mod_B) \to \Cons_{P,\omega}(X; \Mod_A).
\]
Since $\Mod(-)$ and $\Perf(-)$ satisfies hyper-descent (\cite[Proposition 2.8.4.2-(10) \& Corollary D.6.3.3]{SAG}), the above assignment defines a stack.
\end{recollection}

\begin{theorem}\label{Cons_Rep}
Let $(X, P)$ be an exodromic stratified space. If
$(X,P)$ is categorically compact, then we have an equivalence in $\dSt_k$ 
\[\mathbf{Cons}_P(X) \simeq \M_{\Cons_P(X; \Mod_k)} = \mathbf{Rep}_k(\Pi_\infty(X,P)) \ .
\]
In particular, $\mathbf{Cons}_P(X)$ is a locally geometric derived stack, locally of finite presentation.
\end{theorem}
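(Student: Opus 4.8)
The plan is to identify both prestacks in the statement by unwinding their definitions through the exodromy equivalence. First I would invoke \cref{exodromy_formal} with coefficients $\E = \Mod_k$ to get an equivalence of $\infty$-categories
\[
\Cons_P(X; \Mod_k) \simeq \Fun(\Pi_\infty(X,P), \Mod_k) = \Rep_k(\Pi_\infty(X,P)).
\]
Under the categorical compactness hypothesis, $\Pi_\infty(X,P)$ is identified with the (compact) exit-path $\infty$-category, so $\Pi_\infty(X,P) \in \Cat_\infty^\omega$; this makes the moduli-of-representations notation of \cref{generalities} available, and the right-hand equality $\M_{\Cons_P(X; \Mod_k)} = \mathbf{Rep}_k(\Pi_\infty(X,P))$ is then purely notational, both sides being the moduli of pseudo-perfect objects of this category. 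The preceding proposition moreover records that $\C \coloneqq \Cons_P(X; \Mod_k)$ is of finite type.

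The genuine content is the left-hand equivalence $\mathbf{Cons}_P(X) \simeq \M_{\C}$, which I would establish by comparing $A$-points for $\Spec(A) \in \dAff_k$. Using \cref{tensor_product} and \cref{tens_prod_Rep} together with the coefficient version of \cref{exodromy_formal}, valid for any compactly assembled $\E$ and hence for $\E = \Mod_A$, I would compute
\[
\C_A \simeq \C \otimes_k \Mod_A \simeq \Fun(\Pi_\infty(X,P), \Mod_A).
\]
It then remains to match the finiteness conditions: by \cref{pseudo_perf_Rep}, a family $F \in \Fun(\Pi_\infty(X,P), \Mod_A)$ is pseudo-perfect exactly when $F(c_a) \in \Perf(A)$ on a finite set of representatives $\{c_a\}$ of the equivalence classes of $\Pi_\infty(X,P)$ (\cref{cpt_gen_Rep}). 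Since every object is equivalent to some $c_a$ and membership in $\Perf(A)$ is stable under equivalence, this is the same as $F$ landing in $\Perf(A)$ at every object, i.e. $F \in \Fun(\Pi_\infty(X,P), \Perf(A))$. Passing to maximal subgroupoids, this is precisely the groupoid defining $\mathbf{Cons}_P(X)(\Spec A)$, so the two prestacks agree on $A$-points.

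Finally, the geometricity assertion follows formally: once the equivalence of prestacks is established and $\C$ is known to be of finite type, \cite[Theorem I.5.9]{HDR} gives that $\M_{\C}$, hence $\mathbf{Cons}_P(X)$, is a locally geometric derived stack locally of finite presentation over $k$. The step I expect to require the most care is the compatibility of the exodromy equivalence with base change in the coefficients: one must check that the equivalences $\C_A \simeq \Fun(\Pi_\infty(X,P), \Mod_A)$ assemble naturally in $A$ and intertwine the two families of pullback functors, so that the pointwise identification promotes to an equivalence of stacks. This is exactly where one uses the extension of exodromy to arbitrary compactly assembled coefficients rather than merely $\mathrm{Spc}$ or $\Mod_k$.
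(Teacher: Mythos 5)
Your proposal is correct and matches the intended argument: the paper itself proves this theorem by citing \cite[Theorem 5.4.9]{exodromyconicality} (see also \cite[Theorem 7.1.12]{Exodromy}), and what you have written is precisely a reconstruction of that argument — exodromy with compactly assembled coefficients to identify $\C_A$ with $\Fun(\Pi_\infty(X,P),\Mod_A)$ naturally in $A$, matching of pseudo-perfectness with pointwise perfectness via the finite set of generators, and then \cite[Theorem I.5.9]{HDR} for geometricity. The point you flag as delicate (naturality of the coefficient-wise exodromy equivalence in $A$) is indeed the substantive input supplied by the cited reference.
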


\begin{proof}
This is (the proof of) \cite[Theorem 5.4.9]{exodromyconicality}. See also (the proof of) \cite[Theorem 7.1.12]{Exodromy}.
\end{proof}

\begin{notation}\label{notation_Cons}
	Let $(X, P)$ be an exodromic categorically compact stratified space
	We denote by
\[
\mathbf{Cons}_P^{[0,0]}(X) \subset \mathbf{Cons}_P(X)
\]
the open substack corresponding to 
\[
\mathbf{Rep}^{[0,0]}_k(\Pi_\infty(X,P)) \subset \mathbf{Rep}_k(\Pi_\infty(X,P))
\]
under the equivalence of \cref{Cons_Rep}. As customary, we denote by $\mathbf{Cons}_P^\heartsuit (X)$ the classical truncation of $\mathbf{Cons}_P^{[0,0]}(X)$.
\end{notation}

Let $\Spec(\kappa) \to \Spec(k)$ be a closed point with $\kappa$ algebraically closed.
We have the following

\begin{theorem}\label{good_rank_Cons}
	In the setting of \cref{notation_Cons}, the algebraic stack $\mathbf{Cons}_P^{\heartsuit}(X)$ admits a separated good moduli space $\Cons_P^{\heartsuit}(X)$ whose $\kappa$-points parametrize semisimple objects of $\Cons_P(X; \Mod_\kappa^\heartsuit)$ with finite-dimensional stalks.
	Moreover $\Cons_P^{\heartsuit}(X)$ admits a derived enhancement $\Cons_P^{[0,0]}(X)$ which is a derived good moduli space for $\mathbf{Cons}_P^{[0,0]}(X)$.
\end{theorem}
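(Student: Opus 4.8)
The strategy is to transport the conclusion of \cref{good_Rep_theorem} along the exodromy equivalence. By \cref{Cons_Rep} there is an equivalence $\mathbf{Cons}_P(X) \simeq \mathbf{Rep}_k(\Pi_\infty(X,P))$ in $\dSt_k$, and by \cref{notation_Cons} it restricts to equivalences $\mathbf{Cons}_P^{[0,0]}(X) \simeq \mathbf{Rep}_k(\Pi_\infty(X,P))^{[0,0]}$ and $\mathbf{Cons}_P^\heartsuit(X) \simeq \mathbf{Rep}_k(\Pi_\infty(X,P))^\heartsuit$. Setting $\I \coloneqq \Pi_\infty(X,P)$, it thus suffices to verify that $\I \in \Cat_\infty^\omega$, for then the statement is literally \cref{good_Rep_theorem} read through these equivalences. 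This last point is exactly what categorical compactness provides: by definition $\Exit(X,P)$ is a compact object of $\Cat_\infty$, and since $\Pi_\infty(X,P)$ is identified with $\Exit(X,P)$ up to taking opposites (the discussion following \cref{exodromy_formal}) while $(-)^{op}$ is an autoequivalence of $\Cat_\infty$, we obtain $\Pi_\infty(X,P) \in \Cat_\infty^\omega$.

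Granting this, \cref{good_Rep_theorem} applied to $\I = \Pi_\infty(X,P)$ yields the good moduli space, which we rename $\Cons_P^\heartsuit(X)$, together with its derived enhancement $\Cons_P^{[0,0]}(X)$ — a derived good moduli space for $\mathbf{Cons}_P^{[0,0]}(X)$ obtained by transporting $\underline{\Rep}_k(\I)^{[0,0]}$ and invoking \cref{derived_good}. For \emph{separatedness} I would use the rank decomposition: under the equivalence, $\mathbf{Cons}_P^\heartsuit(X) \simeq \bigsqcup_{\underline r \in \N^I} t_0 \mathbf{Rep}_k^{\underline r}(\I)$, and by \cref{good_Rep_rank} together with \cref{good_Rep} each summand carries a separated good moduli space; since a disjoint union of separated algebraic spaces is again separated, the global good moduli space is separated.

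It then remains to translate the description of the $\kappa$-points. \cref{good_Rep_theorem} describes the closed $\kappa$-points of $\mathbf{Rep}_k(\I)^\heartsuit$ as the semisimple representations $\I \to \Vect_\kappa$ with finite-dimensional values. Running the exodromy equivalence of \cref{exodromy_formal} with coefficients in $\Mod_\kappa$ gives an equivalence of hearts $\Cons_P(X;\Mod_\kappa)^\heartsuit \simeq \Fun(\Pi_\infty(X,P), \Vect_\kappa)$ under which the value of a representation at an object of $\Pi_\infty(X,P)$ is the corresponding stalk of the constructible sheaf; hence finite-dimensionality of all values matches finite-dimensionality of all stalks. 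Being an equivalence of abelian categories it preserves subobject lattices and therefore semisimplicity, which gives the asserted characterization of the $\kappa$-points of $\Cons_P^\heartsuit(X)$.

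I do not expect a genuine obstacle here, since the whole argument is a transport of structure along \cref{Cons_Rep}; the only inputs beyond bookkeeping are the compactness of $\Pi_\infty(X,P)$, immediate from categorical compactness, and the compatibility of the exodromy equivalence both with the standard $t$-structure — so that hearts and stalks correspond — and with the notion of semisimplicity. The most delicate step, should one want full rigor, is checking that these compatibilities persist with $\Mod_\kappa$-coefficients, but this is already built into \cref{exodromy_formal} and \cref{Cons_Rep}.
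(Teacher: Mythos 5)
Your proposal is correct and follows essentially the same route as the paper, whose proof of this statement is simply the observation that it follows directly from \cref{good_Rep_theorem} via the identification \(\mathbf{Cons}_P(X) \simeq \mathbf{Rep}_k(\Pi_\infty(X,P))\) of \cref{Cons_Rep}. Your elaboration of the bookkeeping (compactness of \(\Pi_\infty(X,P)\) from categorical compactness, separatedness via the rank decomposition, and the translation of semisimplicity and stalks through the exodromy equivalence) is accurate and fills in details the paper leaves implicit.
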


\begin{proof}
	This follows directly from \cref{good_Rep_theorem}.
\end{proof}

\section{Good moduli spaces for Stokes functors}\label{Stokes_section}
In this paragraph we prove that the main results of \cref{good_Rep_section} also holds for the moduli of Stokes functors. Since Stokes functors are defined as a full subcategory of a category of representations, the results in \cref{good_Rep_section} cannot be applied directly, although very little changes are needed.

\subsection{Stokes functors}
We recollect in this paragraph the definition and some results about Stokes functors as treated in \cite{Abstract_Stokes, Geometric_Stokes}.
In these two seminal papers several equivalent definitions of Stokes functors are presented.
We have chosen the one that is more explicit and for which it is easier to describe the moduli space we are interested in.
Stokes functors will be defined as a full subcategory of a category of functors, satisfying two properties: Stokes functors are \textit{punctually split} (\cref{def_PS}) and \textit{cocartesian} (\cref{def_cocart}).

\begin{recollection}\label{recollection_PS}
Let $\I \to \X$ be a cocartesian fibration in posets.
Consider the underlying fibration in Sets, denoted $\I^{set} \to \X$ and informally described by forgetting the order in each fiber.
There is a natural functor $i_{\I}\colon \I^{set} \to \I$ above $\X$.\\
Consider a cocomplete $\infty$-category $\E$.
For $x \in \X$ we can consider the inclusion $j_x\colon \I_x \hookrightarrow \I$ of the fiber above $x$ and the morphism $i_{\I_x}\colon \I_x^{set} \to \I_x$.
The restriction functor
\[
i_{\I_x}^\ast  \colon \Fun(\I_x, \E) \to \Fun(\I_x^{set}, \E)
\]
admits a left adjoint
\[
i_{\I_x,!}\colon \Fun(\I_x^{set}, \E) \to \Fun(\I_x, \E)
\]
defined by left Kan extension. Similarly, there is an adjunction $i_{\I,!} \dashv i_{\I}^\ast$.
\end{recollection}

\begin{definition}\label{def_PS}
In the setting of Recollection \ref{recollection_PS}, we say that $F \in \Fun(\I, \E)$ is:
\begin{enumerate}\itemsep=0.2cm
    \item punctually split at $x$ if $j_x^\ast F$ belongs to the essential image of $i_{\I_x,!}\colon \Fun(\I_x^{set}, \E) \to \Fun(\I_x, \E)$;
    \item punctually split if it is so at any point $x$ of $\X$;
    \item split if it belongs to the essential image of $i_{\I,!}\colon \Fun(\I^{set}, \E) \to \Fun(\I, \E)$
\end{enumerate}
\end{definition}

The cocartesian condition is a property about "parallel transport" along a path.
\begin{recollection}\label{recollection_cocart}
Let $\I \to \X$ be a cocartesian fibration in posets.
Consider a morphism $\gamma\colon x \to y$ in $\X$. We can the choose (uniquely up to homotopy) a cocartesian lift $\I_x \to \I_y$, that we still (abusively) denote $\gamma$.
This morphism fits in a \textit{non commutative} diagram
\[\begin{tikzcd}[sep=small]
	{\I_x} && {\I_y} \\
	\\
	& \I
	\arrow["\gamma", from=1-1, to=1-3]
	\arrow["{j_x}"', hook, from=1-1, to=3-2]
	\arrow["{j_y}", hook', from=1-3, to=3-2]
\end{tikzcd}\]
which comes equipped with a natural transformation $s \colon j_x \to j_y \circ \gamma$.\\
Consider a cocomplete $\infty$-category $\E$.
Then applying $\Fun(-,\E)$ to $s$ gives a natural transformation $s^\ast \colon j_x^\ast \to \gamma^\ast j_y^\ast$.
Passing to the induced Beck-Chevalley transformation we get a natural transformation
\[
\eta_\gamma \colon \gamma_! j_x^\ast \to j_y^\ast.
\]
\end{recollection}

\begin{definition}\label{def_cocart}
	In the setting of Recollection \ref{recollection_cocart}, we say that $F \in \Fun(\I, \E)$ is cocartesian at $\gamma \colon x \to y$ if the natural transformation $\eta_\gamma F  \colon \gamma_! j_x^\ast F \to j_y^\ast F$ is an equivalence.
	We say that $F$ is cocartesian if it is so at any morphism $\gamma \colon x \to y$ of $\X$.
\end{definition}

	We can now state the definition of Stokes functors:
\begin{definition}
	Let $\I \to \X$ be a cocartesian fibration in posets and $\E$ be a cocomplete $\infty$-category.
	The $\infty$-category of Stokes functors is the full subcategory 
\[
\St_{\I, \E} \subset\Fun(\I, \E)
\]
spanned by punctually split cocartesian functors.
\end{definition}

\begin{notation}
	If $A$ is a simplicial commutative ring, we will use the shortcut $\St_{\I, A}$ for $\St_{\I, \Mod_A}$.
\end{notation}

\begin{remark}\label{t_structure_Stokes}
Let $\E$ be a stable presentable $\infty$-category equipped with a $t$-structure $\tau$.
It is shown in \cite[Proposition 5.7.11]{Abstract_Stokes} that if $\mathrm{St}_{\I, \E} \subset\Fun(\I, \E)$ is closed under limits and colimits, the the standard $t$-structure $\taust$ on $\Fun(\I, \E)$ restricts to a $t$-structure on $\mathrm{St}_{\I, \E}$.
The restricted $t$-structure on $\mathrm{St}_{\I, \E}$ will also be denoted by $\taust$.
\end{remark}

\begin{proposition}\label{push_Stokes_heart}
Let $f \colon \E \to \E'$ be in $\Pr^{\mathrm{L}}$.
Let $\I \to \X$ be a cocartesian fibration in finite posets.
Suppose that $\E$ and $\E'$ are equipped with $t$-structures and that $f$ preserve coconnective objects.
Assume that $\mathrm{St}_{\I, \E} \subset\Fun(\I, \E)$ and $\mathrm{St}_{\I, \E} \subset\Fun(\I, \E)$ are closed under limits and colimits.
The functor
\[
\tau_{\geq 0} \circ f \circ -  \colon \Fun(\I,\E'^\heartsuit) \to \Fun(\I, \E^\heartsuit)
\]
preserves Stokes functors.
\end{proposition}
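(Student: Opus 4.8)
The plan is to check the two defining conditions of a Stokes functor — punctual splitness (\cref{def_PS}) and cocartesianness (\cref{def_cocart}) — separately, after first peeling off the truncation. Write $T \coloneqq \tau_{\geq 0}\circ f\circ-$. Given a Stokes functor $F\in\Fun(\I,(\E')^\heartsuit)$, I would first post-compose levelwise to form $G \coloneqq f\circ F\colon\I\to\E$. Since $f$ lies in $\Pr^{\mathrm{L}}$ it preserves colimits, and post-composition by a colimit-preserving functor commutes with the left Kan extensions $i_{\I_x,!}$ and $\gamma_!$ and with the restrictions $j_x^\ast, j_y^\ast$ out of which punctual splitness and the Beck--Chevalley transformation $\eta_\gamma$ are built; by naturality of the mate construction it also carries the equivalence $\eta_\gamma(F)$ to $\eta_\gamma(G)$. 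Hence $G$ is again punctually split and cocartesian. Because $F$ takes values in $(\E')^\heartsuit\subset(\E')_{\leq 0}$ and $f$ preserves coconnective objects, $G$ takes values in $\E_{\leq 0}$, so $TF=\tau_{\geq 0}\circ G$ lands in $\E^\heartsuit$ levelwise. This reduces the proposition to the statement: \emph{if $G\colon\I\to\E_{\leq 0}$ is a coconnective-valued punctually split cocartesian functor, then $\tau_{\geq 0}\circ G$ is punctually split and cocartesian.}

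The crux is that, over finite posets, both conditions are encoded by \emph{finite direct sums}, which $\tau_{\geq 0}$ preserves: being right adjoint to $\E_{\geq 0}\hookrightarrow\E$ it preserves finite products, and in the stable category $\E$ these coincide with finite coproducts. For punctual splitness at $x$, write $j_x^\ast G\simeq i_{\I_x,!}(M_x)$ with $M_x\colon\I_x^{set}\to\E$ (whose values are retracts of values of $G$, hence coconnective). As $\I_x$ is finite and $\I_x^{set}$ discrete, $i_{\I_x,!}$ is computed pointwise by finite coproducts, so a finite-coproduct–preserving functor commutes with it; applying $\tau_{\geq 0}$ levelwise yields $\tau_{\geq 0}\circ j_x^\ast G\simeq i_{\I_x,!}(\tau_{\geq 0}\circ M_x)$. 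Since the restriction $j_x^\ast$ trivially commutes with the levelwise $\tau_{\geq 0}$, this exhibits $j_x^\ast(\tau_{\geq 0}\circ G)$ in the essential image of $i_{\I_x,!}$, so $\tau_{\geq 0}\circ G$ is punctually split at $x$.

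For cocartesianness I would use splitness to convert $\gamma_!$ into finite-coproduct data. The cocartesian lift $\gamma\colon\I_x\to\I_y$ induces a map $\gamma^{set}\colon\I_x^{set}\to\I_y^{set}$ with $i_{\I_y}\circ\gamma^{set}=\gamma\circ i_{\I_x}$, whence $\gamma_!\,i_{\I_x,!}\simeq i_{\I_y,!}\,\gamma^{set}_!$. Applied to $j_x^\ast G\simeq i_{\I_x,!}(M_x)$ this gives $\gamma_! j_x^\ast G\simeq i_{\I_y,!}(\gamma^{set}_!M_x)$, in which $\gamma^{set}_!$ is again a pointwise finite coproduct along the finite fibers of $\gamma^{set}$. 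Thus $\tau_{\geq 0}$ commutes with both $i_{\I_y,!}$ and $\gamma^{set}_!$, hence with $\gamma_!$ on the split functor $j_x^\ast G$. Combining this with the tautological commutation of $\tau_{\geq 0}$ with $j_x^\ast$ and $j_y^\ast$, and applying the functor $\tau_{\geq 0}$ to the equivalence $\eta_\gamma(G)\colon\gamma_! j_x^\ast G\xrightarrow{\sim}j_y^\ast G$, identifies $\eta_\gamma(\tau_{\geq 0}\circ G)$ with $\tau_{\geq 0}\big(\eta_\gamma(G)\big)$; as the latter is an equivalence, so is the former, and $\tau_{\geq 0}\circ G$ is cocartesian.

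The main obstacle is exactly this last identification: a priori $\tau_{\geq 0}$ is merely a right adjoint and commutes neither with the left Kan extension $\gamma_!$ nor with the Beck--Chevalley transformation $\eta_\gamma$, so one cannot simply truncate the cocartesian equivalence. The device that rescues the argument is that \emph{punctual} splitness rewrites $\gamma_! j_x^\ast G$ through the finite-coproduct functors $i_{\I_y,!}$ and $\gamma^{set}_!$, on which $\tau_{\geq 0}$ does commute; finiteness of the fibers $\I_x$ is used essentially here. What remains is the routine verification that the comparison provided by naturality of the mate construction is the canonical one, which is formal once the individual commutations above are in place.
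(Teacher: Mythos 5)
Your argument is correct, but it takes a genuinely different route from the paper's. The paper disposes of the statement by two citations: \cite[Proposition 5.6.1]{Abstract_Stokes}, which says that Stokes functors are stable under change of coefficients along any morphism of $\Pr^{\mathrm{L}}$ and so handles post-composition with $f$, and \cite[Proposition 5.7.11]{Abstract_Stokes}, which says that the standard $t$-structure restricts to $\mathrm{St}_{\I,\E}$ when the latter is closed under limits and colimits — equivalently, that the ambient truncation $\tau_{\geq 0}$ carries Stokes functors to Stokes functors; the heart-valued statement then follows by restricting along coconnectivity, exactly as in your first paragraph. You instead re-prove both ingredients by hand. The $f$-part is the standard mate argument. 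For the truncation part you exploit the hypothesis that the fibers are \emph{finite} posets: $i_{\I_x,!}$ and, after splitting, $\gamma_!\simeq i_{\I_y,!}\circ\gamma^{set}_!$ are pointwise finite coproducts, with which the right adjoint $\tau_{\geq 0}$ commutes since finite coproducts agree with finite products. This yields a self-contained, more elementary proof of the case actually needed, at the cost of using finiteness of the fibers essentially (the closure-under-limits-and-colimits hypothesis, which drives the cited Proposition 5.7.11, plays no role in your argument) and of the final mate verification you flag: concretely, $\eta_\gamma(\tau_{\geq 0}\circ G)$ factors as the canonical comparison $\gamma_!(\tau_{\geq 0}\circ j_x^\ast G)\to\tau_{\geq 0}\circ\gamma_!j_x^\ast G$ followed by $\tau_{\geq 0}(\eta_\gamma(G))$, and your finite-coproduct computation is precisely what shows the first arrow is an equivalence. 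With that compatibility spelled out, the proof is complete.
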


\begin{proof}
In \cite[Proposition 5.6.1]{Abstract_Stokes} it is shown that Stokes functors are stable under change of coefficients in $\Pr^{\mathrm{L}}$.
Combining this with \cite[Proposition 5.7.11]{Abstract_Stokes}, we see that
\[
\tau_{\geq 0} \circ f \circ -  \colon \Fun(\I, \E) \to \Fun(\I, \E'_{\geq 0})
\]
preserves Stokes functors.
Since by assumption $f$ respect coconnective objects, the above functor restricts to a functor
\[
\tau_{\geq 0} \circ f \circ -  \colon \Fun(\I,\E^\heartsuit) \to \Fun(\I, \E'^\heartsuit)
\]
that preserves Stokes functors.
\end{proof}

\begin{recollection}[{\cite[Notation 8.1.1]{Geometric_Stokes}}]
Let $\I \to \X$ be a cocartesian fibration in posets.
We can define a prestack via the following rule:
\[
\mathbf{St}_{\I, k} \colon \dAff_k \to \Spc
\]
\[
\Spec(A) \mapsto (\mathrm{St}_{\I, \Mod_A, \omega})^\simeq,
\]
where $(\mathrm{St}_{\I, A, \omega})^\simeq$ is the maximal $\infty$-groupoid of 
\[
\mathrm{St}_{\I, A, \omega} \coloneqq \mathrm{St}_{\I, A} \times_{\Fun(\I, \Mod_A)}\Fun(\I, \Perf(A)).
\]
Under suitable assumptions, \cite[Proposition 8.2.2]{Geometric_Stokes} shows that $\mathbf{St}_{\I, k} \simeq \M_{\\St_{\I, k}}$.
\end{recollection}

We will not recall here what the most general framework in which $\mathbf{St}_{\I, k} \simeq \M_{\\St_{\I, k}}$ holds true, but instead refer to \cite[Theorem 8.1.3]{Geometric_Stokes}.
We will instead specialize to the particular case of classical (ramified) Stokes functors, but the same arguments presented here work in the very general setting of \cite[Theorem 8.1.3]{Geometric_Stokes}).

\subsubsection{Sheaves of unramified irregular values}\label{unramified_section}

\begin{notation}\label{notation_divisors}
Let $X$ be a complex manifold and let $D \subset X$ be a strict normal crossing divisor in $X$.
Write $D$ as the union of its irreducible components
\[
D = \bigcup_{i=1}^n D_i
\]
We denote (abusively) by $(X,D)$ the topological stratified space with stratification given by depth of the intersection. That is, the stratified space
\[
\rho \colon X \to \Fun(\left\lbrace 1, \ldots n\right\rbrace), \Delta^1)
\]
\[
x \mapsto [i \to \delta_{x,i}]
\]
where $\delta_{x,i}= 1$ if $x \in D_i$ and $\delta_{x,i}= 0$ if $x \notin D_i$.
We will refer to $(X,D)$ as being a strict normal crossing pair.
\end{notation}

\begin{notation}
Let $(X,D)$ be a strict normal crossing pair.
For $I\subset\left\lbrace 1, \ldots n\right\rbrace)$, we put
\[
D_I  \coloneqq  \bigcap_{i\in I} D_i  \quad \text{ and } \quad  D_I^{\circ}  \coloneqq  \bigcap_{I\subsetneq J} D_I \setminus D_J   \ .
\]
We denote by $i_I \colon  D_I \hookrightarrow  X$ and $i_I^{\circ} \colon  D_I \hookrightarrow X$ the canonical inclusions.
\end{notation}

\begin{construction}[{\cite[Section 8.b]{Sabbah}}]\label{real_blow_up}
Let $(X,D)$ be a strict normal crossing pair.
Assume that $X$ admits a smooth compacitfication.
For $i=1, \ldots, n$, let $L(D_i)$ be the line bundle over $X$ corresponding to the sheaf $\OO_X(D_i)$ and let $S^1 L(D_i)$ be the associated circle bundle. 
Put
\[
S^1 L(D)  \coloneqq  \bigoplus_{i=1}^n S^1 L(D_i)   \ .
\]
Let $U\subset X$ be an open polydisc with coordinates $(z_1,\dots, z_n)$ and let $z_i=0$ be an equation of $D_i$ in $U$.
Let $\widetilde{X}_U \subset S^1 L(D)|_U$ be the closure of the image of $(z_i/|z_i|)_{1\leq i \leq n} \colon  U\setminus D \to S^1 L(D)$.
Then, the $\widetilde{X}_U$ are independent of the choices made and thus glue as a closed subspace $\widetilde{X} \subset S^1 L(D)$ called the \textit{real-blow up of $X$ along $D$}.
We denote by $\pi \colon  \widetilde{X}\to X$  the induced proper morphism and by $j \colon  X\setminus D \to \widetilde{X}$ the canonical open immersion.
For $I\subset \{1,\dots, n\}$ of cardinal $1 \leq k \leq n$, we put $\widetilde{D}_I \coloneqq  \pi^{-1}(D_I )$ and $\widetilde{D}_I^{\circ} \coloneqq  \pi^{-1}(D_I^{\circ} )$
and observe that the restriction 
\[
\pi|_{D_I^{\circ} }  \colon   \widetilde{D}_I^{\circ} \to D_I^{\circ}
\]
 is a $S^k$-bundle.
\end{construction}

\begin{recollection}[{\cite[Section 8.c]{Sabbah}}]\label{moderate_growth_sheaf}
Let $(X,D)$ be a strict normal crossing pair and put $U \coloneqq X\setminus D$.
Let $\pi \colon  \widetilde{X}\to X$  be the real blow-up along $D$ and let $j \colon U \hookrightarrow \widetilde{X}$ be the canonical inclusion.
We denote by $\AA^{\mathrm{mod}}_{\widetilde{X}} \subset  j_{\ast}\OO_{U}$ the sheaf of analytic functions with moderate growth along $D$.
By definition for every open subset $V \subset \widetilde{X}$, a section of $\AA^{\mathrm{mod}}_{\widetilde{X}}$ on $V$ is an analytic function $f \colon  V\cap U \to \mathbb{C}$ such that for every open subset $W \subset V$ with $D$ defined by $h=0$ in a neighborhood of $\pi(W)$, for every compact subset $K \subset W$, there exist  $C_K> 0$ and $N_K \in \mathbb{N}$ such that for every $z\in K\cap U$, we have 
\[
|f(z)|\leq C_K \cdot |h(z)|^{-N_K} \ .
\]
Let $(j_\ast \OO_U)^{\mathrm{lb}} \subset j_\ast \OO_U$ the subheaf of locally bounded function. Then $\AA^{\mathrm{mod}}_{\widetilde{X}}$ is a unitary sub-$(j_\ast \OO_U)^{\mathrm{lb}}$-algebra and $\AA^{\mathrm{mod}, \times}_{\widetilde{X}} \subset (j_\ast \OO_U)^{\mathrm{lb}}$ (\cite[Lemma 10.1.10]{Geometric_Stokes}).
\end{recollection}

\begin{recollection}[{\cite[Recollection 10.1.11]{Geometric_Stokes} \& \cite[Definition 9.2]{Sabbah}}]\label{order_general}
Let $(X,D)$ be a strict normal crossing pair and put $U \coloneqq X\setminus D$.
Let $\pi \colon  \widetilde{X}\to X$  be the real blow-up along $D$ and let $j \colon U \hookrightarrow \widetilde{X}$ be the canonical inclusion.
For $f,g\in j_{\ast}\OO_{U}$, we write 
\[
f\leq g \text{ if and only if } e^{f-g}\in \AA^{\mathrm{mod}}_{\widetilde{X}}  \ .
\]
By \cref{moderate_growth_sheaf}, the relation $\leq$ induces an order on $(j_{\ast}\OO_{U})/(j_{\ast}\OO_{U})^{\mathrm{lb}}$.
From now on, we view  $(j_{\ast}\OO_{U})/(j_{\ast}\OO_{U})^{\mathrm{lb}}$ as an object of 
$ \Sh^{\hyp}(\widetilde{X},\mathrm{Poset})$.
\end{recollection}

\begin{remark}[{\cite[Remark 10.1.11]{Geometric_Stokes}}]\label{meromorphic_order}
Viewing $\pi^{\ast}\OO_{X}(\ast D)$ inside $j_{\ast}\OO_{U}$, we have 
\[\pi^{\ast}\OO_{X}(\ast D)\cap  (j_{\ast}\OO_{U})^{\mathrm{lb}} = \pi^{\ast}\OO_{X}  \ .
\]
Hence, $\pi^{\ast}(\OO_{X}(\ast D)/\OO_{X})$ can be seen as a subsheaf of $(j_{\ast}\OO_{U})/(j_{\ast}\OO_{U})^{\mathrm{lb}}$.
From now on, we view it as an object of $ \Sh^{\hyp}(\widetilde{X},\mathrm{Poset})$.
\end{remark}

\begin{definition}
Let $X$ be a topological space.
Let $F \in \Sh^{\hyp}(X,\Cat_{\infty})$.
We say that $F $ is \textit{locally generated} if there is a cover by open subsets $U\subset X$ such that for every $x\in U$, the functor $F(U)\to F_x$ is essentially surjective.
\end{definition}

\begin{definition}[{\cite[Definition 10.3.1]{Geometric_Stokes}}]
Let $X\subset \mathbb{C}^k$ be a polydisc with coordinates $(z,y)\coloneqq (z_1,\dots, z_n,y_1,\dots y_{k-n})$.
Let $D$ be the divisor defined by $z_1 \cdots z_n = 0$.
Let $b \in \OO_{X,0}(\ast D)/\OO_{X,0}$ and consider the Laurent expansion 
\[
\sum_{m\in \ZZ^n} b_m(y) z^m \ .
\]
We say that $a$ admits an order if  the set 
\[
\{m \in \ZZ^n \text{ with } b_m \neq 0\}\cup \{0\}
\]
admits a smallest element, denoted by $\mathrm{ord} (b)$.
\end{definition}

\begin{remark}[{\cite[Remark 10.3.2]{Geometric_Stokes}}]
The existence of an order does not depend on a choice of coordinates on $X$.
\end{remark}

\begin{recollection}[{\cite[Definition 2.1.2]{Mochizuki}}]\label{goodness}
Let $(X,D)$ be a strict normal crossing pair.
Let $x\in X$.
A subset $I\subset \OO_{X,x}(\ast D)/\OO_{X,x}$ is \textit{good} if 
\begin{enumerate}\itemsep=0.2cm
\item  every non zero $b \in I$ admits an order with $b_{\mathrm{ord} (b)}$ invertible in  $\OO_{X,x}$.
\item For every distinct $a,b\in I, a-b$  admits an order with $(a-b)_{\mathrm{ord} (a-b)}$ invertible in  $\OO_{X,x}$.
\item The set $\{\mathrm{ord}(a-b), a,b\in I\}\subset  \mathbb{Z}^n$ is totally ordered.
\end{enumerate}
\end{recollection}

\begin{recollection}[{\cite[Definition 2.4.2]{Mochizuki}}]\label{good_sheaf}
Let $(X,D)$ be a strict normal crossing pair.
A \textit{good sheaf of unramified irregular values} is a sheaf of unramified irregular values such that for every $x\in X$,  the set $\mathscr{I}_x \subset \OO_{X,x}(\ast D)/\OO_{X,x}$ is good.
\end{recollection}

\begin{proposition}[{\cite[Corollary 10.3.12]{Geometric_Stokes}}]\label{good_sheaf_constructible_finite_stratification}
Let $(X,D)$ be a strict normal crossing pair.
Suppose that $X$ admits a smooth compactification.
Let $\mathscr{I}\subset \OO_{X}(\ast D)/\OO_{X}$ be a good sheaf of unramified irregular values.
Let $\pi \colon  \widetilde{X}\to X$  be the real blow-up along $D$.
Then, there exists a finite subanalytic stratification 
$\widetilde{X}\to P$ refining $(\widetilde{X},\widetilde{D})$ such that $\pi^{\ast}\mathscr{I}\in \Cons_P(\widetilde{X},\mathrm{Poset})$.
\end{proposition}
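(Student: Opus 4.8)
The plan is to reduce to an explicit local computation on the real blow-up and to identify the locus where the order relation on $\pi^\ast\mathscr{I}$ jumps with an explicit subanalytic hypersurface. Since the statement is local on $X$ and $\pi$ is proper, and since $X$ admits a smooth compactification, I would first cover a neighborhood of $D$ by finitely many polydisc charts with coordinates $(z_1,\dots,z_n,y)$ in which $D = \{z_1\cdots z_n = 0\}$ and $\mathscr{I}$ is given by a good subset of $\OO_{X,x}(\ast D)/\OO_{X,x}$; a finite stratification constructed in each chart and shown to glue will then suffice, the compactness furnished by the smooth compactification guaranteeing global finiteness of the resulting stratification.

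Second, I would make the order relation explicit. Over the interior $\pi^{-1}(U)\cong U$ the sheaf $\pi^\ast\mathscr{I}$ is the zero sheaf, hence trivially locally constant; all the content sits along the boundary $\widetilde{D}$, whose angular coordinates $\theta_i = \arg(z_i)$ control moderate growth. For $a,b\in\mathscr{I}_x$ with $a\neq b$, goodness (\cref{goodness}) provides an order $m = \mathrm{ord}(a-b)\in\ZZ^n$ with invertible leading coefficient $c(y) = (a-b)_m$. Unwinding the definition of $\AA^{\mathrm{mod}}_{\widetilde{X}}$ (\cref{moderate_growth_sheaf}) and of $\leq$ (\cref{order_general}), I would show that at a boundary point $\tilde x$ lying over $x$ with angular coordinates $\theta$ the comparison $a\leq_{\tilde x}b$ holds precisely when $e^{a-b}$ has moderate growth in the directions of $\tilde x$, which reduces to the sign condition $\cos\bigl(\arg c(y) + \langle m,\theta\rangle\bigr)\leq 0$ on the real part of the dominant monomial. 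Thus the pointwise poset structure of $\pi^\ast\mathscr{I}$ is governed entirely by these angular sign conditions.

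Third, I would cut out the Stokes locus. For each pair $a,b$ the degeneracy set $\arg c(y) + \langle m,\theta\rangle \equiv \tfrac{\pi}{2}\pmod{\pi}$ is a real-analytic, hence subanalytic, hypersurface of $\widetilde{X}$; goodness condition (3) (total ordering of the set of orders $\mathrm{ord}(a-b)$) ensures these loci interact coherently, and together with the finiteness of $\mathscr{I}_x$ and the chart-finiteness from the first step that only finitely many such hypersurfaces occur. Refining the stratification $(\widetilde{X},\widetilde{D})$ by the finite union of these Stokes hypersurfaces produces a finite subanalytic stratification $\widetilde{X}\to P$ on each stratum of which every sign condition is constant, so the order on $\pi^\ast\mathscr{I}$ is constant; combined with the local constancy of the underlying good set $\mathscr{I}$ along the strata of $D$ (which one checks from the good structure) this yields $\pi^\ast\mathscr{I}\in\Cons_P(\widetilde{X},\mathrm{Poset})$.

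The main obstacle is the second step together with the finiteness claim in the third: translating the abstract moderate-growth order into the explicit cosine sign condition requires controlling the subdominant terms of $a-b$ uniformly, which is exactly what the invertibility of the leading coefficients in \cref{goodness} buys, and verifying that the resulting Stokes hypersurfaces are genuinely subanalytic and locally finite, for which the subanalytic ambient geometry provided by the smooth compactification is essential.
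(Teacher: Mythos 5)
The paper does not actually prove this statement: it is imported verbatim as \cite[Corollary 10.3.12]{Geometric_Stokes}, so there is no internal argument to compare against. Your sketch reconstructs what is, in substance, the argument behind the cited result (going back to Sabbah and Mochizuki): reduce to polydisc charts, note that the underlying sheaf of sets is already constructible for the stratification $(\widetilde X,\widetilde D)$, and observe that for a pair $a\neq b$ the order of \cref{order_general} is governed by the sign of the real part of the dominant monomial $c(y)z^{m}$, $m=\mathrm{ord}(a-b)$, whose degeneracy locus is a real-analytic, hence subanalytic, hypersurface in the angular coordinates; refining by the finitely many resulting Stokes hypersurfaces gives the desired $P$. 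This is the right approach and I see no structural gap.

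Two points to tighten. First, the stalkwise condition is strict: since a section of $\AA^{\mathrm{mod}}_{\widetilde X}$ must have moderate growth on a full neighborhood, $a\leq_{\tilde x}b$ holds if and only if $\cos\bigl(\arg c(y)+\langle m,\theta\rangle\bigr)<0$ at $\tilde x$; on the Stokes hypersurface itself every neighborhood meets directions where the cosine is positive, where $e^{a-b}$ beats every power of the equation of $D$ (and goodness, via the invertible leading coefficient and \cref{goodness}, prevents subleading terms from rescuing it), so there $a\not\leq b$. This does not affect the conclusion --- the relation is still constant on each stratum of the refinement, with the hypersurface strata lying on the ``not $\leq$'' side --- but the condition as you wrote it ($\leq 0$) is not what the stalk of the order sheaf computes. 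Second, $X$ itself need not be compact, so ``finitely many charts'' does not follow from properness of $\pi$ alone; the smooth compactification is precisely what makes the Stokes loci globally subanalytic and the stratification finite, and a complete write-up must check that the relevant data extend subanalytically over the boundary of the compactification. You flag both issues as the crux, which is fair, but they are where the real work in the cited proof lives.
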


\subsubsection{Sheaves of (ramified) irregular values}

We now introduce a generalization of the theory introduced in \cref{unramified_section}, from which we keep the notation.
The idea is that a sheaf of ramified irregular values is a sheaf that becomes unramified after pulling back along a ramified cover of $X$.
We begin by recording the following result.
It shows that the theory of unramified sheaves of irregular values can be presented purely in terms the real blow-up $\widetilde{X}$ of $X$.

\begin{lemma}[{\cite[Lemma 10.6.1]{Geometric_Stokes}}]\label{transfer_irregular_value_sheaf}
Let $(X,D)$ be a strict normal crossing pair.
Let $\pi \colon \widetilde{X}\to X$ be the real blow-up along $D$.
Let $\mathscr{I} \subset \pi^{\ast}(\OO_{X}(\ast D)/\OO_{X})$ be a sheaf.
Then, the following are equivalent:
\begin{enumerate}\itemsep=0.2cm
\item There is a sheaf of unramified irregular values $\mathscr{J}\subset \OO_{X}(\ast D)/\OO_{X}$ such that $\mathscr{I}\simeq \pi^{\ast}\mathscr{J}$.
\item The direct image $\pi_\ast \mathscr{I}\subset \OO_{X}(\ast D)/\OO_{X}$ is a sheaf of unramified irregular values and the counit transformation $\pi^{\ast} \pi_\ast  \mathscr{I}\to \mathscr{I}$ is an equivalence.
\end{enumerate}
\end{lemma}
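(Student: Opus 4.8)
The plan is to run the entire argument through the adjunction $\pi^{\ast} \dashv \pi_{\ast}$ between hypersheaves on $X$ and on $\widetilde{X}$, whose unit and counit I write as
\[
\eta \colon \mathrm{id} \to \pi_{\ast}\pi^{\ast}, \qquad \varepsilon \colon \pi^{\ast}\pi_{\ast} \to \mathrm{id}.
\]
The implication $(2) \Rightarrow (1)$ is then a tautology: assuming (2), I would set $\mathscr{J} \coloneqq \pi_{\ast}\mathscr{I}$, which is a sheaf of unramified irregular values by hypothesis, and read the assumed counit equivalence $\varepsilon_{\mathscr{I}} \colon \pi^{\ast}\pi_{\ast}\mathscr{I} \xrightarrow{\ \sim\ } \mathscr{I}$ as the desired identification $\mathscr{I} \simeq \pi^{\ast}\mathscr{J}$.

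The content is in $(1) \Rightarrow (2)$, and the key step I would isolate first is that $\pi_{\ast}\pi^{\ast} \simeq \mathrm{id}$ on sheaves pulled back from $X$, i.e.\ that the unit $\eta_{\mathscr{J}} \colon \mathscr{J} \to \pi_{\ast}\pi^{\ast}\mathscr{J}$ is an equivalence. I would verify this on stalks: since $\pi$ is proper (\cref{real_blow_up}), proper base change identifies the stalk $(\pi_{\ast}\pi^{\ast}\mathscr{J})_x$ with the global sections of $(\pi^{\ast}\mathscr{J})|_{\pi^{-1}(x)}$ over the fiber $\pi^{-1}(x)$. By construction the latter is the constant sheaf with value $\mathscr{J}_x$, and the fibers of the real blow-up are connected --- a single point over $X \setminus D$ and the compact torus $(S^{1})^{\lvert I\rvert}$ over $x \in D_I^{\circ}$ (cf.\ \cref{real_blow_up}). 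Global sections of a constant sheaf of posets on a connected, locally connected space recover its value, so $(\pi_{\ast}\pi^{\ast}\mathscr{J})_x \simeq \mathscr{J}_x$ and $\eta_{\mathscr{J}}$ is a stalkwise, hence genuine, equivalence.

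Granting this, I would conclude (2) in two moves. First, $\pi_{\ast}\mathscr{I} \simeq \pi_{\ast}\pi^{\ast}\mathscr{J} \simeq \mathscr{J}$, so $\pi_{\ast}\mathscr{I}$ is a sheaf of unramified irregular values because $\mathscr{J}$ is one (\cref{good_sheaf}). Second, the counit $\varepsilon_{\mathscr{I}} = \varepsilon_{\pi^{\ast}\mathscr{J}}$ is forced to be an equivalence by the triangle identity
\[
\varepsilon_{\pi^{\ast}\mathscr{J}} \circ \pi^{\ast}(\eta_{\mathscr{J}}) = \mathrm{id}_{\pi^{\ast}\mathscr{J}},
\]
since $\pi^{\ast}(\eta_{\mathscr{J}})$ is an equivalence once $\eta_{\mathscr{J}}$ is.

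The main obstacle I anticipate is \emph{not} the set-level computation but the poset bookkeeping: the objects live in $\Sh^{\mathrm{hyp}}(-,\mathrm{Poset})$, and the partial order that $\pi^{\ast}\mathscr{J}$ inherits from the moderate-growth order on $\widetilde{X}$ (\cref{meromorphic_order}) genuinely varies across each fiber --- this is precisely the Stokes phenomenon. Thus I must ensure the equivalences above are equivalences of \emph{poset} sheaves compatible with the inclusions into $\pi^{\ast}(\OO_X(\ast D)/\OO_X)$ and $\OO_X(\ast D)/\OO_X$, and not merely of underlying sheaves of sets. Concretely, the order on $\pi_{\ast}\pi^{\ast}\mathscr{J}$ is given by requiring $\leq$ to hold over the whole connected fiber, and I would have to check this matches the structure carried by $\mathscr{J}$ on $X$. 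Connectedness and properness of $\pi$ are what drive the underlying identification; reconciling the orders is the delicate point.
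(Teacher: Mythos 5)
The paper does not actually prove this lemma: it is imported verbatim as \cite[Lemma 10.6.1]{Geometric_Stokes}, so there is no internal proof to compare against. Judged on its own terms, your argument is correct and is the natural one. The implication $(2)\Rightarrow(1)$ is indeed tautological, and for $(1)\Rightarrow(2)$ the whole content is that the unit $\mathscr{J}\to\pi_{\ast}\pi^{\ast}\mathscr{J}$ is an isomorphism for any sheaf on $X$; your stalkwise verification via properness of $\pi$ and connectedness of the fibers (a point off $D$, a compact torus $(S^{1})^{\lvert I\rvert}$ over $D_I^{\circ}$, cf.\ \cref{real_blow_up}) is exactly what is needed, and the triangle identity then forces the counit on $\pi^{\ast}\mathscr{J}$ to be invertible. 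Two small points. First, to make sense of the inclusion $\pi_{\ast}\mathscr{I}\subset\OO_X(\ast D)/\OO_X$ appearing in (2) you should note explicitly that your unit computation, applied to the ambient sheaf $\OO_X(\ast D)/\OO_X$ itself, identifies $\pi_{\ast}\pi^{\ast}(\OO_X(\ast D)/\OO_X)$ with $\OO_X(\ast D)/\OO_X$, and that naturality of the unit then makes the identification $\pi_{\ast}\mathscr{I}\simeq\mathscr{J}$ compatible with the inclusions. Second, the ``poset bookkeeping'' you flag as the delicate point largely dissolves: $\mathscr{J}$ lives on $X$, where no moderate-growth order is defined (the order of \cref{meromorphic_order} exists only on $\widetilde{X}$), and the statement concerns subobjects of the fixed ambient sheaf $\pi^{\ast}(\OO_X(\ast D)/\OO_X)$, whose order structures are induced from the ambient one. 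Hence any identification of subobjects compatible with the inclusions automatically respects the orders, and the lemma is genuinely a statement about underlying subsheaves; the direction-dependence of the Stokes order is real but plays no role here.
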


\begin{definition}[{\cite[Definition 10.6.2]{Geometric_Stokes}}]\label{goodness_upper_unramified_case}
If the equivalent conditions of \cref{transfer_irregular_value_sheaf} are satisfied, we say that $\mathscr{I} \subset  \pi^{\ast}(\OO_{X}(\ast D)/\OO_{X})$ is a sheaf of unramified irregular values.
If furthermore $\pi_\ast  \mathscr{I}$ is a good sheaf of unramified irregular values, we say that $\mathscr{I}$  is a good sheaf of unramified irregular values.
\end{definition} 

\begin{construction}[{\cite[Construction 10.6.4]{Geometric_Stokes} \& \cite[Section 9.c]{Sabbah}}]
\label{remified_irregular_values}
Let $X\subset \mathbb{C}^n$ be a polydisc with coordinates 
$(z_1,\dots, z_n)$.
Let $D$ be the divisor defined by $z_1 \cdots z_l = 0$ and put $U\coloneqq X\setminus D$.
Let $\pi \colon  \widetilde{X}\to X$ be the real blow-up along $D$.
Let $j \colon U \hookrightarrow \widetilde{X}$ be the canonical inclusion.
Define $\rho \colon X_d\to X$ by 
$(z_1,\dots, z_n)\to (z_1^d,\dots, z_l^d, z_{l+1}  , \dots,  z_n)$ for $d\geq 1$ and consider the (not cartesian for $d>1$) commutative square 
\[
\begin{tikzcd}
	\widetilde{X}_d\arrow{r}{\widetilde{\rho}} \arrow{d}{\pi_d} & \widetilde{X} \arrow{d}{\pi} \\
	X_d    \arrow{r}{\rho} & X   
\end{tikzcd} 
\]
of real blow-up along $D$. 
The unit transformation $\OO_{U} \hookrightarrow \rho_{\ast}   \OO_{U_d}$ yields an inclusion 
\[
j_{\ast}\OO_{U} \hookrightarrow j_{\ast}\rho_{\ast}   \OO_{U_d} \ .
\]
On the other hand, the unit transformation  $\pi^{\ast}_d\OO_{X_d}(\ast D) \hookrightarrow  j_{d,\ast}\OO_{U_d}$ yields 
\[
\widetilde{\rho}_{\ast}\pi^{\ast}_d\OO_{X_d}(\ast D) \hookrightarrow  \widetilde{\rho}_{\ast} j_{d,\ast}\OO_{U_d} = j_\ast  \rho_{\ast}\OO_{U_d} \ .
\]
Put
\[
IV_d \coloneqq j_{\ast}\OO_{U} \cap\widetilde{\rho}_{\ast}\pi^{\ast}_d\OO_{X_d}(\ast D)  \subset j_{\ast}\OO_{U}  \ .
\]
As in \cref{meromorphic_order}, we have 
\[
IV_d \cap (j_{\ast}\OO_{U})^{\mathrm{lb}}  = j_{\ast}\OO_{U} \cap \widetilde{\rho}_{\ast}\pi^{\ast}_d\OO_{X_d} \ .
\]
We put
\[
\mathscr{IV}_d \coloneqq IV_d/(IV_d\cap  (j_{\ast}\OO_{U})^{\mathrm{lb}}) \subset (j_{\ast}\OO_{U})/ (j_{\ast}\OO_{U})^{\mathrm{lb}} \ .
\]
For an arbitrary strict normal crossing pair $(X,D)$, the  $\mathscr{IV}_d$, $dn\geq 1$ are defined locally and glue into subshseaves 
\[
\mathscr{IV}_d (X,D) \subset (j_{\ast}\OO_{U})/ (j_{\ast}\OO_{U})^{\mathrm{lb}}  
\] 
for $d\geq 1$.
By \cref{order_general}, we view  $\mathscr{IV}_d(X,D)$ as an object of  $\Sh^{\hyp}(\widetilde{X},\mathrm{Poset})$.
\end{construction}

\begin{definition}[{\cite[Definition 10.6.6]{Geometric_Stokes}}]\label{Kummer_cover}
Let $(X,D)$ be a strict normal crossing pair.
Let $d \geq 1$ be an integer.
A $d$-Kummer cover of $(X,D)$ is an holomorphic map $\rho \colon X\to X $ such that there is a cover  by open subsets $U\subset X$ with $\rho(U)\subset U$ where $\rho|_U$ reads as 
\begin{equation}\label{standard_Kummer}
(z_1,\dots, z_n)\to (z_1^d,\dots, z_l^d, z_{l+1}  , \dots,  z_n)
\end{equation}
for some choice of local coordinates $(z_1,\dots, z_n)$ with $D$ defined by $z_1\cdots z_l = 0$.
\end{definition}

\begin{notation}
Following \cite{Sabbah}, in the setting of \cref{Kummer_cover}, we will denote the source of $\rho$ by $X_d$ instead of $X$.
\end{notation}

\begin{lemma}[{\cite[Lemma 10.6.10]{Geometric_Stokes}}]\label{goodness_ramified_case_lem}
Let $(X,D)$ be a strict normal crossing pair.
Let $d\geq 1$ be an integer and let $\mathscr{I} \subset \mathscr{IV}_d(X,D)$ be a sheaf.
Then, the following are equivalent:
\begin{enumerate}\itemsep=0.2cm
\item For every $x\in X$, there exist local coordinates $(z_1,\dots, z_n)$ centered at $x$ with $D$ defined by $z_1\dots z_l = 0$ such that for the map $\rho$ given by \eqref{standard_Kummer}, the pullback $\widetilde{\rho}^{\ast}\mathscr{I}$ is a sheaf of unramified irregular values (\cref{goodness_upper_unramified_case}).
\item For every open subset $U\subset X$ and every $d$-Kummer cover $\rho \colon U_d \to U$, the pullback   $\widetilde{\rho}^{\ast}\mathscr{I}$ is a sheaf of unramified irregular values (\cref{goodness_upper_unramified_case}).
\end{enumerate}
\end{lemma}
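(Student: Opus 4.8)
The plan is to prove the two implications separately, with essentially all the content lying in $(1) \Rightarrow (2)$. The reverse implication $(2) \Rightarrow (1)$ is formal: given $x \in X$, the strict normal crossing hypothesis provides local coordinates $(z_1, \dots, z_n)$ centered at $x$ with $D$ cut out by $z_1 \cdots z_l = 0$, and the associated map $\rho$ of \eqref{standard_Kummer} is then a $d$-Kummer cover of a neighborhood of $x$ in the sense of \cref{Kummer_cover}. Applying $(2)$ to this particular cover yields $(1)$ at $x$.

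For $(1) \Rightarrow (2)$, I would first record that being a sheaf of unramified irregular values, in the sense of \cref{goodness_upper_unramified_case}, is a \emph{local} condition on the total space of the cover: by \cref{transfer_irregular_value_sheaf} it amounts to asking that the direct image $\pi_{d, \ast}$ of the pullback be a sheaf of unramified irregular values downstairs, together with invertibility of the counit, and both of these can be tested on stalks. Consequently, to check $(2)$ for a fixed open $U$ and a fixed $d$-Kummer cover $\rho \colon U_d \to U$, it suffices to work in an arbitrarily small neighborhood of each point of $U$.

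The key step is a comparison of local models. By \cref{Kummer_cover} the cover $\rho$ is, near any chosen point, of the standard form \eqref{standard_Kummer} in suitable coordinates $(w_1, \dots, w_n)$; on the other hand $(1)$ furnishes coordinates $(z_1, \dots, z_n)$ at the same point for which the standard cover $\rho_0$ has unramified pullback $\widetilde{\rho}_0^{\ast}\mathscr{I}$. Both systems present the germ of $(X,D)$ with $D = \{w_1 \cdots w_l = 0\} = \{z_1 \cdots z_l = 0\}$, so after reindexing components they differ by a biholomorphism preserving $D$, necessarily of the form $w_i = z_i\, u_i(z)$ with $u_i$ a unit for $i \le l$. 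Extracting a local $d$-th root $v_i$ of each $u_i$ and setting $\eta_i = \zeta_i v_i$ produces a local isomorphism $\widetilde{\phi}$ from the real-blow-up cover of $\rho$ to that of $\rho_0$, compatible with the projections to $\widetilde{X}$, i.e. $\widetilde{\rho}_0 \circ \widetilde{\phi} = \widetilde{\rho}$. Hence $\widetilde{\phi}^{\ast}\,\widetilde{\rho}_0^{\ast}\mathscr{I} \simeq \widetilde{\rho}^{\ast}\mathscr{I}$, and since the conditions of \cref{goodness_upper_unramified_case} are invariant under such isomorphisms of pairs, the unramifiedness of $\widetilde{\rho}^{\ast}\mathscr{I}$ near the point follows from that of $\widetilde{\rho}_0^{\ast}\mathscr{I}$ granted by $(1)$. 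As the point was arbitrary and the property is local, this establishes $(2)$.

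The main obstacle I anticipate is the bookkeeping in constructing $\widetilde{\phi}$: one must verify that local $d$-th roots $v_i$ of the transition units can be chosen coherently so that $\widetilde{\phi}$ descends to an honest isomorphism of covers over the common open in $X$, compatibly with the real blow-ups and with the deck action of $\mu_d^l$. Since $\mathscr{I}$ is a single sheaf on $\widetilde{X}$ pulled back by both $\widetilde{\rho}$ and $\widetilde{\rho}_0$, no comparison of distinct sheaves is required; once $\widetilde{\phi}$ is in hand, the invariance of the conditions of \cref{transfer_irregular_value_sheaf} under isomorphisms of strict normal crossing pairs concludes the argument.
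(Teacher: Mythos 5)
The paper does not prove this lemma at all: it is imported verbatim by citation as \cite[Lemma 10.6.10]{Geometric\_Stokes}, so there is no internal proof to compare against. Judged on its own terms, your argument is correct and is the expected one. The implication $(2)\Rightarrow(1)$ is indeed immediate, since the standard map \eqref{standard_Kummer} in any adapted coordinate chart is itself a $d$-Kummer cover of that chart. For $(1)\Rightarrow(2)$ the two ingredients you isolate are the right ones: locality of the condition in \cref{goodness_upper_unramified_case} (both the requirement that $\pi_{d,\ast}$ of the pullback be a sheaf of unramified irregular values and the invertibility of the counit are checked stalkwise, and $\pi_\ast$ commutes with restriction to opens), and the local uniqueness of the $d$-Kummer cover up to isomorphism over $X$. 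Your construction of the comparison isomorphism via $w_i = z_i u_i(z)$ and a local $d$-th root $v_i$ of $u_i\circ\rho_0$ is the standard one, and the relation $\eta_i^d = \zeta_i^d\, u_i(\rho_0(\zeta)) = w_i(\rho_0(\zeta))$ shows it intertwines the two covers; it extends to the real blow-ups because it preserves each branch of $D$ separately. One small remark: the ``obstacle'' you flag about choosing the roots $v_i$ coherently and equivariantly for the deck action of $\mu_d^l$ is not actually an issue for your argument, since the condition is local and on a small polydisc (simply connected, $u_i$ nonvanishing) a single choice of branch suffices; no global coherence or equivariance is needed. The proof is a sketch at the level of detail, but the ideas are complete and correct.
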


\begin{definition}[{\cite[Definition 10.6.11]{Geometric_Stokes}}]\label{goodness_ramified_case}
If the equivalent conditions of \cref{goodness_ramified_case_lem} are satisfied, we say that $\mathscr{I} \subset \mathscr{IV}_d(X,D)$ is a sheaf of irregular values.
If furthermore the $\widetilde{\rho}^{\ast}\mathscr{I}$ are good sheaves of unramified irregular values, we say that $\mathscr{I}$ is a good sheaf of  irregular values.
\end{definition}

Analogous statements to \cref{good_sheaf_constructible_finite_stratification} hold in the setting of (ramified) irregular values, as we shall see now.

\begin{proposition}[{\cite[Proposition 10.6.13]{Geometric_Stokes}}]\label{good_sheaf_constructible_finite_stratification_ramified}
Let $(X,D)$ be a strict normal crossing pair.
Suppose that $X$ admits a smooth compactification.
Let $\mathscr{I} \subset \mathscr{IV}_d(X,D)$ be a sheaf of irregular values for some $d\geq 1$.
There exists a finite subanalytic stratification 
$\widetilde{X}\to P$ refining $(\widetilde{X},\widetilde{D})$ such that $\mathscr{I}\in \Cons_P^{hyp}(\widetilde{X},\mathrm{Poset})$.
\end{proposition}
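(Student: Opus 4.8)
The plan is to reduce to the unramified statement \cref{good_sheaf_constructible_finite_stratification} by pulling back along a Kummer cover, and then to descend the resulting stratification using the crucial fact that passing to real blow-ups turns the (ramified) Kummer cover into an honest finite covering map.

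First I would reduce to a local statement together with a compactness argument. Constructibility with respect to a stratification is local on $\widetilde{X}$, and finitely many finite subanalytic stratifications admit a common refinement that is again finite and subanalytic and refines $(\widetilde{X}, \widetilde{D})$. Since $X$ admits a smooth compactification, $\widetilde{X}$ is covered by finitely many open polydiscs $U$ on each of which a standard $d$-Kummer cover $\rho \colon U_d \to U$ as in \eqref{standard_Kummer} exists. Hence it suffices to produce, on each such $U$, a finite subanalytic stratification of $\widetilde{U}$ refining $(\widetilde{U}, \widetilde{D})$ for which $\mathscr{I}|_{\widetilde{U}}$ is constructible, and then to pass to a common refinement over the finite cover.

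Fix such a $U$, let $\widetilde{\rho} \colon \widetilde{U}_d \to \widetilde{U}$ be the induced map on real blow-ups, and write $G = (\ZZ/d)^l$ for its Galois group. The geometric point is that, although $\rho$ is ramified along $D$, the map $\widetilde{\rho}$ is a finite topological covering: in radial--angular coordinates it reads $(r,\theta) \mapsto (r^d, d\theta)$, a local homeomorphism. By \cref{goodness_ramified_case_lem} the pullback $\widetilde{\rho}^\ast \mathscr{I}$ is a sheaf of unramified irregular values (\cref{goodness_upper_unramified_case}), and, $\mathscr{I}$ being good, a good one by \cref{goodness_ramified_case}; thus by \cref{transfer_irregular_value_sheaf} it has the form $\pi_d^\ast \mathscr{J}$ for a good sheaf of unramified irregular values $\mathscr{J} \subset \OO_{U_d}(\ast D)/\OO_{U_d}$. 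Applying \cref{good_sheaf_constructible_finite_stratification} to $\mathscr{J}$ yields a finite subanalytic stratification $\widetilde{U}_d \to Q$ refining $(\widetilde{U}_d, \widetilde{D}_d)$ for which $\widetilde{\rho}^\ast \mathscr{I} \simeq \pi_d^\ast \mathscr{J}$ is constructible.

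It then remains to descend $Q$ along the covering $\widetilde{\rho}$. I would replace $Q$ by the common refinement of its $G$-translates, which is again finite, subanalytic, refines the $G$-invariant pair $(\widetilde{U}_d, \widetilde{D}_d)$, and keeps $\widetilde{\rho}^\ast \mathscr{I}$ constructible. Since $G$ acts freely and $\widetilde{\rho}$ is a covering map, this $G$-invariant stratification descends to a finite subanalytic stratification $\widetilde{U} \to P$ refining $(\widetilde{U}, \widetilde{D})$, with $\widetilde{\rho}$ carrying strata to strata by local homeomorphisms. On each stratum $S$ of $P$, a local section of $\widetilde{\rho}$ identifies $\mathscr{I}|_S$ with the restriction of the locally constant sheaf $\widetilde{\rho}^\ast \mathscr{I}$ to one sheet over $S$; as $\widetilde{\rho}$ is a local homeomorphism, $\mathscr{I}|_S$ is locally constant, that is, $\mathscr{I}$ is constructible for $P$. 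I expect the descent step to be the main obstacle: it is exactly here that one uses that the real blow-up converts the ramified cover $\rho$ into a genuine finite covering $\widetilde{\rho}$, so that local constancy of the poset-valued sheaf transfers back along local sections.
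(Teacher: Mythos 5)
The paper does not actually prove this proposition: it is imported verbatim by citation from \cite[Proposition 10.6.13]{Geometric_Stokes}, so there is no in-paper argument to compare against. Your overall strategy --- pull back along a local Kummer cover, observe that on real blow-ups $\widetilde{\rho}$ becomes a genuine finite $(\ZZ/d)^l$-covering (in radial--angular coordinates it reads $(r,\theta)\mapsto(r^d,d\theta)$, a local homeomorphism even along $\widetilde{D}$), apply the unramified statement \cref{good_sheaf_constructible_finite_stratification} to $\pi_{d,\ast}\widetilde{\rho}^{\ast}\mathscr{I}$, then average the resulting stratification over the deck group and descend --- is the standard and almost certainly the intended reduction. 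The descent of a $G$-invariant subanalytic stratification and of local constancy along a finite analytic covering with local sections is unproblematic, and you correctly identify the conversion of the ramified cover into an honest covering as the key geometric input.

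The step I would not accept as written is the finiteness. You assert that since $X$ admits a smooth compactification, $\widetilde{X}$ is covered by finitely many open polydiscs. Admitting a smooth compactification does not make $X$ (hence $\widetilde{X}$) compact --- the typical case in this theory is $X$ quasi-projective and non-compact --- so no finite polydisc cover exists; nor can one simply restrict a finite chart cover of $\overline{X}$ to $X$, since the charts meeting $\overline{X}\setminus X$ are no longer polydiscs adapted to $D$ on which a standard Kummer cover is available. Since the finiteness of $P$ is the only globally nontrivial content of the proposition (local constructibility already follows from your local argument), and since it is precisely what the compactification hypothesis is meant to deliver, this is a genuine gap: one needs a separate mechanism, as in the unramified case, where finiteness comes from subanalyticity relative to the compactification rather than from a finite atlas. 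A secondary point of the same flavour: when you invoke \cref{good_sheaf_constructible_finite_stratification} for the polydisc $U_d$, you should check (or arrange, by shrinking to relatively compact polydiscs) that its hypotheses are satisfied, since an open polydisc is not itself compactified in the sense required there.
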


We refer to \cite[Section 10]{Geometric_Stokes} for a more thorough introduction to the classical theory of Stokes functors.

\subsubsection{Compact generators for $\St_{\I, k}$}

\begin{theorem}
Let $(X,D)$ be a strict normal crossing pair.
Suppose that $X$ admits a smooth compactification.
Let $\mathscr{I} \subset \mathscr{IV}_d(X,D)$ be a sheaf of irregular values for some $d\geq 1$.
Then the inclusion $i \colon \St_{\I, k} \hookrightarrow \Repk$ is closed under limits and colimits.
In particular, $\St_{\I, k}$ is presentable stable and the inclusion $i$ admits both a left and a right adjoint.
\end{theorem}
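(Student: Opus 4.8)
The plan is to realise $\St_{\I,k}$ as the intersection, inside the presentable stable $\infty$-category $\Repk = \Fun(\I,\Mod_k)$, of the two full subcategories cut out by the conditions of \cref{def_PS} and \cref{def_cocart}, and then to establish closure under limits and colimits; the decisive structural input is that $\I \to \X$ is a cocartesian fibration in \emph{finite} posets. This finiteness is exactly what \cref{good_sheaf_constructible_finite_stratification_ramified} guarantees: the good sheaf $\mathscr{I}$ is constructible for a finite subanalytic stratification, so each fiber $\I_x$ is a finite poset.

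First I would dispose of the cocartesian condition. Fix a morphism $\gamma \colon x \to y$ of $\X$. By \cref{recollection_cocart}, being cocartesian at $\gamma$ is the condition that the Beck--Chevalley map $\eta_\gamma F \colon \gamma_! j_x^\ast F \to j_y^\ast F$ be an equivalence, i.e. it is the equivalence-locus of a natural transformation between the endofunctors $F \mapsto \gamma_! j_x^\ast F$ and $F \mapsto j_y^\ast F$ of $\Repk$. The restrictions $j_x^\ast, j_y^\ast$ preserve all limits and colimits, while $\gamma_!$ is the left Kan extension along the cocartesian transport $\gamma \colon \I_x \to \I_y$ of finite posets, hence is computed pointwise by a \emph{finite} colimit; since $\Mod_k$ is stable, a finite colimit commutes with all limits (a fiber is a shifted cofiber, and a finite coproduct is a finite product), so $\gamma_!$ preserves both limits and colimits. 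Therefore the locus where $\eta_\gamma F$ is an equivalence is closed under limits and colimits, and intersecting over all $\gamma$ shows that the full subcategory of cocartesian functors is closed under limits and colimits in $\Repk$.

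The punctually split condition of \cref{def_PS} is the main obstacle, and here a purely fiberwise argument fails: the essential image of $i_{\I_x,!}$ (\cref{recollection_PS}) is \emph{not} by itself closed under limits in $\Fun(\I_x,\Mod_k)$, because the associated-graded reconstruction of a split functor is not natural, so the fiber of a map of split functors need not be split. The genuine statement --- that the punctually split \emph{cocartesian} functors nonetheless form a subcategory closed under limits and colimits --- relies on the interaction of the local splitting with the cocartesian gluing across the finite-poset fibers, and is precisely what is established in the abstract theory of \cite{Abstract_Stokes}. I would therefore invoke that theory, whose hypotheses are met thanks to \cref{good_sheaf_constructible_finite_stratification_ramified}, to conclude that $\St_{\I,k} \subset \Repk$ is closed under limits and colimits.

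Finally I would assemble the formal consequences. The zero functor lies in $\St_{\I,k}$, which, being closed under finite limits and finite colimits in the stable $\Repk$, is itself stable. The defining conditions are accessible --- each is an equivalence-locus of a map of accessible functors, respectively a preimage under the accessible functor $j_x^\ast$ of the essential image of the accessible functor $i_{\I_x,!}$, indexed by the small base $\X$ --- so $\St_{\I,k}$ is an accessible subcategory; being in addition closed under all colimits it is presentable. The inclusion $i$ preserves limits and is accessible, hence admits a left adjoint by the adjoint functor theorem, and preserves colimits and is accessible, hence admits a right adjoint. This yields all the assertions of the statement.
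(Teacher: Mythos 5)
Your proposal is correct and, in the end, rests on exactly the same external input as the paper: the paper's entire proof is the citation ``\cite[Corollary 10.6.14 \& Corollary 7.1.4]{Geometric_Stokes}'', and you likewise defer the substantive point (closure of the punctually split cocartesian functors under limits) to the Porta--Teyssier theory. What you add beyond the paper is a correct self-contained verification that the cocartesian locus is closed under limits and colimits (via finiteness of the fibers and stability of $\Mod_k$) and an accurate diagnosis that the punctually split condition is the genuinely hard part that cannot be handled fiberwise --- this is sound, though your remark that the hypotheses of \cite{Abstract_Stokes} ``are met thanks to'' the finite stratification alone slightly undersells the role of goodness of $\mathscr{I}$, which is what the cited Corollary 10.6.14 actually verifies.
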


\begin{proof}
This is \cite[Corollary 10.6.14 \& Corollary 7.1.4]{Geometric_Stokes}.
\end{proof}

\begin{recollection}\label{cocartesian_fibration}
	In the setting of \cref{good_sheaf_constructible_finite_stratification_ramified} let $(\widetilde{X},P)$ be a finite subanalytic stratification for which $\pi^{\ast}\mathscr{I} \in \Cons_P^{\hyp}(X; \mathrm{Poset})$.
	Via Lurie's Grothendieck construction (\cite[Theorem 3.2.0.1]{HTT}) and the exodromy equivalence (\cref{exodromy_formal}), the constructible sheaf $\pi^{\ast} \mathscr{I}$ corresponds to a cocartesian fibration in finite posets $\I\to \Pi(\widetilde{X},P)$.
	Since the stratification is finite, $\Pi_\infty(\widetilde{X},P)$ has a finite number of equivalence classes. Since each fiber of the cocartesian fibration is a finite poset, it follows that $\I$ has a finite number of equivalence classes.
\end{recollection}

\begin{definition}
	In the setting of \cref{cocartesian_fibration}, we will refer to $\I \to \Pi(\widetilde{X},P)$ as the cocartesian fibration associated to $\pi^{\ast}\mathscr{I}$.
\end{definition}

\begin{lemma}\label{cpt_gen_Stokes}
	In the setting of \cref{cocartesian_fibration}, let $\left\lbrace c_a \right\rbrace_{a \in I}$ be a finite set of representatives of the equivalence classes of $\I$.
	Let $i^{\mathrm{L}} \colon \Repk \to \St_{\I, k}$ be the left adjoint of the inclusion $i \colon \St_{\I, k} \to \Repk$.
	Then $\bigoplus_{a \in I} i^{\mathrm{L}}(\ev_{c_a, !})$ is a compact generator of $\St_{\I, k}$, where $\ev_{c_a, !} \in \Repk$ is as in \cref{cpt_gen_Rep}.
\end{lemma}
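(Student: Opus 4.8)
The plan is to deduce everything from the adjunction $i^{\mathrm{L}} \dashv i$ together with the fact, established in \cref{cpt_gen_Rep}, that $\bigoplus_{a \in I} \ev_{c_a, !}$ is a compact generator of $\Repk$. First I would record that $i^{\mathrm{L}}$ preserves compact objects: its right adjoint is the inclusion $i$, which by the preceding theorem is closed under colimits and in particular commutes with filtered colimits, so the standard criterion (a left adjoint whose right adjoint commutes with filtered colimits preserves compact objects) applies. Since each $\ev_{c_a, !}$ is compact in $\Repk$, each $i^{\mathrm{L}}(\ev_{c_a, !})$ is compact in $\St_{\I, k}$, and hence so is the finite direct sum $\bigoplus_{a \in I} i^{\mathrm{L}}(\ev_{c_a, !}) \simeq i^{\mathrm{L}}\big(\bigoplus_{a \in I}\ev_{c_a, !}\big)$, the last equivalence because $i^{\mathrm{L}}$, being a left adjoint, preserves direct sums.

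Next I would verify the generation condition (1) of \cref{generators_stable}. Let $y \in \St_{\I, k}$ and suppose $\Map_{\St_{\I, k}}(i^{\mathrm{L}}(\ev_{c_a, !}), y) \simeq 0$ for every $a \in I$. By the adjunction $i^{\mathrm{L}} \dashv i$ this mapping spectrum is identified with $\Map_{\Repk}(\ev_{c_a, !}, i(y))$, which by \cref{cpt_gen_Rep} is the evaluation $i(y)(c_a)$. Thus $i(y)(c_a) \simeq 0$ for all the chosen representatives $c_a$; since the $c_a$ exhaust the equivalence classes of $\I$ and the evaluation functors are jointly conservative (\cref{cpt_gen_Rep}), we conclude $i(y) \simeq 0$ in $\Repk$, and hence $y \simeq 0$ because $i$ is fully faithful. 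This is exactly condition (1), so $\bigoplus_{a \in I} i^{\mathrm{L}}(\ev_{c_a, !})$ is a compact generator in the sense of \cref{compact_gen_def}.

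I expect no serious obstacle: the argument is purely formal once the preceding theorem guarantees that $i$ preserves colimits and admits a left adjoint. The only point requiring a little care is the interplay between compactness and generation needed to invoke \cref{generators_stable}, whose statement presupposes an object of $\Prlo_k$. I would therefore phrase the argument so that the compactness of the $i^{\mathrm{L}}(\ev_{c_a, !})$ together with the conservativity computation above simultaneously establish that $\St_{\I, k}$ is compactly generated (hence lies in $\Prlo_k$) and that the displayed object is a compact generator, thereby avoiding any circularity.
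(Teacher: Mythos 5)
Your argument is correct and is essentially the paper's own proof, just written out in full: the paper's one-line justification ("the functors $\Hom_{\Repk}(i^{\mathrm{L}}(\ev_{c_a,!}),-)\simeq \ev_{c_a}$ are jointly conservative, so the statement follows") is exactly your adjunction computation plus the compactness of the $i^{\mathrm{L}}(\ev_{c_a,!})$, which the paper leaves implicit and you correctly derive from $i$ preserving (filtered) colimits.
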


\begin{proof}
	Since the family of functors
\[
\left\lbrace \Hom_{\Repk}(i^{\mathrm{L}}(ev_{c_a,!}), -) \simeq \ev_{c_a} \colon St_{\I, k} \to \Mod_k \right\rbrace_{a \in I}
\]
is jointly conservative, the statement follows.
\end{proof}

\begin{corollary}
	Let $(X,D)$ be a strict normal crossing pair.
	Suppose that $X$ admits a smooth compactification.
	Let $\mathscr{I}$ be a good sheaf of unramified irregular values.
	Let $(\widetilde{X},P)$ be a finite subanalytic stratification for which $\pi^{\ast}\mathscr{I}$ is constructible and $\I \to \Pi_\infty(\widetilde{X},P)$ the associated cocartesian fibration. let $\left\lbrace c_a \right\rbrace_{a \in I}$ be a finite set of representants of the equivalence classes of $\I$.
	For $\Spec(A) \in \dAff_k$, an object $F \in \St_{\I, A} \simeq (\St_{\I, k})_A$ is pseudo-perfect over $A$ if and only if $F(c_a)$ is pseudo-perfect for every $a$.
\end{corollary}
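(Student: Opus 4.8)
The plan is to reduce the statement to its analogue for ordinary representations, \cref{pseudo_perf_Rep}, by running the argument of \cref{compact_generator_smooth} with the compact generator of $\St_{\I, k}$ produced in \cref{cpt_gen_Stokes}. First I would record that $\St_{\I, k}$ is smooth: it is of finite type in the sense of \cref{def_finiteness_conditions} (this underlies the identification $\mathbf{St}_{\I, k} \simeq \M_{\St_{\I, k}}$ of \cite[Theorem 8.1.3]{Geometric_Stokes}), hence smooth by \cref{relation_finiteness_conditions}. By \cref{cpt_gen_Stokes} the object $E \coloneqq \bigoplus_{a \in I} i^{\mathrm{L}}(\ev_{c_a, !})$ is a compact generator of $\St_{\I, k}$, so \cref{compact_generator_smooth} applies and gives that $F \in (\St_{\I, k})_A \simeq \St_{\I, A}$ is pseudo-perfect over $A$ if and only if $F(E) \in \Perf(A)$.

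The core of the argument is the identification $F(E) \simeq \bigoplus_{a \in I} F(c_a)$, where $F(E) = \Hom_{(\St_{\I, k})_A}(E \otimes_k A, F)$ is the value of the enriched Yoneda functor $h_F$. By \cref{functoriality_tens_prod} the reflection $i^{\mathrm{L}} \dashv i$ induces an adjunction $(i^{\mathrm{L}})_A \dashv i_A$ between $\Rep_A(\I) \simeq (\Repk)_A$ and $\St_{\I, A} \simeq (\St_{\I, k})_A$; under the stability of Stokes functors under change of coefficients (\cite[Proposition 5.6.1]{Abstract_Stokes}, as used in \cref{push_Stokes_heart}) the functor $i_A$ is the honest inclusion $\St_{\I, A} \hookrightarrow \Rep_A(\I)$. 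Moreover left adjoints commute with the base-change functors $- \otimes_k A$, so that $i^{\mathrm{L}}(\ev_{c_a, !}) \otimes_k A \simeq (i^{\mathrm{L}})_A(\ev_{c_a, !} \otimes_k A)$. Combining this with the corepresentability of evaluation at $c_a$ from \cref{cpt_gen_Rep} yields
\[
\Hom_{(\St_{\I, k})_A}\bigl(i^{\mathrm{L}}(\ev_{c_a, !}) \otimes_k A,\, F\bigr) \simeq \Hom_{\Rep_A(\I)}\bigl(\ev_{c_a, !} \otimes_k A,\, i_A(F)\bigr) \simeq F(c_a),
\]
where $i_A(F)$ is simply $F$ regarded as an $A$-linear representation of $\I$. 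Summing over $a \in I$ gives $F(E) \simeq \bigoplus_{a} F(c_a)$.

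To finish, I would observe, exactly as in the proof of \cref{pseudo_perf_Rep}, that a finite direct sum lies in $\Perf(A)$ if and only if each summand does. Combined with the previous two paragraphs, this shows that $F$ is pseudo-perfect over $A$ if and only if $F(c_a) \in \Perf(A)$ for every $a \in I$, which is the claim.

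I expect the main obstacle to be the base-change compatibility of the Stokes reflection $i^{\mathrm{L}}$ with $- \otimes_k A$, which is what lets $F(E)$ — a priori computed inside the Stokes category over $A$ — be expressed through the plain evaluations $F(c_a)$ of $F$ as a representation; once this commutation is secured via \cref{functoriality_tens_prod} and the stability of Stokes functors in $\Pr^{\mathrm{L}}$, the remaining steps are formal. A secondary point to pin down is the smoothness of $\St_{\I, k}$; if one prefers to avoid invoking finite type, one can instead replicate the short argument of \cref{compact_generator_smooth} verbatim, since that proof only uses that $E$ generates $(\St_{\I, k})^\omega$ under shifts, finite colimits and retracts together with the closure of $\Perf(A)$ under the same operations.
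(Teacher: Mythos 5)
Your proposal is correct and follows the same route as the paper: the paper's proof is literally ``thanks to \cref{cpt_gen_Stokes}, same as the proof of \cref{pseudo_perf_Rep}'', i.e.\ apply \cref{compact_generator_smooth} to the compact generator $\bigoplus_{a}i^{\mathrm{L}}(\ev_{c_a,!})$, identify its evaluation with $\bigoplus_a F(c_a)$ via the adjunction $i^{\mathrm{L}}\dashv i$, and use that a finite direct sum is perfect iff each summand is. Your write-up simply makes explicit the base-change compatibility and the smoothness/generation input that the paper leaves implicit.
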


\begin{proof}
	Thanks to \cref{cpt_gen_Stokes}, same as the proof of \cref{pseudo_perf_Rep}.
\end{proof}

\subsection{Good moduli spaces for $\St_{\I, k}$}

	 We show in this paragraph that the moduli of Stokes data admits a good moduli space.

\begin{theorem}[{\cite[Theorem 10.6.15]{Geometric_Stokes}}]\label{Stokes_moduli}
Let $(X,D)$ be a strict normal crossing pair.
Suppose that $X$ admits a smooth compactification and let $\pi\colon \widetilde{X} \to X$ the real blow-up of $X$ along $D$.
Let $\mathscr{I} \subset \mathscr{IV}_d(X,D)$ be a sheaf of irregular values for some $d\geq 1$ and $(X,P)$ be a finite subanalytic stratification for which $\pi^\ast\mathscr{I} \in \Cons_P(\widetilde{X}; \mathrm{Posets})$.
Let $\mathcal{I} \to \Pi_\infty(\widetilde{X},P)$ the cocartesian fibration in finite posets associated to $\pi^\ast\mathscr{I}$.
Then we have an equivalence $\mathbf{St}_{\I, k} \simeq \M_{\St_{\I,k}}$ in $\dSt_k$.
In particular $\mathbf{St}_{\I, k}$ is a locally geometric derived stack, locally of finite presentation over $k$.
\end{theorem}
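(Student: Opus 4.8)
The plan is to deduce both assertions from the representability theorem of Toën--Vaquié (\cite[Theorem I.5.9]{HDR}): once we know that $\St_{\I,k}\in\Prlo_k$ is of finite type and that $\M_{\St_{\I,k}}$ has exactly the prescribed $A$-points, the equivalence $\mathbf{St}_{\I,k}\simeq\M_{\St_{\I,k}}$ holds by construction of $\M_{\St_{\I,k}}$ and the local geometricity of $\mathbf{St}_{\I,k}$ is automatic. I would therefore split the argument into three steps: (a) finite type of $\St_{\I,k}$; (b) the base-change identity $(\St_{\I,k})_A\simeq\St_{\I,A}$ compatible with the inclusions into $\Rep_A(\I)$; and (c) the identification of pseudo-perfect $A$-families with $\St_{\I,A,\omega}$.

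For (a), recall from \cref{cocartesian_fibration} that $\I$ is the total space of a cocartesian fibration in finite posets over $\Pi_\infty(\widetilde{X},P)$; since the stratification is finite, the base has finitely many equivalence classes and each fibre is a finite poset, so one checks $\I\in\Cat_\infty^\omega$ and hence $\Repk=\Fun(\I,\Mod_k)$ is of finite type. By \cite[Corollary 10.6.14 \& Corollary 7.1.4]{Geometric_Stokes} the inclusion $i\colon\St_{\I,k}\hookrightarrow\Repk$ admits a left adjoint $i^{\mathrm L}$, and both $i$ and $i^{\mathrm L}$ preserve colimits, hence are morphisms in $\Prlo_k$, with $i^{\mathrm L}\circ i\simeq\Id$. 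Thus $\St_{\I,k}$ is a retract of $\Repk$ in $\Prlo_k$, and since compact objects are closed under retracts it is of finite type. By \cref{relation_finiteness_conditions} it is in particular smooth, so it admits the compact generator $E=\bigoplus_a i^{\mathrm L}(\ev_{c_a,!})$ of \cref{cpt_gen_Stokes}.

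For (b), the whole adjunction $i^{\mathrm L}\dashv i\dashv i^{\mathrm R}$ is $\Mod_k$-linear, so applying $-\otimes_k\Mod_A$ to the idempotent cutting out $\St_{\I,k}\subset\Repk$ produces an idempotent on $\Rep_A(\I)\simeq\Repk_A$ (using \cref{tens_prod_Rep}) whose image must be identified with the punctually split cocartesian functors over $A$. The cocartesian condition base-changes formally, being phrased through the colimit-preserving operations $\gamma_!,j_x^\ast,j_y^\ast$, which commute with $-\otimes_k\Mod_A$; the punctual-splitness condition, an essential-image condition for the left Kan extensions $i_{\I_x,!}$, is handled by the stability of Stokes functors under change of coefficients in $\Pr^{\mathrm L}$ (\cite[Proposition 5.6.1]{Geometric_Stokes}). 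This yields $(\St_{\I,k})_A\simeq\St_{\I,A}$ inside $\Rep_A(\I)$. For (c), since $\St_{\I,k}$ is smooth with compact generator $E$, a family $F\in(\St_{\I,k})_A$ is pseudo-perfect iff $F(E)\in\Perf(A)$ iff $F(c_a)\in\Perf(A)$ for every $a$ (\cref{compact_generator_smooth} together with the corollary following \cref{cpt_gen_Stokes}), which is exactly the condition defining $\St_{\I,A,\omega}\subset\St_{\I,A}$. Passing to maximal groupoids gives $\M_{\St_{\I,k}}(A)\simeq(\St_{\I,A,\omega})^\simeq=\mathbf{St}_{\I,k}(A)$ functorially in $A$, hence $\mathbf{St}_{\I,k}\simeq\M_{\St_{\I,k}}$, and local geometricity follows from \cite[Theorem I.5.9]{HDR}.

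The main obstacle is step (b), and within it the punctual-splitness condition. Closure of $\St_{\I,k}$ under limits and colimits makes the cocartesian part and the tensor-product bookkeeping formal, but punctual splitness is not a (co)completeness condition: it requires each $j_x^\ast F$ to lie in the essential image of $i_{\I_x,!}$, and one must check both that this is preserved by $-\otimes_k A$ and, conversely, that every Stokes functor over $A$ descends to the image of the tensored idempotent. This is precisely what Porta--Teyssier isolate in \cite[Theorem 8.1.3]{Geometric_Stokes} and \cite[Proposition 8.2.2]{Geometric_Stokes}, so in practice I would verify that the classical ramified data of \cref{cocartesian_fibration} satisfy the hypotheses of that theorem, thereby reducing the statement to their general result.
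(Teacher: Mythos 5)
The paper does not actually prove this statement: it is imported wholesale as \cite[Theorem 10.6.15]{Geometric_Stokes}, and the surrounding discussion makes clear that the intended justification is to specialize the general representability theorem \cite[Theorem 8.1.3]{Geometric_Stokes} together with \cite[Proposition 8.2.2]{Geometric_Stokes} to the classical ramified case, the only input specific to the situation at hand being the constructibility of $\pi^{\ast}\mathscr{I}$ for a finite subanalytic stratification (\cref{good_sheaf_constructible_finite_stratification_ramified}) and the resulting cocartesian fibration in finite posets of \cref{cocartesian_fibration}. Your closing paragraph lands exactly on this reduction, and your identification of base change of punctual splitness as the genuinely non-formal point is where the work in \emph{loc.\ cit.}\ actually lies; so as an account of how the theorem is obtained, your proposal is consistent with the paper.

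If, however, your three-step sketch is meant to stand on its own, step (a) has a real gap. Exhibiting $\St_{\I,k}$ as a retract of $\Repk$ via $i^{\mathrm{L}}\circ i\simeq\Id$ only yields compactness in $\Prlo_k$ if \emph{both} $i^{\mathrm{L}}$ and $i$ are morphisms of $\Prlo_k$, i.e.\ preserve compact objects. For $i^{\mathrm{L}}$ this is automatic, since its right adjoint $i$ preserves filtered colimits; but for $i$ it amounts to asking that the \emph{right} adjoint $i^{\mathrm{R}}$ preserve filtered colimits, and this does not follow formally from closure of $\St_{\I,k}$ under limits and colimits (a compact object of a bireflective subcategory need not be compact in the ambient category). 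This is precisely why finite-typeness of the Stokes category is a substantive theorem in \cite{Geometric_Stokes}, proved by an induction on the level/stratification structure that reduces to the split case $\St_{\I,k}\simeq\Fun(\I^{set},\Mod_k)$, rather than by a retract argument. Steps (b) and (c) are sound as outlines, modulo the caveat about punctual splitness that you already flag yourself and correctly delegate to \cite[Theorem 8.1.3]{Geometric_Stokes}.
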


\begin{notation}
	In the setting of \cref{Stokes_moduli}, we let $\mathbf{St}_{\I, k}^{[0,0]} \subset \mathbf{St}_{\I, k}$ be the open substack corresponding to $\M_{\St_{\I, k}}^{[0,0]} \subset \M_{\St_{\I, k}}$.
	We denote by $\mathbf{St}_{\I, k}^{\heartsuit}$ the truncation of $\mathbf{St}_{\I, k}^{[0,0]}$ .
\end{notation}

\begin{remark}\label{locally_free_valued_Stokes}
	Let $\Spec(A) \in \dAff_k$. In the setting of \cref{Stokes_moduli}, a Stokes functor is $\tau$-flat and pseudo-perfect over $A$ if and only if it takes values in perfect $A$-modules of Tor-amplitude $[0,0]$.
	In particular if $A$ is discrete, a Stokes functor is $\tau$-flat and pseudo-perfect over $A$ if and only if it takes values in locally free $A$-modules of finite rank.
	This follows by combining \cref{t_structure_Stokes} and \cref{cpt_gen_Stokes} with \cref{locally_free_valued}.
\end{remark}

\begin{proposition}\label{openness_flatness_Stokes}
	Let $(X,D)$ be a strict normal crossing pair.
	Suppose that $X$ admits a smooth compactification.
	Let $\mathscr{I} \subset \mathscr{IV}_d(X,D)$ be a sheaf of irregular values for some $d\geq 1$.
	Let $(\widetilde{X},P)$ be a finite subanalytic stratification for which $\pi^{\ast}\mathscr{I} \in \Cons_P(\widetilde{X}; \mathrm{Posets})$.
	Let $\I \to \Pi_\infty(\widetilde{X},P)$ the associated cocartesian fibration.
	The standard $t$-structure $\taust$ on $\St_{\I, k}$ satisfies openness of flatness.
	In particular, $\mathbf{St}_{\I, k}^{\left[0,0\right]}$ is a $1$-Artin derived stack, locally of finite presentation, and $\mathbf{St}_{\I, k}^\heartsuit$ is an algebraic stack locally of finite presentation over $k$.
\end{proposition}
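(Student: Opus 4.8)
The plan is to reduce openness of flatness for $\St_{\I, k}$ to the corresponding statement for $\Repk$ proved in \cref{openness_flatness_Rep}. The guiding principle is that the standard $t$-structure on $\St_{\I, k}$ is by definition the restriction of the one on $\Repk$ (\cref{t_structure_Stokes}), so flatness of a Stokes functor ought to be detected on its underlying representation.

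First I would fix $\Spec(A) \in \dAff_k$ together with a Stokes family $F \in (\St_{\I, k})_A \simeq \St_{\I, A}$, and compare the notion of $\tau_A$-flatness for $F$ (as an object of $\St_{\I, A}$) with that for its underlying representation $i_A(F) \in \Rep_A(\I)$. The core claim is that these coincide. For any $M \in \Mod_A^\heartsuit$, the family $M \otimes_A F$ is again a Stokes functor: this is the stability of Stokes functors under change of coefficients in $\Pr^{\mathrm{L}}$ (\cite[Proposition 5.6.1]{Abstract_Stokes}), applied to the colimit-preserving functor $- \otimes_A M$. Since the inclusion $i$ is closed under limits and colimits, the truncation functors restrict and the heart satisfies $(\St_{\I, A})^\heartsuit = \St_{\I, A} \cap \Rep_A(\I)^\heartsuit$. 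Hence $M \otimes_A F$ lies in $(\St_{\I, A})^\heartsuit$ precisely when it lies in $\Rep_A(\I)^\heartsuit$, which shows that $F$ is $\tau_A$-flat if and only if $i_A(F)$ is. Because this compatibility is preserved under arbitrary base change $\Spec(B) \to \Spec(A)$, the flat locus $\Phi_F$ computed in $\St_{\I, k}$ agrees with the flat locus $\Phi_{i_A(F)}$ computed in $\Repk$.

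Next I would invoke \cref{openness_flatness_Rep} to conclude that $\Phi_{i_A(F)}$, and therefore $\Phi_F$, is an open subscheme of $\Spec(A)$. As this holds for all $\Spec(A)$ and all $F$, the standard $t$-structure on $\St_{\I, k}$ universally satisfies openness of flatness. To extract the geometric consequences I would then apply \cref{moduli_flat_objects} and \cref{geometric_heart}; these require $\St_{\I, k}$ to be of finite type, which is guaranteed by \cref{Stokes_moduli} (equivalently \cite[Theorem 10.6.15]{Geometric_Stokes}) identifying $\mathbf{St}_{\I, k}$ with the locally geometric stack $\M_{\St_{\I, k}}$. This yields that $\mathbf{St}_{\I, k}^{[0,0]}$ is a $1$-Artin derived stack locally of finite presentation and that its truncation $\mathbf{St}_{\I, k}^\heartsuit$ is an algebraic stack locally of finite presentation over $k$.

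The main obstacle is the identification of the two flatness notions, which hinges on the stability of Stokes functors under base change by heart modules together with the fact that the restricted $t$-structure has the induced truncations; once these are secured, everything reduces formally to the representation case already treated.
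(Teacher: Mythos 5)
Your proposal is correct and follows essentially the same route as the paper: the paper's proof also reduces to \cref{openness_flatness_Rep}, via \cref{locally_free_valued_Stokes} (flatness of a Stokes family is detected on its values, hence on the underlying representation), which rests on exactly the ingredients you isolate — the restricted $t$-structure of \cref{t_structure_Stokes} and the compatibility of the $\Mod_A$-action with the colimit-preserving inclusion $\St_{\I,A}\hookrightarrow \Rep_A(\I)$. The only cosmetic difference is that the paper re-runs the pullback-square argument with the compact generators of \cref{cpt_gen_Stokes}, whereas you package the same content as an identification of flat loci $\Phi_F = \Phi_{i_A(F)}$ and quote \cref{openness_flatness_Rep} as a black box.
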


\begin{proof}
Thanks to \cref{Stokes_moduli} and \cref{locally_free_valued_Stokes}, same as the proof of \cref{openness_flatness_Rep}.
\end{proof}

\begin{theorem}\label{Hartogs_Stokes}
In the setting of \cref{Stokes_moduli}, the stack $\mathbf{St}^{\heartsuit}_{\I,k}$ satisfies the Hartogs' principle.
\end{theorem}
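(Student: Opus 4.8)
The plan is to mimic the proof of \cref{Hartogs_Rep}, applying the reflexive extension of \cref{Hartogs_loc_free} termwise and then checking that the two Stokes conditions survive. By \cref{def_Hartogs_principle} we may take $X$ integral local regular noetherian of dimension $2$ and $U = X \setminus \{\ast\}$ the complement of its closed point, so that $Z \coloneqq X \setminus U$ has codimension $2$. Exactly as in \cref{Hartogs_Rep}, descent together with \cref{locally_free_valued_Stokes} identifies a map $U \to \mathbf{St}^{\heartsuit}_{\I,k}$ with a Stokes functor $F \colon \I \to \QCoh(U)^\heartsuit$ valued in locally free sheaves of finite rank. I would then set $G \coloneqq \mathrm{H}^0(j_\ast) F \colon \I \to \QCoh(X)^\heartsuit$, applying the functor $\mathrm{H}^0(j_\ast)$ to the whole $\I$-diagram; by \cref{Hartogs_loc_free} this $G$ is valued in locally free sheaves of finite rank, satisfies $j^\ast G \simeq F$, and is the unique extension of $F$ as a representation. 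Everything then reduces to showing that $G$ is again a Stokes functor, i.e. punctually split and cocartesian.

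First I would treat punctual splitting, which is the easy half. Fix $x \in \Pi_\infty(\widetilde{X},P)$. Since $F$ is punctually split, $j_x^\ast F \simeq i_{\I_x,!}(H)$ for some $H \colon \I_x^{set} \to \QCoh(U)^\heartsuit$; because $\I_x^{set}$ is discrete and $\I_x$ is a finite poset, the left Kan extension $i_{\I_x,!}$ is computed by finite direct sums, $(i_{\I_x,!}H)(c) \simeq \bigoplus_{c' \leq c} H(c')$, with structure maps the canonical summand inclusions. As $\mathrm{H}^0(j_\ast)$ is additive it commutes with these finite biproducts and with $j_x^\ast$, so $j_x^\ast G \simeq \mathrm{H}^0(j_\ast)\, i_{\I_x,!}(H) \simeq i_{\I_x,!}\bigl(\mathrm{H}^0(j_\ast) H\bigr)$, which exhibits $j_x^\ast G$ in the essential image of $i_{\I_x,!}$. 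Hence $G$ is punctually split.

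The cocartesian condition is where I expect the main obstacle, because the Beck--Chevalley transformation $\eta_\gamma \colon \gamma_! j_x^\ast \to j_y^\ast$ of \cref{recollection_cocart} involves the left Kan extension $\gamma_!$, a genuine colimit that the left-exact functor $\mathrm{H}^0(j_\ast)$ does \emph{not} preserve, so a priori $\gamma_! j_x^\ast G$ need not even be a vector bundle on $X$. The point that rescues the argument is precisely the punctual splitting established above: since $j_x^\ast G \simeq i_{\I_x,!}(\mathrm{H}^0(j_\ast) H)$, one has $\gamma_! j_x^\ast G \simeq (\gamma \circ i_{\I_x})_!\bigl(\mathrm{H}^0(j_\ast) H\bigr)$, a left Kan extension out of the \emph{discrete} category $\I_x^{set}$, hence again a finite direct sum of vector bundles and compatible with $\mathrm{H}^0(j_\ast)$. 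Thus both $\gamma_! j_x^\ast G$ and $j_y^\ast G$ are the reflexive extensions of $\gamma_! j_x^\ast F$ and $j_y^\ast F$, and naturality of the biproduct computation gives $\eta_\gamma G \simeq \mathrm{H}^0(j_\ast)(\eta_\gamma F)$. Now $\eta_\gamma F$ is an equivalence because $F$ is cocartesian, so the remaining step is a rigidity statement across the codimension-$2$ locus $Z$: a morphism of locally free sheaves on $X$ that is an isomorphism over $U$ is an isomorphism, since its inverse over $U$ extends by \cref{Hartogs_loc_free} (applied to the internal $\mathrm{Hom}$, again locally free) and the two composites extend the identity, hence equal it by uniqueness. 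This forces $\eta_\gamma G$ to be an equivalence, so $G$ is cocartesian and therefore a Stokes functor.

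Finally, uniqueness of the dashed arrow $X \to \mathbf{St}^{\heartsuit}_{\I,k}$ should be inherited for free from the representation case: the inclusion $\St_{\I,k} \hookrightarrow \Repk$ is fully faithful, so the induced map $\mathbf{St}^{\heartsuit}_{\I,k} \to \REP$ is a monomorphism, and any extension of $F$ through $\mathbf{St}^{\heartsuit}_{\I,k}$ is in particular an extension through $\REP$, hence agrees with $G$ by the uniqueness part of \cref{Hartogs_Rep}. This would complete the verification that $\mathbf{St}^{\heartsuit}_{\I,k}$ satisfies the Hartogs' principle.
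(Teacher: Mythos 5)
Your proof is correct, and its skeleton matches the paper's: extend termwise by $\mathrm{H}^0(j_\ast)$, use \cref{Hartogs_Rep} for existence and uniqueness at the level of representations, then check that the extension is still a Stokes functor. Where you genuinely diverge is in that last step. The paper disposes of it in one line by invoking \cref{push_Stokes_heart}: since $j$ is quasi-compact and quasi-separated, $j_\ast$ is colimit-preserving and preserves coconnective objects, so the general change-of-coefficients stability of Stokes functors (imported from \cite{Abstract_Stokes}) shows that $\tau_{\geq 0}\circ j_\ast\circ -$ preserves Stokes functors. You instead verify the two defining conditions by hand: punctual splitting survives because $i_{\I_x,!}$ out of the discrete finite category $\I_x^{set}$ is computed by finite direct sums, which the additive functor $\mathrm{H}^0(j_\ast)$ preserves; for cocartesianness you use the splitting to rewrite $\gamma_!\, j_x^\ast G$ as a left Kan extension out of a discrete category (again finite biproducts), and then a codimension-$2$ rigidity statement for maps of reflexive sheaves upgrades ``isomorphism over $U$'' to ``isomorphism over $X$''. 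Your route is more elementary and self-contained --- it never needs $j_\ast\in\Pr^{\mathrm{L}}$, only additivity of $\mathrm{H}^0(j_\ast)$ and \cref{Hartogs_loc_free} --- but it is specific to fibrations in finite posets, where Kan extensions of split objects reduce to biproducts; the paper's argument is shorter and, as the remark following \cref{good_Stokes} notes, applies verbatim in the general setting of \cite[Theorem 8.1.3]{Geometric_Stokes}. A small bonus of your version: the final rigidity step makes the naturality identification $\eta_\gamma G\simeq \mathrm{H}^0(j_\ast)(\eta_\gamma F)$ dispensable, since it suffices that $\eta_\gamma G$ be a map of locally free sheaves restricting to the equivalence $\eta_\gamma F$ over $U$.
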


\begin{proof}
Let $X = \Spec(A) \in \Aff_k$ be normal integral noetherian and $j\colon U \hookrightarrow X$ the complement of a $0$-dimensional closed subscheme of $X$.
We need to show that every diagram of solid arrows
\[\begin{tikzcd}[sep=small]
	U && {\mathbf{St}^{\heartsuit}_{\I,k}} \\
	\\
	X
	\arrow["F", from=1-1, to=1-3]
	\arrow["j"', hook, from=1-1, to=3-1]
	\arrow[dashed, from=3-1, to=1-3]
\end{tikzcd}\]
By descent, a map $U \to X$ corresponds to a Stokes functor $F\colon \I \to \QCoh(U)^\heartsuit$ that take values in locally free quasi-coherent sheaves of finite rank.
The proof of \cref{Hartogs_Rep} shows that the functor
\[
\mathrm{H}^0(j_\ast ) F \colon \I \to \Mod_A^\heartsuit
\]
\[
c \mapsto \mathrm{H}^0(j_\ast ) (F(c))
\]
is the unique extension of $F$ in $\Rep_{A}(\I)$ that take values in locally free sheaves of finite rank.
We are thus left to check that the above assignment defines a Stokes functor.
Since $j \colon U \to X$ is quasi-compact and quasi-separated, this follows by \cref{push_Stokes_heart}.
\end{proof}

Let $\Spec(\kappa) \to \Spec(k)$ be a closed point with $\kappa$ algebraically closed.
\cref{Hartogs_Stokes} yields the following

\begin{corollary}\label{good_Stokes}
	In the setting of \cref{Stokes_moduli}, the algebraic stack $\mathbf{St}^{\heartsuit}_{\I,k}$ is $\Theta$-reductive and $\mathrm{S}$-complete.
	In particular, any quasi-compact closed substack $\X \subset \mathbf{St}^{\heartsuit}_{\I,k}$ admits a separated good moduli space $X$ whose $\kappa$-points parametrize pseudo-perfect semisimple objects of $\St^{\heartsuit}_{\I,k}$ lying over $\X$.
	Moreover $X$ comes with a natural derived enhancement if $\X$ is the truncation of a derived substack of $\mathbf{St}^{[0,0]}_{\I,k}$.
\end{corollary}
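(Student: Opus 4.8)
The plan is to follow the proof of \cref{good_Rep} almost verbatim, substituting the Hartogs' principle for representations by its Stokes counterpart. First I would invoke \cref{Hartogs_Stokes}, which asserts that $\mathbf{St}^{\heartsuit}_{\I,k}$ satisfies the Hartogs' principle, together with \cref{Hartogs_implies_existence} to conclude that $\mathbf{St}^{\heartsuit}_{\I,k}$ is $\Theta$-reductive and $\mathrm{S}$-complete. These are the two valuative conditions appearing in the existence criterion \cref{existence_good_separated}, and both are stable under passing to closed substacks.

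Next I would verify the remaining hypotheses of \cref{existence_good_separated}, namely affine stabilizers and separated diagonal, by checking that $\mathbf{St}^{\heartsuit}_{\I,k}$ has affine diagonal via \cref{affine_diagonal}. For that lemma to apply, the coefficient category $\St_{\I,k}$ must be of finite type, carry an admissible $t$-structure, and universally satisfy openness of flatness. The $t$-structure is the restricted standard one of \cref{t_structure_Stokes}; openness of flatness is exactly \cref{openness_flatness_Stokes}; and the finite type property follows because the inclusion $i\colon\St_{\I,k}\hookrightarrow\Repk$ is closed under limits and colimits and admits both adjoints (as established above), so that both $i$ and its left adjoint preserve colimits and exhibit $\St_{\I,k}$ as a retract of $\Repk$ in $\Prlo_k$; since $\Repk$ is of finite type and compact objects are closed under retracts, $\St_{\I,k}$ is itself of finite type. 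With affine diagonal in hand, and since $\Theta$-reductiveness, $\mathrm{S}$-completeness, and having affine diagonal all pass to closed substacks, any quasi-compact closed substack $\X\subset\mathbf{St}^{\heartsuit}_{\I,k}$ is of finite presentation and satisfies the hypotheses of \cref{existence_good_separated}, hence admits a separated good moduli space $X$.

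For the identification of the $\kappa$-points I would combine \cref{good_moduli_properties}-(4), which identifies $\kappa$-points of $X$ with $\kappa$-points of $\X$ up to closure equivalence, with \cref{closed_point_is_semisimple}, which identifies the closed $\kappa$-points of $\M^{\heartsuit}_{\St_{\I,k}}\simeq\mathbf{St}^{\heartsuit}_{\I,k}$ with pseudo-perfect semisimple objects of $\St^{\heartsuit}_{\I,\kappa}$; restricting to those lying over $\X$ gives the stated description. Finally, the derived enhancement is supplied by \cref{derived_good}: when $\X$ is the truncation of a derived substack of $\mathbf{St}^{[0,0]}_{\I,k}$, the good moduli space $X$ of the classical truncation lifts to a derived good moduli space.

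Since every nontrivial input has already been established, most importantly \cref{Hartogs_Stokes} and \cref{openness_flatness_Stokes}, I do not expect a genuine obstacle: the argument is formal once those are in place. The only point that requires a small verification is the finite type property of $\St_{\I,k}$, needed so that \cref{affine_diagonal} and \cref{closed_point_is_semisimple} apply to $\St_{\I,k}$ rather than merely to $\Repk$; this is exactly where the retract argument above enters, and it is the step I would be most careful about when writing out the details.
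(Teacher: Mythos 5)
Your proposal is correct and follows essentially the same route as the paper's proof: Hartogs via \cref{Hartogs_Stokes} plus \cref{Hartogs_implies_existence} for $\Theta$-reductiveness and $\mathrm{S}$-completeness, \cref{affine_diagonal} for the diagonal, \cref{existence_good_separated} on closed substacks, and \cref{good_moduli_properties}-(4), \cref{closed_point_is_semisimple}, \cref{derived_good} for the points and the derived enhancement. The only difference is that you spell out the finite-type property of $\St_{\I,k}$ via the retract argument, which the paper leaves implicit (it is subsumed in the citation of \cref{Stokes_moduli}); your justification is correct and arguably a useful addition.
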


\begin{proof}
By \cref{Hartogs_Stokes} and \cref{Hartogs_implies_existence}, $\mathbf{St}_{\I, k}^{\heartsuit}$ is $\Theta$-reductive and $\mathrm{S}$-complete.
It also has affine diagonal by \cref{affine_diagonal}.
Since these properties are stable under passing to closed substacks, $\X$ satisfies the assumptions of \cref{existence_good_separated}.
The characterization of $\kappa$-points of $X$ follows from \cref{good_moduli_properties}-(4) and \cref{closed_point_is_semisimple}.
The natural derived enhancement are provided by \cref{derived_good}.
\end{proof}

\begin{remark}
The results of \cref{Hartogs_Stokes} and \cref{good_Stokes} holds true (with the same proofs) in the more general setting of \cite[Theorem 8.1.3]{Geometric_Stokes}.
\end{remark}

\subsection{Good moduli for Stokes functors}
	In this subsection we show that good moduli space exists for Stokes data when bounds are imposed on the ranks of the representations.
	We deduce from this that Stokes data admit good moduli spaces.\\
	We will work in the setting of good sheaves of (ramified) irregular values, although all the statements remains true with same proofs in the setting of \cite[Theorem 8.1.3]{Geometric_Stokes}. 

\begin{construction}[{Analogue of \cref{fixed_rank_Rep}}]\label{fixed_rank_Stokes}
	Let $(X,D)$ be a strict normal crossing pair.
	Suppose that $X$ admits a smooth compactification.
	Let $\mathscr{I} \subset \mathscr{IV}_d(X,D)$ be a sheaf of irregular values for some $d\geq 1$.
	Let $(\widetilde{X},P)$ be a finite subanalytic stratification for which $\pi^{\ast}\mathscr{I}$ is constructible and $\I \to \Pi_\infty(\widetilde{X},P)$ the associated cocartesian fibration.
	Let $\left\lbrace c_a \right\rbrace_{a \in I}$ be a finite set of representative of the equivalence classes of $\I$.
	It follows from \cref{locally_free_valued_Stokes} that there is a pullback square
\[\begin{tikzcd}[sep=small]
	{\mathbf{St}_{\mathcal{I}, k}^{[0,0]}} && {\mathbf{St}_{\mathcal{I},k}} \\
	\\
	{\prod_{a \in I}\mathbf{Vect}_k} && {\prod_{a \in I}\mathbf{Perf}_k}
	\arrow[from=1-1, to=1-3]
	\arrow[from=1-1, to=3-1]
	\arrow[from=1-3, to=3-3]
	\arrow[from=3-1, to=3-3]
\end{tikzcd}\]
where the right vertical map is given by the product of evaluations at any $c_a$, $a \in I$, and $\mathbf{Vect}_k \coloneqq \bigsqcup_{n \in \N} \mathrm{B}\mathbf{GL}_n$.
	Choose a set of integers $\underline{r} = \left\lbrace r_a \right\rbrace_{a \in I} \subset \N$ and define the derived stack $\mathbf{St}_{\mathcal{I}, k}^{\underline{r}}$ via the pullback square
\[\begin{tikzcd}[sep=small]
	{\mathbf{St}_{\mathcal{I}, k}^{\underline{r}}} && {\mathbf{St}_{\mathcal{I}, k}^{[0,0]}} \\
	\\
	{\prod_{a \in I} \mathrm{B}\mathbf{GL}_{r_a}} && {\prod_{a \in I}\mathbf{Vect}_k}
	\arrow[from=1-1, to=1-3]
	\arrow[from=1-1, to=3-1]
	\arrow[from=1-3, to=3-3]
	\arrow[from=3-1, to=3-3]
\end{tikzcd}\]
	The stack $\mathbf{St}_{\mathcal{I}, k}^{\underline{r}}$ does not depend on the choice of the $c_a$'s, but only on the choice of the set of integers $\underline{r}$.
\end{construction}

\begin{proposition}\label{quasi_compact_Stokes}
In the setting of Construction \ref{fixed_rank_Stokes}, the derived stack $\mathbf{St}_{\mathcal{I}, k}^{\underline{r}}$ is a quasi-compact open and closed substack of $\mathbf{St}_{\mathcal{I}, k}^{[0,0]}$.
In particular, it is a $1$-Artin stack of finite presentation.
\end{proposition}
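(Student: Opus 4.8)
The plan is to run the proof of \cref{quasi_compact_Rep} essentially verbatim, substituting \cref{cpt_gen_Stokes} for \cref{cpt_gen_Rep} at the one place where the representation-theoretic input enters. First I would observe that since $I$ is finite, the morphism $\prod_{a \in I} \mathrm{B}\mathbf{GL}_{r_a} \to \prod_{a \in I}\mathbf{Vect}_k$ is quasi-compact and both open and closed, as each factor $\mathrm{B}\mathbf{GL}_{r_a} \hookrightarrow \mathbf{Vect}_k = \bigsqcup_{n} \mathrm{B}\mathbf{GL}_n$ is the inclusion of a connected component. Base-changing along the defining pullback square of \cref{fixed_rank_Stokes}, the structural map $\mathbf{St}_{\mathcal{I}, k}^{\underline{r}} \to \mathbf{St}_{\mathcal{I}, k}^{[0,0]}$ inherits these properties, so $\mathbf{St}_{\mathcal{I}, k}^{\underline{r}}$ is an open and closed substack; combined with \cref{openness_flatness_Stokes} this already exhibits it as a $1$-Artin stack locally of finite presentation over $k$.

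It remains to prove quasi-compactness, and here I would introduce the function $\nu \colon \ZZ \to \N$ supported at $0$ with value $\sum_{a \in I} r_a$, exactly as in \cref{quasi_compact_Rep}. The crux is to produce a compact generator $E$ of $\St_{\I, k}$ whose evaluation, in the sense of \cref{nu_substacks}, computes the direct sum of the evaluations at the $c_a$. This is precisely the content of \cref{cpt_gen_Stokes}: taking $E = \bigoplus_{a \in I} i^{\mathrm{L}}(\ev_{c_a, !})$, the adjunction $i^{\mathrm{L}} \dashv i$ gives $\Hom_{\St_{\I, k}}(i^{\mathrm{L}}(\ev_{c_a,!}), F) \simeq F(c_a)$, so evaluation at $E$ of a family $F$ recovers $\bigoplus_a F(c_a)$. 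With this generator, the quasi-compact open substack $\mathbf{St}^{\nu}_{\mathcal{I}, k}$ of \cref{quasi_compact_substacks} contains $\mathbf{St}_{\mathcal{I}, k}^{\underline{r}}$ as an open substack, since the rank condition cut out by $\underline{r}$ bounds the fibral dimension of $F(E)$ at $0$ by $\nu(0)$ and forces vanishing in other degrees by flatness.

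Finally I would assemble the diagram of pullback squares exactly as in the proof of \cref{quasi_compact_Rep}, factoring $\mathbf{St}_{\mathcal{I}, k}^{\underline{r}}$ through $\mathbf{St}^{\nu}_{\mathcal{I}, k} \to \mathbf{St}_{\mathcal{I}, k}$ and through $\prod_{a \in I} \mathrm{B}\mathbf{GL}_{r_a} \to \prod_{a \in I}\mathbf{Perf}_k$, so that the horizontal maps are quasi-compact. Since $\mathbf{St}^{\nu}_{\mathcal{I}, k}$ is quasi-compact by \cref{quasi_compact_substacks} and $\mathbf{St}_{\mathcal{I}, k}^{\underline{r}} \to \mathbf{St}^{\nu}_{\mathcal{I}, k}$ is a quasi-compact immersion, quasi-compactness of $\mathbf{St}_{\mathcal{I}, k}^{\underline{r}}$ follows.

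The main obstacle, and really the only non-formal point, is verifying that the compact generator furnished by \cref{cpt_gen_Stokes} has the property that evaluation at it equals the direct sum of the pointwise evaluations at the $c_a$; everything else is a direct transcription of \cref{quasi_compact_Rep}. Once this compatibility is in hand, the $\nu$-bound genuinely controls the ranks of a Stokes family and the quasi-compactness argument goes through unchanged.
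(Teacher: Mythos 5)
Your proposal is correct and is exactly the argument the paper intends: the paper's own proof simply says ``Same as the proof of \cref{quasi_compact_Rep},'' and your transcription — using the pullback square of \cref{fixed_rank_Stokes}, the compact generator $\bigoplus_{a}i^{\mathrm{L}}(\ev_{c_a,!})$ from \cref{cpt_gen_Stokes} in place of \cref{cpt_gen_Rep}, and the $\nu$-substack of \cref{quasi_compact_substacks} — is precisely that adaptation.
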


\begin{proof}
Same as the proof of \cref{quasi_compact_Rep}.
\end{proof}

\begin{corollary}\label{good_Stokes_rank}
	In the setting of Construction \ref{fixed_rank_Stokes}, the classical truncation $t_0 \mathbf{St}_{\mathcal{I}, k}^{\underline{r}}$ admits a separated good moduli space $t_0 St_{\mathcal{I}, k}^{\underline{r}}$.
	Moreover $t_0 St_{\mathcal{I}, k}^{\underline{r}}$ admits a derived enhancement $St_{\mathcal{I}, k}^{\underline{r}}$ which is a derived good moduli space for $\mathbf{St}_{\mathcal{I}, k}^{\underline{r}}$.
\end{corollary}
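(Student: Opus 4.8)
The plan is to reproduce, \emph{mutatis mutandis}, the two-line argument that establishes the analogous statement \cref{good_Rep_rank} for representations: the role played there by \cref{quasi_compact_Rep} is now taken by \cref{quasi_compact_Stokes}, and the role of \cref{good_Rep} by \cref{good_Stokes}. The entire task reduces to verifying that $t_0 \mathbf{St}_{\mathcal{I}, k}^{\underline{r}}$ meets the hypotheses of the existence statement packaged in \cref{good_Stokes}; once this is checked, both assertions of the corollary drop out at once.

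Concretely, I would first invoke \cref{quasi_compact_Stokes} to record that $\mathbf{St}_{\mathcal{I}, k}^{\underline{r}}$ is a quasi-compact open and closed substack of $\mathbf{St}_{\mathcal{I}, k}^{[0,0]}$. Passing to classical truncations, $t_0 \mathbf{St}_{\mathcal{I}, k}^{\underline{r}}$ is then a quasi-compact open and closed---hence in particular \emph{closed}---substack of $\mathbf{St}^{\heartsuit}_{\I, k}$. This is exactly the class of substacks covered by \cref{good_Stokes}, so $t_0 \mathbf{St}_{\mathcal{I}, k}^{\underline{r}}$ admits a separated good moduli space $t_0 St_{\mathcal{I}, k}^{\underline{r}}$. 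For the derived enhancement I would observe that $\mathbf{St}_{\mathcal{I}, k}^{\underline{r}}$ is, by \cref{fixed_rank_Stokes}, a derived substack of $\mathbf{St}_{\mathcal{I}, k}^{[0,0]} = \mathbf{St}^{[0,0]}_{\I, k}$ whose classical truncation is $t_0 \mathbf{St}_{\mathcal{I}, k}^{\underline{r}}$; this is precisely the hypothesis under which the last clause of \cref{good_Stokes} produces a derived good moduli space $St_{\mathcal{I}, k}^{\underline{r}}$ for $\mathbf{St}_{\mathcal{I}, k}^{\underline{r}}$ refining $t_0 St_{\mathcal{I}, k}^{\underline{r}}$.

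Since the substantive inputs---the Hartogs' principle for Stokes functors (\cref{Hartogs_Stokes}) feeding $\Theta$-reductiveness and $\mathrm{S}$-completeness, the affine diagonal together with local finite presentation (\cref{affine_diagonal}, \cref{openness_flatness_Stokes}), and the quasi-compact cut-out (\cref{quasi_compact_Stokes})---are all already in place, there is no genuine obstacle at this final step. The only point requiring any care is the elementary observation that an open-and-closed immersion is in particular a closed immersion, so that the quasi-compact \emph{closed} substack hypothesis of \cref{good_Stokes} is met after truncation; everything else is a direct application.
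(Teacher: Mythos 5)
Your proposal is correct and follows exactly the paper's own (one-line) argument: apply \cref{quasi_compact_Stokes} to see that $\mathbf{St}_{\mathcal{I},k}^{\underline{r}}$ is a quasi-compact open and closed (hence closed) substack, then invoke \cref{good_Stokes} for both the existence of the good moduli space and its derived enhancement. You have merely spelled out the details the paper leaves implicit.
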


\begin{proof}
	It follows directly from \cref{quasi_compact_Stokes} that we can apply \cref{good_Stokes}.
\end{proof}

	Let $\Spec(\kappa) \to \Spec(k)$ be a closed point with $\kappa$ algebraically closed.
	We have the following

\begin{theorem}\label{good_moduli_Stokes_theorem}
	In the setting of \cref{Stokes_moduli}, the algebraic stack $\mathbf{St}^{\heartsuit}_{\I,k}$ admits a separated good moduli space $\underline{\mathrm{St}}^{\heartsuit}_{\I,k}$ whose $\kappa$-points parametrize pseudo-perfect semisimple objects of $\St_{\I, \kappa}^\heartsuit$.
	Moreover $\underline{\mathrm{St}}^{\heartsuit}_{\I,k}$ admits a derived enhancement $\underline{\mathrm{St}}^{[0,0]}_{\I,k}$ which is a derived good moduli space for $\mathbf{St}^{[0,0]}_{\I,k}$.
\end{theorem}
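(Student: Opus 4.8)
The plan is to reduce to the fixed-rank case already settled in \cref{good_Stokes_rank}, mirroring verbatim the proof of \cref{good_Rep_theorem}. First I would record the disjoint-union decomposition
\[
\mathbf{St}^{[0,0]}_{\I,k} \simeq \bigsqcup_{\underline{r} \in \N^I} \mathbf{St}_{\mathcal{I}, k}^{\underline{r}}
\]
indexed by the rank vectors $\underline{r} = \left\lbrace r_a \right\rbrace_{a \in I}$. Indeed, by \cref{locally_free_valued_Stokes} a $\tau$-flat pseudo-perfect Stokes functor over a discrete ring takes values in locally free modules of finite rank, hence has a well-defined rank $r_a$ at each representative $c_a$; and by \cref{quasi_compact_Stokes} every $\mathbf{St}_{\mathcal{I}, k}^{\underline{r}}$ is open and closed, so these strata are a union of connected components that exhaust $\mathbf{St}^{[0,0]}_{\I,k}$. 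Passing to classical truncations yields the analogous decomposition of $\mathbf{St}^{\heartsuit}_{\I,k}$.

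Next I would apply \cref{good_Stokes_rank} to each factor: for every $\underline{r}$ the quasi-compact stack $t_0\mathbf{St}_{\mathcal{I}, k}^{\underline{r}}$ admits a separated good moduli space $t_0 St_{\mathcal{I}, k}^{\underline{r}}$, with a derived enhancement $St_{\mathcal{I}, k}^{\underline{r}}$ that is a derived good moduli space. Since the two defining conditions of a good moduli space ($t$-exactness of $q_\ast$ and $q_\ast\OO \simeq \OO$) are local on the target, the formation of good moduli spaces commutes with disjoint unions. Therefore the assembled morphism
\[
q \colon \mathbf{St}^{[0,0]}_{\I,k} \simeq \bigsqcup_{\underline{r} \in \N^I} \mathbf{St}_{\mathcal{I}, k}^{\underline{r}} \longrightarrow \bigsqcup_{\underline{r} \in \N^I} St_{\mathcal{I}, k}^{\underline{r}}
\]
is a derived good moduli space; I denote its target by $\underline{\mathrm{St}}^{[0,0]}_{\I,k}$. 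Its classical truncation is then a separated good moduli space $\underline{\mathrm{St}}^{\heartsuit}_{\I,k}$ for $\mathbf{St}^{\heartsuit}_{\I,k}$, and the derived-enhancement statement follows from \cref{derived_good}.

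Finally, to identify the $\kappa$-points I would invoke \cref{good_moduli_properties}-(4), by which the $\kappa$-points of the good moduli space are the $\kappa$-points of the stack up to closure equivalence, together with \cref{closed_point_is_semisimple}, which identifies the closed $\kappa$-points with the pseudo-perfect semisimple objects of $\St_{\I, \kappa}^\heartsuit$ (here using that $\St_{\I, k}$ is of finite type and carries the admissible $t$-structure $\taust$ satisfying openness of flatness, by \cref{openness_flatness_Stokes}). This is exactly the characterization already recorded in \cref{good_Stokes}. Since every substantial input — the Hartogs' principle yielding $\Theta$-reductiveness and $\mathrm{S}$-completeness (\cref{Hartogs_Stokes}), the quasi-compactness and finite presentation of the fixed-rank strata (\cref{quasi_compact_Stokes}), and the affine diagonal (\cref{affine_diagonal}) — is established beforehand, the argument here is essentially formal. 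The only point demanding a little care is the decomposition above, namely that the ranks $r_a$ are locally constant on the flat pseudo-perfect locus; this is precisely the openness and closedness asserted in \cref{quasi_compact_Stokes}, so I anticipate no genuine obstacle.
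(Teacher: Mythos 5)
Your argument is exactly the paper's: the paper proves this theorem by citing \cref{good_Stokes_rank} and repeating the proof of \cref{good_Rep_theorem}, which is precisely the decomposition $\mathbf{St}^{[0,0]}_{\I,k} \simeq \bigsqcup_{\underline{r}} \mathbf{St}_{\mathcal{I},k}^{\underline{r}}$ followed by assembling the fixed-rank good moduli spaces and invoking \cref{derived_good}, \cref{good_moduli_properties}-(4) and \cref{closed_point_is_semisimple}. Your write-up is correct and, if anything, spells out more carefully than the paper does why the rank decomposition is by open and closed substacks.
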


\begin{proof}
	By \cref{good_Stokes_rank}, same as the proof of \cref{good_Rep_theorem}.
\end{proof}

\bibliographystyle{alpha}
\bibliography{good_bib_arxiv}

\newcommand{\etalchar}[1]{$^{#1}$}
\begin{thebibliography}{BDN{\etalchar{+}}25}

\bibitem[AG14]{AG}
Benjamin Antieau and David Gepner.
\newblock Brauer groups and {\'e}tale cohomology in derived algebraic geometry.
\newblock {\em Geom. Topol.}, 18(2):1149--1244, 2014.

\bibitem[AHLH23]{AHLH}
Jarod Alper, Daniel Halpern-Leistner, and Jochen Heinloth.
\newblock Existence of moduli spaces for algebraic stacks.
\newblock {\em Inventiones mathematicae}, 234(3):949–1038, August 2023.

\bibitem[AHPS23]{derived_good}
Eric Ahlqvist, Jeroen Hekking, Michele Pernice, and Michail Savvas.
\newblock Good {Moduli} {Spaces} in {Derived} {Algebraic} {Geometry}.
\newblock Preprint, {arXiv}:2309.16574 [math.{AG}] (2023), 2023.

\bibitem[AHR25]{AHR}
Jarod Alper, Jack Hall, and David Rydh.
\newblock The \'etale local structure of algebraic stacks.
\newblock Preprint, {arXiv}:1912.06162 [math.{AG}] (2025), 2025.

\bibitem[Alp13]{Alp}
Jarod Alper.
\newblock Good moduli spaces for {Artin} stacks.
\newblock {\em Ann. Inst. Fourier}, 63(6):2349--2402, 2013.

\bibitem[AOV08]{AOV}
Dan Abramovich, Martin Olsson, and Angelo Vistoli.
\newblock Tame stacks in positive characteristic.
\newblock {\em Ann. Inst. Fourier}, 58(4):1057--1091, 2008.

\bibitem[BB04]{BB}
Olivier Biquard and Philip Boalch.
\newblock Wild non-abelian {Hodge} theory on curves.
\newblock {\em Compos. Math.}, 140(1):179--204, 2004.

\bibitem[BDN{\etalchar{+}}25]{BDNIKP}
Chenjing Bu, Ben Davison, Andr{\'e}s~Ib{\'a}{\~n}ez N{\'u}{\~n}ez, Tasuki
  Kinjo, and Tudor P{\u{a}}durariu.
\newblock Cohomology of symmetric stacks.
\newblock Preprint, {arXiv}:2502.04253 [math.{AG}] (2025), 2025.

\bibitem[Boa13]{Boalch_HDR}
Philip Boalch.
\newblock Geometry of moduli spaces of meromorphic connections on curves,
  {Stokes} data, wild nonabelian {Hodge} theory, hyperkahler manifolds,
  isomonodromic deformations, {Painleve} equations, and relations to {Lie}
  theory.
\newblock Preprint, {arXiv}:1305.6593 [math.{HO}] (2013), 2013.

\bibitem[Boa14]{Boalch1}
P.~P. Boalch.
\newblock Geometry and braiding of {Stokes} data; fission and wild character
  varieties.
\newblock {\em Ann. Math. (2)}, 179(1):301--365, 2014.

\bibitem[Boa21]{Boalch2}
P.~P. Boalch.
\newblock Topology of the {Stokes} phenomenon.
\newblock In {\em Integrability, quantization, and geometry I. Integrable
  systems. Dedicated to the memory of Boris Dubrovin 1950--2019}, pages
  55--100. Providence, RI: American Mathematical Society (AMS), 2021.

\bibitem[BY15]{Boalch3}
Philip Boalch and Daisuke Yamakawa.
\newblock Twisted wild character varieties.
\newblock Preprint, {arXiv}:1512.08091 [math.{AG}] (2015), 2015.

\bibitem[CJ24]{CJ}
Dustin Clausen and Mikala~{\O}rsnes Jansen.
\newblock The reductive {Borel}-{Serre} compactification as a model for
  unstable algebraic {K}-theory.
\newblock {\em Sel. Math., New Ser.}, 30(1):93, 2024.
\newblock Id/No 10.

\bibitem[Dav24]{Davison}
Ben Davison.
\newblock Purity and 2-{Calabi}-{Yau} categories.
\newblock {\em Invent. Math.}, 238(1):69--173, 2024.

\bibitem[DDP18]{DDP}
Duiliu-Emanuel Diaconescu, Ron Donagi, and Tony Pantev.
\newblock {BPS} states, torus links and wild character varieties.
\newblock {\em Commun. Math. Phys.}, 359(3):1027--1078, 2018.

\bibitem[Del70]{Deligne}
Pierre Deligne.
\newblock {\em Equations diff{\'e}rentielles {\`a} points singuliers
  r{\'e}guliers}, volume 163 of {\em Lect. Notes Math.}
\newblock Springer, Cham, 1970.

\bibitem[DHM22]{DHM}
Ben Davison, Lucien Hennecart, and Sebastian~Schlegel Mejia.
\newblock {BPS} {Lie} algebras for totally negative 2-{Calabi}-{Yau} categories
  and nonabelian {Hodge} theory for stacks.
\newblock Preprint, {arXiv}:2212.07668 [math.{RT}] (2022), 2022.

\bibitem[DK16]{DK}
Andrea D'Agnolo and Masaki Kashiwara.
\newblock Riemann-hilbert correspondence for holonomic
  {{\(\mathcal{D}\)}}-modules.
\newblock {\em Publ. Math., Inst. Hautes {\'E}tud. Sci.}, 123:69--197, 2016.

\bibitem[FHLM25]{FHLM}
Andres Fernandez~Herrero, Emmett Lennen, and Svetlana Makarova.
\newblock Moduli of objects in finite length abelian categories.
\newblock Preprint, {arXiv}:2305.10543 [math.{AG}] (2025), 2025.

\bibitem[Har80]{Hartshorne}
Robin Hartshorne.
\newblock Stable reflexive sheaves.
\newblock {\em Math. Ann.}, 254:121--176, 1980.

\bibitem[Hen24]{Hennecart}
Lucien Hennecart.
\newblock Cohomological integrality for symmetric quotient stacks.
\newblock Preprint, {arXiv}:2408.15786 [math.{AG}] (2024), 2024.

\bibitem[HMW19]{HMW}
Tam{\'a}s Hausel, Martin Mereb, and Michael~Lennox Wong.
\newblock Arithmetic and representation theory of wild character varieties.
\newblock {\em J. Eur. Math. Soc. (JEMS)}, 21(10):2995--3052, 2019.

\bibitem[HPT24]{exodromyconicality}
Peter~J. Haine, Mauro Porta, and Jean-Baptiste Teyssier.
\newblock Exodromy beyond conicality.
\newblock Preprint, {arXiv}:2401.12825 [math.{AT}] (2024), 2024.

\bibitem[Kas84]{Kashiwara}
Masaki Kashiwara.
\newblock The {Riemann}-{Hilbert} problem for holonomic systems.
\newblock {\em Publ. Res. Inst. Math. Sci.}, 20:319--365, 1984.

\bibitem[KM97]{KM}
Se{\'a}n Keel and Shigefumi Mori.
\newblock Quotients by groupoids.
\newblock {\em Ann. Math. (2)}, 145(1):193--213, 1997.

\bibitem[Lam25]{good_Perv}
Enrico Lampetti.
\newblock Good moduli for moduli of objects.
\newblock Available at the authors' webpage
  \url{https://sites.google.com/view/enricolampettimath}, 2025.

\bibitem[Lur09]{HTT}
Jacob Lurie.
\newblock {\em Higher topos theory}, volume 170 of {\em Ann. Math. Stud.}
\newblock Princeton, NJ: Princeton University Press, 2009.

\bibitem[Lur17]{HA}
Jacob Lurie.
\newblock Higher algebra.
\newblock Available at the authors' webpage
  \url{https://www.math.ias.edu/~lurie/}, 2017.

\bibitem[Lur18]{SAG}
Jacob Lurie.
\newblock Spectral algebraic geometry.
\newblock Available at the authors' webpage
  \url{https://www.math.ias.edu/~lurie/}, 2018.

\bibitem[Lur25]{kerodon}
Jacob Lurie.
\newblock Kerodon.
\newblock \url{https://kerodon.net}, 2025.

\bibitem[MFK94]{GIT}
D.~Mumford, J.~Fogarty, and F.~Kirwan.
\newblock {\em Geometric invariant theory.}, volume~34 of {\em Ergeb. Math.
  Grenzgeb.}
\newblock Berlin: Springer-Verlag, 3rd enl. ed. edition, 1994.

\bibitem[Moc11a]{Mochizuki_Wild}
Takuro Mochizuki.
\newblock {\em Wild harmonic bundles and wild pure twistor {{\(D\)}}-modules},
  volume 340 of {\em Ast{\'e}risque}.
\newblock Paris: Soci{\'e}t{\'e} Math{\'e}matique de France (SMF), 2011.

\bibitem[Moc11b]{Mochizuki}
Takuro Mochizuki.
\newblock {\em Wild harmonic bundles and wild pure twistor {{\(D\)}}-modules},
  volume 340 of {\em Ast{\'e}risque}.
\newblock Paris: Soci{\'e}t{\'e} Math{\'e}matique de France (SMF), 2011.

\bibitem[Por25]{HDR}
Mauro Porta.
\newblock {\em Derived methods in moduli theory}.
\newblock Habilitation {\`a} diriger des recherches, {Universit{\'e} de
  Strasbourg}, 2025.

\bibitem[PT22]{Exodromy}
Mauro Porta and Jean-Baptiste Teyssier.
\newblock Topological exodromy with coefficients.
\newblock Preprint, {arXiv}:2211.05004 [math.{AT}] (2022), 2022.

\bibitem[PT24]{Abstract_Stokes}
Mauro Porta and Jean-Baptiste Teyssier.
\newblock Homotopy theory of {Stokes} data.
\newblock Preprint, {arXiv}:2401.12335 [math.{AG}] (2024), 2024.

\bibitem[PT25]{Geometric_Stokes}
Mauro Porta and Jean-Baptiste Teyssier.
\newblock The derived moduli of {Stokes} data.
\newblock Preprint, {arXiv}:2504.05360 [math.{AG}] (2025), 2025.

\bibitem[Sab13]{Sabbah}
Claude Sabbah.
\newblock {\em Introduction to {Stokes} structures}, volume 2060 of {\em Lect.
  Notes Math.}
\newblock Berlin: Springer, 2013.

\bibitem[SS03]{SS}
Stefan Schwede and Brooke Shipley.
\newblock Stable model categories are categories of modules.
\newblock {\em Topology}, 42(1):103--153, 2003.

\bibitem[Tre09]{Treumann}
David Treumann.
\newblock Exit paths and constructible stacks.
\newblock {\em Compos. Math.}, 145(6):1504--1532, 2009.

\bibitem[TV07]{TV}
Bertrand To{\"e}n and Michel Vaqui{\'e}.
\newblock Moduli of objects in dg-categories.
\newblock {\em Ann. Sci. {\'E}c. Norm. Sup{\'e}r. (4)}, 40(3):387--444, 2007.

\bibitem[TV08]{HAG-II}
Bertrand To{\"e}n and Gabriele Vezzosi.
\newblock {\em Homotopical algebraic geometry. {II}: {Geometric} stacks and
  applications}, volume 902 of {\em Mem. Am. Math. Soc.}
\newblock Providence, RI: American Mathematical Society (AMS), 2008.

\end{thebibliography}

\end{document}